\definecolor{brown}{rgb}{.6,0,0}
\newcommand{\NN}{\mathbb{N}}
\newcommand{\fS}{\mathfrak{S}}
\newcommand{\blue}{\color{\blue}}
\DeclareMathOperator{\sgn}{sgn}
\DeclareMathOperator{\res}{Res}
\DeclareMathOperator{\ind}{Ind}
\DeclareMathOperator{\Soc}{Soc}
\DeclareMathOperator{\Rad}{Rad}
\DeclareMathOperator{\Hd}{Hd}
\DeclareMathOperator{\Ext}{Ext}
\renewcommand{\leq}{\leqslant}
\renewcommand{\geq}{\geqslant}
\renewcommand{\unrhd}{\trianglerighteqslant}
\tikzset{
    gt/.style={
        -,
        postaction={decorate,-},
        decoration={
            markings,
            mark=at position .5 with {
                \arrow{>[width=5pt]}
            }
        }
    }
}
\tikzset{
    lt/.style={
        -,
        postaction={decorate,-},
        decoration={
            markings,
            mark=at position .5 with {
                \arrow{<[width=5pt]}
            }
        }
    }
}
\tikzset{
  mat/.style={                 
    matrix of nodes,           
    row sep=2cm,               
    column sep=2cm,            
    nodes=dot,                 
    nodes in empty cells=false 
  },
  dot/.style={    
    fill,         
    circle,       
    inner sep=0pt 
  },
  con/.style={                              
    postaction={decorate},                  
    decoration={                            
      markings,                             
      mark=at position 0.9 with {\arrow{>}} 
    }
  },
  lab/.style={ 
    midway,    
    auto       
  }
}
\begin{document}

\swapnumbers
\theoremstyle{definition}
\newtheorem{defi}{Definition}[section]
\newtheorem{rem}[defi]{Remark}
\newtheorem{ques}[defi]{Question}
\newtheorem{expl}[defi]{Example}
\newtheorem{conj}[defi]{Conjecture}
\newtheorem{claim}[defi]{Claim}
\newtheorem{nota}[defi]{Notation}
\newtheorem{noth}[defi]{}
\newtheorem{hypo}[defi]{Hypotheses}

\theoremstyle{plain}
\newtheorem{prop}[defi]{Proposition}
\newtheorem{lemma}[defi]{Lemma}
\newtheorem{cor}[defi]{Corollary}
\newtheorem{thm}[defi]{Theorem}

\renewcommand{\proofname}{\textsl{\textbf{Proof}}}

\setcounter{MaxMatrixCols}{20}

\begin{center}
{\bf\Large On  Ext-Quivers of Blocks of Weight Two\\ for Symmetric Groups}

Susanne Danz and Karin Erdmann
\medskip

\today
\begin{abstract}
\noindent
In this paper we investigate blocks of symmetric groups of weight 2 over fields of odd characteristic $p$.
We develop an algorithm that relates the quivers of two such blocks forming  a $(2:1)$-pair,  as introduced by Scopes. 
We then apply the algorithm to blocks whose $p$-cores are hook partitions, in order to explicitly determine the quivers of these blocks.
As a consequence we conclude that two $p$-blocks $B_1$ and $B_2$ of symmetric groups of weight $2$ whose $p$-cores are hook partitions 
are Morita equivalent if and only if $B_2$ or its conjugate block is in the same Scopes class as $B_1$.

\smallskip

\noindent
{\bf Keywords:} symmetric group, Ext-quiver, Specht module, hook partition, Morita equivalence

\smallskip
\noindent
{\bf MR Subject Classification:} primary 20C20, 20C30; secondary 16D90
\end{abstract}

\end{center}

\section{Introduction}\label{sec intro}

In this paper, we study $p$-blocks of symmetric groups  $\fS_n$ over $F$, where $F$ is an algebraically closed field of characteristic $p>0$. 
As is well known, each such block is 
parametrized by its ($p$-)weight $w\geq 0$ and its ($p$-)core, the latter being  a partition 
of $n-pw$. 
It was proved by Scopes in \cite{Scopes1991} that there are only finitely many Morita equivalence classes of $p$-blocks for symmetric groups with
any given weight. 
More precisely, given a non-negative integer $w$, there is  a minimal list of  $p$-blocks of symmetric groups of weight $w$,
described in terms  of $p$-cores, such that every block of weight $w$ is Morita equivalent to one
of the blocks in this list; see \cite[3.10]{Richards1996}. This list has 
size 
$\frac{1}{2p}{2p \choose p-1}  + \frac{1}{2}{ \lfloor{pw/2}\rfloor \choose \lfloor{p/2}\rfloor}$), 
and it  has recently been conjectured by Sambale in \cite{Sambale2018} that when $w=2$, no two blocks in this list are Morita equivalent.
(Sambale's Conjecture is stated for $p$-blocks over $\mathbb{Z}$, and he has verified his conjecture computationally, for $p\leq 11$.)

\smallskip

Here we now focus on $p$-blocks of symmetric groups of weight $w=2$, over $F$, for $p\geq 3$. Our aim is to give a precise graph-theoretic description
of the (Gabriel or Ext-) quivers of such blocks, provided their $p$-cores are hook partitions. Since Morita equivalent blocks have isomorphic quivers, we shall
also gain more evidence for Sambale's Conjecture. The case $p=3$ is somewhat special, as we shall explain in \ref{noth p=3}. Therefore, we shall
mostly consider the case where $p\geq 5$.

\smallskip

By work of Scopes \cite{Scopes1995}, Richards \cite{Richards1996}, Chuang--Tan \cite{ChuangTan2001}, Martin \cite{Martin1989,Martin1990}, and
others, much is known about the structure of weight-2 blocks in odd characteristic. In particular, Scopes \cite{Scopes1995} has shown that, for every $p$-block $B$ of weight $w=2$ in the minimal list  mentioned above, 
 there is a finite sequence
of blocks of weight $2$ starting with the principal block of
$F\fS_{2p}$ such that two consecutive 
blocks in the sequence are related by
what is called a $(2:1)$-pair. 
Scopes has also shown that, whenever $(B,\bar{B})$ is a $(2:1)$-pair 
of weight-2 blocks of $F\mathfrak{S}_n$ and $F\mathfrak{S}_{n-1}$, respectively, 
a substantial part of the representation theory of $B$ can already be determined from that of $\bar{B}$; in particular,
the quivers of $B$ and $\bar{B}$ almost coincide, except for one vertex and its adjacent edges. Exploiting this, she has further shown that, for every
block $B$  of $F\mathfrak{S}_n$ of weight 2 and all simple $B$-modules $S$ and $T$, the dimension ${\rm Ext}^1_B(S, T)$ is at most 1.
Since simple modules of symmetric groups are self-dual (see \cite[(7.1.14)]{JK1981}), one also has ${\rm Ext}^1_B(S, T)\cong {\rm Ext}^1_B(T,S)$. Hence, if
${\rm Ext}^1_B(S, T)\neq \{0\}$, then
one usually simply connects $S$ and $T$ by a single edge.

Improvements of Scopes's results due to
Richards \cite{Richards1996} and 
Chuang--Tan \cite{ChuangTan2001}
lead to a 
general description of the quiver of any such weight-2 block. 
We now turn this into a combinatorial algorithm, which is the content of Theorem~\ref{thm Loewy except}. With this, we obtain our first main result of this paper:

\begin{thm}\label{thm main1}
Let $p\geq 5$, and let $B_{k,l}(n,p)$ be a block of $F\mathfrak{S}_n$ of $p$-weight $2$ and $p$-core $(k,1^l)$, for
some $k,l\in\NN_0$. With the graphs defined in Appendix~\ref{sec quiv}, the Ext-quiver of $B_{k,l}(n,p)$ is isomorphic, as an undirected graph, to

\begin{enumerate}

\item[{\rm (a)}] $Q_{0,0}(p)$, if $k=l=0$,
\item[{\rm (b)}] $Q_{k,l}(p)$, if $1\leq k+l\leq p-1$,
\item[{\rm (c)}] $Q_{k-1,l-1}(p)$, if $p+1\leq k+l\leq 2p-1$.
\end{enumerate}
\end{thm}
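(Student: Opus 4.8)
The plan is to argue by induction along the chains of $(2:1)$-pairs provided by Scopes \cite{Scopes1995}, using the algorithm of Theorem~\ref{thm Loewy except} for the inductive step and the principal block $B_{0,0}(2p,p)$ of $F\fS_{2p}$ for the base case. First I would settle the base case directly: the decomposition matrix and the Loewy structures of the Specht modules lying in the principal block of $F\fS_{2p}$ are explicitly known (Martin \cite{Martin1989,Martin1990}, Richards \cite{Richards1996}), and since the block has weight $2$ one has $\dim\Ext^1_B(S,T)\le1$ for all simple modules $S,T$ (Scopes); so the Ext-quiver is the undirected graph recording the pairs $\{S,T\}$ with $\Ext^1_B(S,T)\ne\{0\}$, and reading these off and comparing with Appendix~\ref{sec quiv} identifies it with $Q_{0,0}(p)$.

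For the inductive step I would work on the abacus. Displaying the $p$-core $(k,1^l)$ there, one checks (this is where $p\ge5$ enters) that for the appropriate ranges each block $B_{k,l}(n,p)$ with hook $p$-core and $k+l\ge1$ forms a $(2:1)$-pair with $B_{k-1,l}(n-1,p)$ or with $B_{k,l-1}(n-1,p)$, i.e. with another hook-core block of size one less. A double induction on $(k,l)$ rooted at $B_{0,0}(2p,p)$, with each step supplied by Theorem~\ref{thm Loewy except}, then yields the quiver of every $B_{k,l}(n,p)$ with $k+l\le p-1$ --- provided that at each stage the graph produced by the algorithm is recognised as the appendix graph $Q_{k,l}(p)$. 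This recognition step is the crux: it requires matching the recursively-built graph with the uniform closed form of Appendix~\ref{sec quiv} while tracking how the labelling of the simple modules (equivalently, of the partitions in the block) is permuted and enlarged by the single exceptional vertex created at each $(2:1)$-pair, i.e. verifying that the family $\{Q_{k,l}(p)\}$ is stable under the local modification prescribed by the algorithm.

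Case (c), the long hooks $p+1\le k+l\le2p-1$, needs separate handling because the elementary move is obstructed: when $k+l=p+1$ neither $(k-1,1^l)$ nor $(k,1^{l-1})$ is a $p$-core, so these blocks are not reached from the short hooks by the $(2:1)$-pairs above. Instead I would connect $B_{k,l}(n,p)$ back to $F\fS_{2p}$ by a Scopes chain of $(2:1)$-pairs of weight-$2$ blocks passing through blocks with non-hook $p$-cores --- which is legitimate because the general description of weight-$2$ quivers underlying Theorem~\ref{thm Loewy except} (Richards \cite{Richards1996}, Chuang--Tan \cite{ChuangTan2001}) applies to arbitrary cores --- and run the algorithm along it. The outcome to be extracted is the shift to $Q_{k-1,l-1}(p)$: on the abacus, placing the isolated first-column bead more than $p$ positions above the critical gap produces, after the relevant runner manipulations, precisely the local configuration of the short hook $(k-1,1^{l-1})$, hence the coincidence of quivers. (One may alternatively route this through conjugation and Mullineux's bijection together with Scopes moves, which is also what underlies the Morita-equivalence corollary.)

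The main obstacle I anticipate is the recognition step of the second paragraph: showing that the explicitly-drawn graphs $Q_{k,l}(p)$ of the appendix form a fixed point of the $(2:1)$-pair algorithm --- that changing one hook parameter by $1$ carries $Q_{k,l}(p)$ to $Q_{k\pm1,l}(p)$ or $Q_{k,l\pm1}(p)$ by exactly the prescribed local operation (delete or insert the exceptional vertex and its incident edges, then relabel). This is a finite but delicate check, made more so by the need to carry along the correct vertex--module identification for the induction to close, and by the boundary behaviour near $k+l=0$, $k+l=p$ and $k+l=2p-1$.
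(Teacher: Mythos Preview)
Your strategy for parts (a) and (b) is exactly that of the paper: induct on $k+l$ along $(2{:}1)$-pairs between hook-core blocks, with base case the principal block of $F\fS_{2p}$, and use Theorem~\ref{thm Loewy except} together with the partial Scopes equivalence of \ref{noth partial Scopes} to carry the quiver across each step. The paper does precisely this (via the more detailed Theorem~\ref{thm main1 details}), and the ``recognition step'' you identify as the crux is indeed the bulk of the work there: Proposition~\ref{prop (2:1) pairs hooks}, Corollary~\ref{cor l and r hooks} and Lemma~\ref{lemma alpha max} pin down which of the six cases of Remark~\ref{rem l and r} applies and where $\bar{\alpha}$ sits in the quiver, so that the local modification can be matched against the appendix pictures.

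Where you diverge is case (c), and here you have overlooked a much simpler bridge. You are right that when $k+l=p+1$ neither $(k-1,1^l)$ nor $(k,1^{l-1})$ is a $p$-core, so no $(2{:}1)$-pair with a hook-core block exists. But there \emph{is} a $(2{:}2)$-pair: the two removable nodes of $(k,1^l)$ have the same $p$-residue precisely when $k+l\equiv 1\pmod p$, so $B_{k,l}$ and $B_{k-1,l-1}$ form a $(2{:}2)$-pair and are Scopes equivalent (this is Proposition~\ref{prop (2:1) pairs hooks}(c)). Since $(k-1)+(l-1)=p-1$, the block $B_{k-1,l-1}$ is already covered by part (b), and the Scopes equivalence preserves the quiver together with the $\partial$-values and colours (see \ref{noth Scopes}); hence $B_{k,l}$ has quiver $Q_{k-1,l-1}(p)$ immediately. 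Once this boundary is crossed, Proposition~\ref{prop (2:1) pairs hooks}(a),(b) supplies $(2{:}1)$-pairs between hook-core blocks throughout the range $p+2\le k+l\le 2p-1$, and the very same induction as in part (b) runs again. Your proposed detour through non-hook cores would in principle work, but it forfeits the closed form $Q_{k,l}(p)$ along the way and turns the recognition step into an uncontrolled computation; the $(2{:}2)$ Scopes move keeps the whole argument inside the hook family.
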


In fact, in Section~\ref{sec main1}, we shall prove a more detailed version of this statement. 
To do so, we  start with the quiver of the 
principal block of $F\fS_{2p}$, which has already been known by work of Martin \cite{Martin1989}.
In Section~\ref{sec B0} we shall give an elementary and self-contained proof of this result in the case $p\geq 5$. Our most important 
ingredient here will be the decomposition matrix of the principal block of $F\fS_{2p}$, a precise description of the Loewy structures of the Specht modules
in this block, and the results of Chuang--Tan \cite{ChuangTan2001}.
The quiver of the principal block of $F\fS_{2p}$ at hand, we shall then be able to apply Theorem \ref{thm Loewy except}, which gives an algorithm that describes how the quivers of two blocks labelled by hook partitions and forming
a $(2:1)$-pair are related. 
In principle, using this algorithm, one can compute the quiver of an arbitrary block of weight $2$. 
However, the number of quivers one has to consider increases with the prime, so that finding a general description
seems to be rather difficult. We expect that Sambale's Conjecture might follow if one knew
the precise graph structure of the quivers of all weight-2 blocks.

\smallskip

Note that, in the notation of Theorem~\ref{thm main1}, the blocks $B_{k,0}(n,p)$, for $k\in\{0,\ldots,p-1\}$
are principal blocks.
It should be emphasized that the quivers of these have already been computed by Martin~\cite{Martin1989,Martin1990}, although the graphs
are not all
drawn correctly there, since they are not bipartite as they should be, by \cite{ChuangTan2001}.

\smallskip

The information on the quivers provided by Theorem~\ref{thm main1} is sufficient to distinguish  Morita
equivalence classes of blocks of weight $w=2$ whose cores are hook partitions.
The  following result 
is a direct consequence of Proposition~\ref{prop main2}, which shows that, for such blocks, there are only the known Morita equivalences, that is, the isomorphism between a block
and its conjugate, and the Scopes equivalences.

\begin{thm}\label{thm main2}
Let $p\geq 5$ be a prime. Then there are precisely $(p-1)p/2+1$ Morita equivalence classes of $p$-blocks of symmetric groups
of $p$-weight $2$ whose $p$-cores are hook partitions. Representatives of these are labelled by those $p$-core partitions
$(k,1^l)$ satisfying one of the following conditions:
\begin{enumerate}
\item[{\rm (a)}] $k=l=0$, or
\item[{\rm (b)}] $k>l$ and $1\leq k+l\leq p-1$, or
\item[{\rm (c)}] $k>l$ and $p+1 <  k+l\leq 2p-1$.
\end{enumerate}
\end{thm}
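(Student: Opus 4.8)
The plan is to deduce Theorem~\ref{thm main2} from Theorem~\ref{thm main1} (for the quiver invariant) together with the forthcoming Proposition~\ref{prop main2} (which pins down all Morita equivalences between the blocks under consideration). First I would establish the \emph{counting} part. The $p$-core of a weight-$2$ block of a symmetric group that happens to be a hook is of the form $(k,1^l)$ with $k\geq 1$, or the empty partition $(0)=(0,1^0)$; moreover, being a $p$-core forces $k+l\leq 2p-1$, and the value $k+l=p$ is excluded since a hook of size $p$ is never a $p$-core. Conjugation of partitions interchanges $(k,1^l)$ with $(l+1,1^{k-1})$, i.e. it swaps the two ``legs'' of the hook, so on the set of admissible pairs it acts by $(k,l)\mapsto(l+1,k-1)$ with the single fixed point being the self-conjugate hook; restricting to one representative per conjugacy class is exactly the condition $k>l$ (with the empty core treated separately). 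I would then count: the pairs with $k>l$ and $1\leq k+l\leq p-1$, the pairs with $k>l$ and $p+1<k+l\leq 2p-1$, plus the case $k=l=0$. A short bookkeeping argument (summing the number of pairs $(k,l)$ with fixed $k+l=s$ and $k>l$, namely $\lceil s/2\rceil$, over the two ranges of $s$) gives the total $(p-1)p/2+1$.

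Next I would show that the list in (a)--(c) is \emph{irredundant}, i.e. the corresponding blocks are pairwise non-Morita-equivalent. Here the key input is Theorem~\ref{thm main1}: the Ext-quiver is a Morita invariant, and the graphs $Q_{0,0}(p)$ and $Q_{k,l}(p)$ (for $1\leq k+l\leq p-1$) defined in Appendix~\ref{sec quiv} are pairwise non-isomorphic as undirected graphs. One must check that the reduction in case (c), namely $B_{k,l}(n,p)$ having quiver $Q_{k-1,l-1}(p)$, does not accidentally collide with any case-(b) block once the representative condition $k>l$ is imposed: if $(k,l)$ satisfies (c) then $(k-1,l-1)$ has the same difference $k-l>0$ but parameter sum $(k-1)+(l-1)=k+l-2$ lying in the range $p-1\leq k+l-2\leq 2p-3$, and one argues (using the explicit description of the $Q_{k,l}(p)$) that these quivers are distinct from those arising from (b) and from each other. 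This is essentially a matter of reading off a graph invariant---number of vertices, or the shape near the ``exceptional'' vertex---from the appendix, and I expect it to be routine once Theorem~\ref{thm main1} and the appendix are in place.

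Finally I would show the list is \emph{complete}, i.e. every weight-$2$ block with hook $p$-core is Morita equivalent to one on the list. Every such block $B_{k,l}(n,p)$ with $0\leq k+l\leq 2p-1$, $k+l\neq p$, either already has $k>l$ or is the conjugate block of one with $k>l$; since a block and its conjugate (via the sign-twist automorphism of $\fS_n$) are Morita equivalent, we may assume $k>l$. If $1\leq k+l\leq p-1$ or $p+1<k+l\leq 2p-1$ we are on the list. The remaining boundary case $k+l=p+1$ with $k>l$ is handled by Proposition~\ref{prop main2}: such a block lies in the same Scopes class as one with smaller parameter sum (Scopes equivalences are Morita equivalences), reducing it to case (b); indeed $(k-1,l-1)$ then has parameter sum $p-1$, which is exactly why case (c) of Theorem~\ref{thm main1} reads off $Q_{k-1,l-1}(p)$. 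The main obstacle is the interplay at the two ``boundaries'' $k+l=p\pm1$: one must make sure that the reductions supplied by Scopes moves and by conjugation are consistent with the quiver computations of Theorem~\ref{thm main1}, and that no block is both on the list and Morita-equivalent to another list member. All of this is packaged into Proposition~\ref{prop main2}, from which the theorem then follows formally.
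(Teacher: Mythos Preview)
Your overall strategy---reduce to conjugation and Scopes moves for completeness, and invoke Theorem~\ref{thm main1} together with the graph-isomorphism classification in the appendix (Proposition~\ref{prop graph isos}) for irredundancy---is exactly the paper's route via Proposition~\ref{prop main2}. So the architecture is right.

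There is, however, a concrete error in your counting paragraph. You characterize the hook $p$-cores as those $(k,1^l)$ with $k+l\leq 2p-1$ and $k+l\neq p$; this is incomplete. For $k+l\geq p+1$ one needs the further constraints $k\leq p$ and $l\leq p-1$ (otherwise the hook has a rim $p$-hook in its first row or first column). Consequently your formula ``number of pairs with $k+l=s$ and $k>l$ is $\lceil s/2\rceil$'' is correct for range~(b) but badly overcounts for range~(c): for instance when $p=5$ and $s=7$, your formula gives $4$, whereas only $(5,1^2)$ and $(4,1^3)$ are actually $5$-cores. Summing $\lceil s/2\rceil$ over both ranges for $p=5$ yields $1+6+13=20$, not $11$. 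The correct count in range~(c), taking $l\in[s-p,\lfloor(s-1)/2\rfloor]$, gives $\lceil(2p-s)/2\rceil$ for each $s$; summing over $s=p+2,\ldots,2p-1$ then yields $(p-1)^2/4$, and
\[
1+\frac{(p-1)(p+1)}{4}+\frac{(p-1)^2}{4}=1+\frac{p(p-1)}{2}
\]
as required. Once you repair the $p$-core characterization, the rest of your argument goes through and coincides with the paper's.
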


We shall give a proof of Proposition~\ref{prop main2} at the end of Section~\ref{sec main1}, by applying Proposition~\ref{prop graph isos}. The arguments will be completely graph-theoretic.

\smallskip

A remark on the cases $p\in\{2,3\}$ seems to be in order. There are five Scopes equivalence classes of $3$-blocks of symmetric groups of weight 2. Their
structure is completely understood, and their quivers easily determined. We shall present them in \ref{noth p=3}. The case $p=2$ is
not covered by \cite{Scopes1995}, and behaves differently. However, by \cite{Scopes1991}, every $2$-block of a symmetric
group of weight 2 is Morita equivalent to $F\fS_4$, or to the principal block of $F\fS_5$. Both are well known, and their quivers can be found in \cite[Appendix D(2B), D(2A)]{Erdmann1990}.

\smallskip

The present paper is organized as follows:
In Section 2  we summarize background on representations of symmetric groups and fix some general notation.
Section 3 recalls  relevant results on blocks of weight $2$ from \cite{Scopes1995}, \cite{Richards1996}, and \cite{ChuangTan2001}. Furthermore, 
we establish Theorem \ref{thm Loewy except}, which will be the key ingredient in our inductive proof of Theorem~\ref{thm main1} in Section~\ref{sec main1}.

In Section 4 we prove our above main results on
blocks whose cores are hook partitions, that is, Theorems \ref{thm main1} and \ref{thm main2}. 
In the Appendix, we collect some useful abacus combinatorics that is used extensively throughout this paper.  Furthermore, for convenience, we discuss  the principal blocks of $F\fS_{2p}$ and $F\fS_{2p+1}$ and their quivers. The results presented in Appendix~\ref{sec B0} are not new,
but not too easily available in the literature. In the last part of the appendix, we introduce the graphs appearing in Theorem~\ref{thm main1}, and prove the combinatorial details for Theorem \ref{thm main2}.

\medskip 

\noindent
{\bf Acknowledgements:} We are grateful to the Mathematical Institute of the University of Oxford and the Department of Mathematics of the University of Eichst\"att-Ingolstadt
for their kind hospitality 
during mutual visits. Moreover, we gratefully acknowledge financial support through a 
Scheme 4 grant of the London Mathematical Society and a proFOR+ grant of the University of Eichst\"att-Ingolstadt.
Lastly, we should like to thank Tommy Hofmann and David Craven for their help with TikZ, and the referee for their careful reading of 
an earlier version of the manuscript.

\section{Notation and Preliminaries}\label{sec pre}

Throughout this section, let $F$ be an algebraically closed field of characteristic $p>0$.
Whenever $G$ is a finite group, by an $FG$-module we understand a finitely generated left $FG$-module.
For background on general representation theory of finite groups we refer to \cite{NT}, for the standard notation and results
concerning representations of symmetric groups we refer to \cite{James1978,JK1981}.

\begin{noth}{\bf General notation.}\,\label{noth block nota}
(a)\, Suppose that $M$ and $N$ are $FG$-modules.
If $N$ is isomorphic to a direct summand of $M$, we write
$N\mid M$. 

If $N$ and $M$ have the same composition factors, that is, represent the same element in the Grothendieck group
of $FG$, then we write $M\sim N$. 
If $M$ and $N$ have no common composition factor, then we say that $M$ and $N$ are \textit{disjoint}.
For every simple $FG$-module $D$, we denote by $[M:D]$ the multiplicity of $D$ as a composition factor of $M$. 

The $F$-linear dual of $M$ will be denoted by $M^*$.

(b)\, Let $G$ be a finite group and $H\leq G$. Let further $B$ be a block of $FG$ and $b$ a block of $FH$. We have
the usual (block) restriction and (block) induction functors
\begin{alignat*}{2}
\res_H^G&: FG-\textbf{mod}\to FH-\textbf{mod}\,,& \quad \ind_H^G&: FH-\textbf{mod}\to FG-\textbf{mod}\,,\\
\res_b^B&: B-\textbf{mod}\to b-\textbf{mod}\,, &\quad  \ind_b^B&: b-\textbf{mod}\to B-\textbf{mod}\,.\\
\end{alignat*}
For ease of notation, we shall also write $M\downarrow_b:=\res_b^B(M):=b\cdot \res_H^G(M)$ and $N\uparrow^B:=\ind_b^B(N):=B\cdot \ind_H^G(N)$, for every
$B$-module $M$ and every $b$-module $N$.

\smallskip

(c)\, If $M$ is an $FG$-module and
 $i\geq 0$, then we denote the $i$th radical of $M$ by $\Rad^i(M)$ and the $i$th socle of $M$ by $\Soc_i(M)$. Moreover, we denote by
  $\Hd(M):=M/\Rad(M)$
the \textit{head} of $M$.

Suppose that $M$ has Loewy length $l\geq 1$ with
Loewy layers $\Rad^{i-1}(M)/\Rad^i(M)\cong D_{i1}\oplus\cdots \oplus D_{i r_i}$, for $i\in\{1,\ldots,l\}$, $r_1,\ldots,r_l\in \NN$
and simple $FG$-modules
$D_{i1},\ldots,D_{ir_i}$. Then we write
\begin{equation}\label{eqn Loewy}
M \ \approx \ \ \begin{matrix}   D_{11}\oplus\cdots \oplus D_{1r_1}\\\vdots\\D_{l1}\oplus\cdots \oplus D_{lr_l} \end{matrix}\,,
\end{equation}
and say that $M$ has \textit{Loewy structure} (\ref{eqn Loewy}).
\end{noth}

\begin{noth}\label{noth Ext quiver}{\bf The Ext-quiver.}\,
Suppose that $G$ is a finite group and that $A$ is the group algebra $FG$ or a block of $FG$. Let further $D_1,\ldots,D_r$ be
representatives of the isomorphism classes of simple $A$-modules with projective covers
$P_1,\ldots,P_r$. The \textit{Ext-quiver} of $A$ is the directed graph with 
vertices $D_1,\ldots,D_r$, and, for $i,j\in\{1,\ldots,r\}$, the number of arrows from $D_i$ to $D_j$ equals
$$[\Rad(P_i)/\Rad^2(P_i)):D_j]=\dim_F(\mathrm{Ext}^1_A(D_i,D_j))=
[\Soc_2(P_j))/\Soc(P_j):D_i];
$$
see, \cite[I.6.3]{Erdmann1990}.

Recall that, for $i,j\in\{1,\ldots,r\}$, one also has $\Ext^1_A(D_i,D_j)\cong \Ext^1 _A(D_j^*,D_i^*)$. We want to apply this
when $A$ is a block of  a group algebra of a symmetric group. For these, all simple modules
are self-dual (see [(7.1.14)]\cite{JK1981}), so that we shall simply connect $D_i$ and $D_j$ by $\dim_F(\mathrm{Ext}^1_A(D_i,D_j))$ undirected edges, for all $i,j\in\{1,\ldots,r\}$.
In fact for a block $A$ of weight $w=2$ in characteristic $p>2$, the dimension of ${\rm Ext}^1_A(D_i, D_j)$ is at most $1$, by  \cite{Scopes1995}.
\end{noth}

\begin{noth}{\bf Partitions, modules and blocks of $F\mathfrak{S}_n$.}\,\label{noth part}
(a)\, We write $\mu\vdash n$, for every partition $\mu$ of $n$, and $\lambda\vdash_p n$, for every $p$-regular partition $\lambda$ of $n$.
By $\mu'$ we denote the conjugate of $\mu$, that is, the Young diagram of $\mu'$ is the transposed of the Young diagram of $\mu$.
Recall that if $\mu$ is $p$-regular, then $\mu'$ is $p$-restricted.

As usual, the dominance ordering on partitions of $n$ will be denoted by $\unrhd$, the lexicographic ordering on partitions of $n$ by $\geq $.

\smallskip

(b)\, The Specht $F\mathfrak{S}_n$-module labelled by $\mu\vdash n$
and the simple $F\mathfrak{S}_n$-module labelled by $\lambda\vdash_p n$ will be denoted by $S^\mu$ and $D^\lambda$, respectively.
Recall that $D^\lambda$ is self-dual.

Suppose that $p\geq 3$. Then, for every $p$-regular partition $\lambda$ of $n$ we denote by $\mathbf{m}(\lambda)$ its \textit{Mullineux conjugate}, that is, the $p$-regular partition of $n$ such that $D^{\mathbf{m}(\lambda)}\cong D^\lambda\otimes \sgn$. 
Recall from \cite[Theorem 8.15]{James1978} that one has $S^{\lambda'}\cong (S^\lambda\otimes \sgn)^*$; in particular, the socle of $S^{\lambda'}$
is isomorphic to $D^{\mathbf{m}(\lambda)}$.

\smallskip

(c)\, Given a block $B$ of $F\mathfrak{S}_n$, we denote by $\kappa_B$ its $p$-core and by $w_B$ its $p$-weight.
As well, for every partition $\lambda\vdash n$, we denote by $\kappa_\lambda$ and $w_\lambda$ its $p$-core
and its $p$-weight, respectively. We say that $\lambda$ \textit{belongs to} $B$ (or \textit{$B$ contains $\lambda$}), if 
$\kappa_\lambda=\kappa_B$.

Conversely, if $\kappa$ is some $p$-core partition and $w\geq 0$ is an integer, then $F\mathfrak{S}_{|\kappa|+pw}$ has a 
block with $p$-core $\kappa$ and $p$-weight $w$. 
\end{noth}

\begin{noth}\label{noth Specht filtration}
{\bf Specht filtrations.}\, An $F\mathfrak{S}_n$-module $M$ is said to admit a \textit{Specht filtration} if there are a series 
of $F\mathfrak{S}_n$-submodules 
$$\{0\}=M_0\subset M_1\subset\cdots\subset M_r\subset M_{r+1}=M$$ 
and partitions $\rho_1,\ldots,\rho_{r+1}$ of $n$ such that $M_i/M_{i-1}\cong S^{\rho_i}$, for $i\in\{1,\ldots,r+1\}$.
In general, $M$ may have several Specht filtrations.
Moreover, if $p\in\{2,3\}$, then the number of factors isomorphic to a given Specht $F\mathfrak{S}_n$-module $S^\lambda$ may depend on the chosen filtration; this has been shown by Hemmer and Nakano in \cite{HN2004}. If, however, $p\geq 5$, then, by \cite{HN2004}
again,
the number of factors isomorphic to a Specht $F\mathfrak{S}_n$-module $S^\lambda$ 
is the same for every Specht filtration of $M$; we shall denote this multiplicity by $(M:S^\lambda)$.

\smallskip

Every projective $F\mathfrak{S}_n$-module admits a Specht filtration; this is well known, see for example \cite[(2.6)]{Donkin}, or \cite[(6.1)]{Green}. If $p\geq 5$ and if $\lambda$ is a $p$-regular partition of $n$, then
one has $(P^\lambda:S^\lambda)=1$. Moreover, if $\mu\neq \lambda$ is any partition of $n$ such that $(P^\lambda:S^\mu)>0$,
then $\mu\rhd\lambda$, by Brauer Reciprocity.
\end{noth}

\begin{noth}{\bf Abacus displays.}\,\label{noth abacus}
Throughout this article, we shall employ some standard combinatorial methods to identify partitions with suitable abacus displays; see \cite[Section 2.7]{JK1981}
and \cite{Scopes1995}. 

\smallskip

(a)\, 
Given a partition $\lambda=(\lambda_1,\ldots,\lambda_s)$ of $n$ and any integer $t\geq s$, we can display $\lambda$ on an abacus 
 $\Gamma_\lambda:=\Gamma_{\lambda,t}$ with $p$ runners and $t$ beads,  one at each of the positions $\beta_i:= \lambda_i-i+t$,
for $i\in\{1,\ldots,s\}$ and $\beta_i:=  -i+t$  if $i>s$. Here we label the positions  
  from left to right, then top to bottom, starting with $0$. 
   In accordance with \cite{Scopes1995}, we label the runners of a fixed abacus from $1,\ldots,p$. Then the  places on 
    runner $i$  represent the non-negative integers with residue $i-1$ modulo $p$.

Note that, given any abacus display $\Gamma_\lambda$ of $\lambda$, one can
easily read off $\lambda$ as follows: for each bead on the abacus, count the number of gaps preceding the bead. Then this number of gaps equals the corresponding part of $\lambda$. For instance, if $p=3$, then the following abaci represent the partition $\lambda=(6,3^3,2^2)$:

\begin{center}
$\Gamma_{\lambda,6}$: \quad \begin{tabular}{ccc}
$-$&$-$&$\bullet$\\
$\bullet$&$-$&$\bullet$\\
$\bullet$&$\bullet$&$-$\\
$-$&$-$&$\bullet$
\end{tabular}\,\quad \text{ and }\quad 
$\Gamma_{\lambda, 7}$: \quad 
\begin{tabular}{ccc} $\bullet$ & $-$& $-$\\
                                 $\bullet$&$\bullet$ & $-$\\
                                 $\bullet$ & $\bullet$ & $\bullet$\\
                                 $-$ & $-$ & $-$\\
                                 $\bullet$& & \end{tabular}
                                 \end{center}

This also illustrates the effect of varying the total number of beads on $\Gamma_{\lambda}$: 
inserting a bead
at position 0 of a given abacus $\Gamma_{\lambda,s}$ and moving every other bead to the next position gives
an abacus display $\Gamma_{\lambda,s+1}$.

\smallskip

Recall further that moving a bead on some runner of $\Gamma_\lambda$ one position up corresponds to removing a rim $p$-hook from
the Young diagram $[\lambda]$, while moving a bead from some runner of $\Gamma_\lambda$ one position to the left (respectively, to the right)
corresponds to removing (respectively, adding) a node to $[\lambda]$. This describe the branching rules. 
Moving all beads on all runners as far up as possible, one obtains an
abacus display of the $p$-core $\kappa_\lambda$.

Lastly, suppose that $B$ is any block of $F\mathfrak{S}_n$ and $\Gamma_{\kappa_B}$ is any abacus display of 
$\kappa_B$ and suppose further that there are $k>0$ more beads on some runner $i>1$ than on runner $i-1$ of
$\Gamma_{\kappa_B}$. One may interchange runners $i$ and $i-1$ to get an abacus display of $\kappa_{\bar{B}}$, where
$\bar{B}$ is a block of $F\mathfrak{S}_{n-k}$ with $w_{\bar{B}}=w_B=:w$; in this case one says that 
$(B,\bar{B})$ is a 
\textit{$(w:k)$-pair.}
In the above example we get, for instance, $\kappa_\lambda=\kappa_B=(3,1)$, and the block $B$ of $F\mathfrak{S}_{19}$
forms a $(5:2)$-pair with the block $\bar{B}$ of $F\mathfrak{S}_{17}$ with $\kappa_{\bar{B}}=(2)$.

As Scopes has shown in \cite{Scopes1991}, if $k\geq w$, then $B$ and $\bar{B}$ are Morita equivalent; the particular Morita equivalence
between $B$ and $\bar{B}$ established in \cite{Scopes1991} is called \textit{Scopes equivalence}. 
Moreover, she proved that for a fixed $w$, there is a finite list of blocks $\bar{B}$ such that every other block can be obtained from 
some block in this list by a sequence of $(w:k)$ pairs, for some $k\geq w$.
We shall come back to 
the notion of Scopes equivalence later in \ref{noth Scopes} and \ref{noth partial Scopes}.

\smallskip

(b)\, In this paper we shall focus on partitions and blocks of weight 2. 
To this end, we recall one last bit of notation from
\cite{Scopes1995}. Suppose that $\lambda=(\lambda_1,\ldots,\lambda_r)$ is a  partition of $n$ of $p$-weight $2$. As above, let
$\Gamma_\lambda:=\Gamma_{\lambda,s}$ be an abacus display of $\lambda$, for some $s\geq r$. 
Suppose that, for $i\in\{1,\ldots,p\}$
we have $m_i$ beads on runner $i$ of $\Gamma_\lambda$. 
The we shall say that we represent $\lambda$ on an  \textit{$[m_1,\ldots,m_p]$-abacus}.

For  $\lambda$ of weight 2, there are exactly two beads that can be moved up on their respective runner in  
$\Gamma_\lambda$, and there are three possible constellations: 

\smallskip

\quad (i)\, There is a bead on some runner $i$ that can be moved two positions up. Then we denote $\lambda$ by $\langle i\rangle$.

\quad (ii)\, There are $1\leq i<j\leq p$ such that there is a movable bead on runner $i$ and a movable bead on runner $j$. Then we denote
$\lambda$ by $\langle j,i\rangle$ or $\langle i,j\rangle$.

\quad (i)\, There is some runner $i$ that has a gap followed by two consecutive beads. Thus one can first move the upper bead one position up, then
the lower bead. In this case, we denote $\lambda$ by $\langle i,i\rangle$.

\medskip

Note that this labelling depends on the fixed choice of an abacus. 
 We shall always state clearly which abacus 
 is used.
\end{noth}

\section{Blocks of Weight Two}\label{sec weight 2}

Throughout this section, let $p\geq 3$ be a prime and let $F$ be an algebraically closed field of characteristic $p$.
We begin by recalling some crucial notation from \cite[Section 4]{Richards1996} and \cite[Section 2]{ChuangTan2001}.

\begin{noth}{\bf Colours and $\partial$-values.}\,\label{noth partial}
Suppose that $\lambda$
 is a partition of $n$ with $p$-weight $2$ and $p$-core $\kappa=(\kappa_1,\ldots,\kappa_t)$.

\smallskip

(a)\, One can remove exactly two rim $p$-hooks from $[\lambda]$ to obtain $[\kappa]$. Although there is, in general, not a unique way to do so,
the absolute value of the difference in the leg lengths of the two rim hooks is well defined, and is denoted by $\partial(\lambda)$; see \cite[Lemma~4.1]{Richards1996}.

\smallskip

(b)\, Consider the hook diagram $H_\lambda$ of $\lambda$.
Since $\lambda$ has $p$-weight $2$, there are exactly two entries in $H_\lambda$ that are divisible by $p$; see \cite[2.7.40]{JK1981}.
There are two possibilities for these entries: either both are equal to $p$, or one equals $p$ and the other equals $2p$. Suppose further that
$\partial(\lambda)=0$. If $H_\lambda$ has two entries equal to $p$, then the leg lengths of the corresponding hooks differ by $1$; see \cite[p. 397]{Richards1996}. If the larger leg length is even, one calls $\lambda$ \textit{black}, otherwise \textit{white}. 
If $H_\lambda$ has an entry equal to $2p$ and if the leg length of the corresponding hook has residue $0$ or $3$ modulo $4$, then one also calls
$\lambda$ \textit{black}, otherwise \textit{white}.

\smallskip

(c)\, If $\lambda$ is $p$-restricted, then there is a ($p$-regular) partition $\lambda_+$ of $n$
that is the lexicographically smallest partition with the following properties: $\lambda_+>\lambda$, $\lambda_+$ has $p$-core
$\kappa$, $p$-weight $2$ as well as the same $\partial$-value and (if $\partial(\lambda)=0$) the same colour as $\lambda$.  
As well, if $\lambda$ is $p$-restricted, then $\lambda'$ is $p$-regular, and one has $\lambda_+=\mathbf{m}(\lambda')$.
If $\lambda$ is not $p$-restricted, then $\lambda_+$ does not exist; see \cite[Remarks 2.1]{ChuangTan2001}.

If $\lambda$ is $p$-regular, then there is a ($p$-restricted) partition $\lambda_-$ of $n$ that is the lexicographically largest partition
with the following properties: $\lambda_-<\lambda$, $\lambda_-$ has $p$-core
$\kappa$, $p$-weight $2$ as well as the same $\partial$-value and (if $\partial(\lambda)=0$) the same colour as $\lambda$.  
Moreover, one then has $\lambda_-=\mathbf{m}(\lambda)'$.
If $\lambda$ is not $p$-regular, then $\lambda_-$ does not exist; see \cite[Remarks 2.1]{ChuangTan2001}.

\smallskip

\end{noth}

Next we recall from \cite[Section 3]{Scopes1995} some properties of $(2:1)$-pairs of blocks that will be 
fundamental later in this article.

\begin{noth}{\bf Exceptional partitions of $(2:1)$-pairs.}\,\label{noth (2:1)}
Suppose that $B$ is a block of $F\mathfrak{S}_n$ of weight $2$ with $\kappa_B=(\kappa_1,\ldots,\kappa_t)$ where $\kappa_t\neq 0$.
Furthermore, let $\bar{B}$ be a block of $F\mathfrak{S}_{n-1}$ of weight $2$ such that $(B,\bar{B})$ is a $(2:1)$-pair.
Let $s\geq t$.
As in \ref{noth abacus}, we display $\kappa_B$, $\kappa_{\bar{B}}$ as well as all partitions of $B$ and $\bar{B}$, respectively, on an $[m_1, \ldots, m_p]$-abacus with $2p+s$ beads. 
For ease of notation we identify partitions with their abacus displays as explained in \ref{noth abacus}.
With a suitable choice of $s$,  there is a unique $i\in\{2,\ldots,p\}$ such that $\kappa_{\bar{B}}$ is obtained from $\kappa_B$ by interchanging the $i$th and the $(i-1)$st runner.
Following \cite[Definition 3.1, Definition 3.2]{Scopes1995}, we consider the following partitions of $B$ and $\bar{B}$, respectively:
\begin{align*}
\alpha&:=\alpha(B,\bar{B}):=\langle i,i\rangle\,,\quad \beta:=\beta(B,\bar{B}):=\langle i,i-1\rangle\,,\quad \gamma:=\gamma(B,\bar{B}):=\langle i-1\rangle\,,\\
\bar{\alpha}&:=\bar{\alpha}(B,\bar{B}):=\langle i\rangle\,,\quad \bar{\beta}:=\bar{\beta}(B,\bar{B}):=\langle i,i-1\rangle\,,\quad \bar{\gamma}:=\bar{\gamma}(B,\bar{B}):=\langle i-1,i-1\rangle\,.
\end{align*}

From now on we shall refer to $\alpha,\beta$ and $\gamma$ as the \textit{exceptional partitions} of $B$ (with respect to the pair $(B,\bar{B})$).
Analogously, we shall call $\bar{\alpha},\bar{\beta}$ and $\bar{\gamma}$ the \textit{exceptional partitions} of $\bar{B}$ (with respect to the pair $(B,\bar{B})$). Every partition of $B$ and $\bar{B}$, respectively, that is not exceptional will be called \textit{good} (with respect to the pair $(B,\bar{B})$). 
A $B$-module (respectively, $\bar{B}$-module) will be called \textit{good} if all its composition factors are labelled by good partitions.

By \cite[Lemma~3.5]{Scopes1995}, one has a bijection $\Phi:=\Phi(B,\bar{B})$ between the set of good partitions of $B$ and the set
of good partitions of $\bar{B}$ that preserves the lexicographic ordering as well as $p$-regularity and $p$-singularity. Given a good partition $\lambda$ of $B$, one obtains $\Phi(\lambda)$ by interchanging the 
$i$th and $(i-1)$st runner of the abacus. We shall often denote $\Phi(\lambda)$ by $\bar{\lambda}$, for every good partition of $B$, 
and $\Phi^{-1}(\mu)$ by $\hat{\mu}$, for every good partition $\mu$ of $\bar{B}$.

It should be emphasized that neither the exceptional partitions of $B$ and $\bar{B}$ nor the bijection $\Phi$ depends on the chosen
abacus displays.
\end{noth}

\begin{rem}\label{rem l and r}
Suppose that $B$ and $\bar{B}$ are blocks of $F\mathfrak{S}_n$ and $F\mathfrak{S}_{n-1}$, respectively, of weight $2$ that form a 
$(2:1)$-pair. Moreover, let $\kappa_B=(\kappa_1,\ldots,\kappa_t)$ with $\kappa_t\neq 0$. As in \ref{noth (2:1)}, we display
$\kappa_B$, $\kappa_{\bar{B}}$ and all partitions of $B$ and $\bar{B}$ on an abacus with $s+2p$ beads, for a fixed $s\geq t$.  
Suppose that $\kappa_{\bar{B}}$ is obtained from $\kappa_B$ by swapping runners $i$ and $i-1$.
Again in the notation
of \ref{noth (2:1)}, we consider the $i$th and $(i-1)$st runner of the abaci displaying the exceptional partitions of $B$ and $\bar{B}$. Then we 
have the following constellations, where, in each case, $l_1,l_2,r_1,r_2$ are understood to be the numbers of beads in the respective parts of the 
abacus under consideration, as shown in the diagrams below.

\smallskip

\begin{center}
\begin{tabular}{cccccc}
\multicolumn{6}{c}{$\bar{\alpha}=\langle i\rangle$:}\\
&&&&&\\
$\cdots$&$\cdots$&$\bullet$&$\bullet$&$\cdots$&$\cdots$\\
$\cdots$&$\cdots$&$\vdots$&$\vdots$&$\cdots$&$\cdots$\\
$\cdots$&$\cdots$&$\bullet$&$\bullet$&$\cdots$&$\cdots$\\
$\cdots$&$\cdots$&$\bullet$&$-$&\multicolumn{2}{c}{\cellcolor{lightgray} $r_1$}\\
\multicolumn{2}{c}{\cellcolor{lightgray} $l_1$}&$\bullet$&$-$&\multicolumn{2}{c}{\cellcolor{gray} $r_2$}\\
\multicolumn{2}{c}{\cellcolor{gray} $l_2$}&$-$&$\bullet$&$\cdots$&$\cdots$\\
\end{tabular}\;, \quad \quad
\begin{tabular}{cccccc}
\multicolumn{6}{c}{$\bar{\beta}=\langle i-1,i\rangle$:}\\
&&&&&\\
$\cdots$&$\cdots$&$\bullet$&$\bullet$&$\cdots$&$\cdots$\\
$\cdots$&$\cdots$&$\vdots$&$\vdots$&$\cdots$&$\cdots$\\
$\cdots$&$\cdots$&$\bullet$&$\bullet$&$\cdots$&$\cdots$\\
$\cdots$&$\cdots$&$\bullet$&$-$&\multicolumn{2}{c}{\cellcolor{lightgray} $r_1$}\\
\multicolumn{2}{c}{\cellcolor{lightgray} $l_1$}&$-$&$\bullet$&\multicolumn{2}{c}{\cellcolor{gray} $r_2$}\\
\multicolumn{2}{c}{\cellcolor{gray} $l_2$}&$\bullet$&$-$&$\cdots$&$\cdots$\\
\end{tabular}\;, 
\end{center}

\bigskip
\bigskip

\begin{center}
\begin{tabular}{cccccc}
\multicolumn{6}{c}{$\bar{\gamma}=\langle i-1,i-1\rangle$:}\\
&&&&&\\
$\cdots$&$\cdots$&$\bullet$&$\bullet$&$\cdots$&$\cdots$\\
$\cdots$&$\cdots$&$\vdots$&$\vdots$&$\cdots$&$\cdots$\\
$\cdots$&$\cdots$&$\bullet$&$\bullet$&$\cdots$&$\cdots$\\
$\cdots$&$\cdots$&$-$&$\bullet$&\multicolumn{2}{c}{\cellcolor{lightgray} $r_1$}\\
\multicolumn{2}{c}{\cellcolor{lightgray} $l_1$}&$\bullet$&$-$&\multicolumn{2}{c}{\cellcolor{gray} $r_2$}\\
\multicolumn{2}{c}{\cellcolor{gray} $l_2$}&$\bullet$&$-$&$\cdots$&$\cdots$\\
\end{tabular}\;, \quad\quad
\begin{tabular}{cccccc}
\multicolumn{6}{c}{$\alpha=\langle i,i\rangle$:}\\
&&&&&\\
$\cdots$&$\cdots$&$\bullet$&$\bullet$&$\cdots$&$\cdots$\\
$\cdots$&$\cdots$&$\vdots$&$\vdots$&$\cdots$&$\cdots$\\
$\cdots$&$\cdots$&$\bullet$&$\bullet$&$\cdots$&$\cdots$\\
$\cdots$&$\cdots$&$\bullet$&$-$&\multicolumn{2}{c}{\cellcolor{lightgray} $r_1$}\\
\multicolumn{2}{c}{\cellcolor{lightgray} $l_1$}&$-$&$\bullet$&\multicolumn{2}{c}{\cellcolor{gray} $r_2$}\\
\multicolumn{2}{c}{\cellcolor{gray} $l_2$}&$-$&$\bullet$&$\cdots$&$\cdots$\\
\end{tabular}\;,\quad\quad 
\end{center}

\bigskip

\bigskip

\begin{center}
\begin{tabular}{cccccc}
\multicolumn{6}{c}{$\beta=\langle i-1,i\rangle$:}\\
&&&&&\\
$\cdots$&$\cdots$&$\bullet$&$\bullet$&$\cdots$&$\cdots$\\
$\cdots$&$\cdots$&$\vdots$&$\vdots$&$\cdots$&$\cdots$\\
$\cdots$&$\cdots$&$\bullet$&$\bullet$&$\cdots$&$\cdots$\\
$\cdots$&$\cdots$&$-$&$\bullet$&\multicolumn{2}{c}{\cellcolor{lightgray} $r_1$}\\
\multicolumn{2}{c}{\cellcolor{lightgray} $l_1$}&$\bullet$&$-$&\multicolumn{2}{c}{\cellcolor{gray} $r_2$}\\
\multicolumn{2}{c}{\cellcolor{gray} $l_2$}&$-$&$\bullet$&$\cdots$&$\cdots$\\
\end{tabular}\;, \quad\quad
\begin{tabular}{cccccc}
\multicolumn{6}{c}{$\gamma=\langle i-1\rangle$:}\\
&&&&&\\
$\cdots$&$\cdots$&$\bullet$&$\bullet$&$\cdots$&$\cdots$\\
$\cdots$&$\cdots$&$\vdots$&$\vdots$&$\cdots$&$\cdots$\\
$\cdots$&$\cdots$&$\bullet$&$\bullet$&$\cdots$&$\cdots$\\
$\cdots$&$\cdots$&$-$&$\bullet$&\multicolumn{2}{c}{\cellcolor{lightgray} $r_1$}\\
\multicolumn{2}{c}{\cellcolor{lightgray} $l_1$}&$-$&$\bullet$&\multicolumn{2}{c}{\cellcolor{gray} $r_2$}\\
\multicolumn{2}{c}{\cellcolor{gray} $l_2$}&$\bullet$&$-$&$\cdots$&$\cdots$\\
\end{tabular}\;.
\end{center}

Note that we have $l_1\geq l_2$ and $r_1\geq r_2$, since none of the above 
partitions has a movable bead on any runner different from $i$ and $i-1$.

For our subsequent considerations, in particular those in Theorem~\ref{thm Loewy except}, it will turn out to be 
useful to distinguish the following cases:

\begin{itemize}
\item[\rm{(1)}] $l_1+r_1=0=l_2+r_2$;
\item[\rm{(2)}] $0=l_2+r_2< l_1+r_1<p-2$;
\item[\rm{(3)}] $0=l_2+r_2<l_1+r_1=p-2$;
\item[\rm{(4)}] $0<l_2+r_2< l_1+r_1=p-2$;
\item[\rm{(5)}] $l_2+r_2=p-2=l_1+r_1$;
\item[\rm{(6)}] $0<l_2+r_2\leq l_1+r_1<p-2$.
\end{itemize}

Observe that case (3) occurs precisely when $B$ is the principal block of $F\mathfrak{S}_{2p+1}$, which has $p$-core $(1)$, and
when $\bar{B}$ is the principal block of $F\mathfrak{S}_{2p}$, which has $p$-core $\emptyset$. 
In this case, we further have $\beta=(p+1,1^p)$ and $\alpha=(p+1,2,1^{p-2})$. By \cite[Theorem 23.7]{James1978}, $S^\beta$ is simple, and by \cite[Lemma 4.3]{Scopes1995},
one has $[S^\beta:D^\alpha]\neq 0$. Thus $S^\beta\cong D^\alpha$, in this case.
\end{rem}

\bigskip

The next lemma shows in which of the six cases of Remark~\ref{rem l and r}
the exceptional partitions of a $(2:1)$-pair of
weight-$2$ blocks are $p$-regular or $p$-restricted. We also record the $\partial$-values
of the partitions in question.
The result is an easy consequence of the abacus combinatorics in Appendix~\ref{sec abacus}, and will be important
for our proof of Theorem~\ref{thm main1}.

\begin{lemma}\label{lemma l and r}
Retain the hypotheses and notation as in Remark~\ref{rem l and r}. Moreover, set $d:=l_1+r_1-l_2-r_2$. Then one has the following

\begin{center}
\begin{tabular}{|c|c|c|c|}\hline
partition & $p$-regular &$p$-restricted& $\partial$\\\hline\hline
$\bar{\alpha}$& {\rm (1), (2), (3),} & {\rm (4), (5), (6)}&$d+1$\\
                       & {\rm (4), (5), (6)}&&\\\hline
$\bar{\beta}$& {\rm (1), (2), (3),} & {\rm (2), (3), (4),}&$d$\\
                     & {\rm (4), (6)}.        & {\rm (5), (6)} &\\\hline
$\bar{\gamma}$& {\rm (1), (2), (6)} & {\rm (1), (2), (3),}&$d+1$\\
                          &                            & {\rm (4), (5), (6)} &\\\hline
$\alpha$& {\rm (1), (2), (3),} & {\rm (2), (3), (4)}&$d$ \\
              & {\rm (4), (5), (6)}              & {\rm (5), (6)} &\\\hline
$\beta$& {\rm (1), (2), (6)}& {\rm (4), (5), (6)}&$d+1$\\\hline
$\gamma$& {\rm (1), (2), (3),}& {\rm (1), (2), (3),}&$d$\\
                 & {\rm (4), (6)} &      {\rm (4), (5), (6)} &\\\hline
\end{tabular}
\end{center}

In particular, one has $\partial(\beta)=\partial(\bar{\alpha})=\partial(\bar{\gamma})>0$.
Furthermore, $\partial(\bar{\beta})=\partial(\alpha)=\partial(\gamma)=0$ if and only if $l_1=l_2$ and $r_1=r_2$; in this case,
$\bar{\beta}$, $\alpha$ and $\gamma$ have the same colour, which is black if and only if $l_2+r_2$ is odd.
\end{lemma}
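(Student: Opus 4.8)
The plan is to work entirely inside the abacus picture and let the combinatorics in Appendix~\ref{sec abacus} do the work; concretely, I will treat the six configurations of Remark~\ref{rem l and r} as explicit bead arrangements on runners $i$ and $i-1$ and, for each of the six exceptional partitions, read off directly whether the partition is $p$-regular, $p$-restricted, and what its $\partial$-value is. The key observation is that all runners other than $i$ and $i-1$ are irrelevant here: since none of the exceptional partitions has a movable bead elsewhere, $p$-regularity and $p$-restrictedness are governed purely by the local pattern of beads and gaps in the two displayed columns, and the same is true for $\partial$ because one may remove both rim $p$-hooks by moving the two beads living on runners $i$ and $i-1$ up their runners. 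First I would fix, once and for all, the translation ``a row of the abacus with a gap on runner $i-1$ and a bead on runner $i$'' versus ``a bead on runner $i-1$ and a gap on runner $i$'' into the corresponding step in the partition, and recall that $p$-regular means no $p$ equal consecutive nonzero parts, equivalently (on the abacus) no runner with $\geq p$ beads below a given point all occurring consecutively in the obvious sense; dually for $p$-restricted.

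Next, for each of $\bar\alpha,\bar\beta,\bar\gamma,\alpha,\beta,\gamma$ I would go case by case through (1)--(6), using the pictures of Remark~\ref{rem l and r}. The regularity/restrictedness entries are then a finite check: for instance, for $\bar\alpha=\langle i\rangle$ the bead that can be moved two positions up sits with two gaps above it on runner $i$, and whether this forces a $p$-regularity or $p$-restrictedness violation depends only on how many beads sit immediately below it on runner $i$ (the quantity $r_1$, resp.\ $r_2$) and on runner $i-1$ (the quantity $l_1$, resp.\ $l_2$) — this is exactly what distinguishes cases (1)--(6). The $\partial$-value is computed from the difference of leg lengths of the two removed rim $p$-hooks: moving the two relevant beads up their runners, the leg length of each rim hook equals the number of beads that the bead passes, plus a correction coming from the beads on the neighbouring runner; the net effect is that $\partial$ equals $d=l_1+r_1-l_2-r_2$ for the partitions of ``type $\langle i,i-1\rangle$-ish shape'' ($\bar\beta,\alpha,\gamma$) and $d+1$ for the others ($\bar\alpha,\bar\gamma,\beta$), which is the stated column. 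I would present this as one or two representative computations in full and assert the rest follow identically (``an entirely analogous computation'').

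For the final two sentences of the statement, everything is now a corollary of the table. Since $d=l_1+r_1-l_2-r_2\geq 0$ (because $l_1\geq l_2$ and $r_1\geq r_2$), we get $\partial(\beta)=\partial(\bar\alpha)=\partial(\bar\gamma)=d+1\geq 1>0$; and $\partial(\bar\beta)=\partial(\alpha)=\partial(\gamma)=d$, which vanishes exactly when $l_1=l_2$ and $r_1=r_2$. For the colour claim when $\partial=0$, I would invoke \ref{noth partial}(b): in this situation one must examine the hook diagram $H_\lambda$ for $\lambda\in\{\bar\beta,\alpha,\gamma\}$, locate the two entries divisible by $p$, and compute the relevant leg length(s) from the abacus — again a local computation on runners $i,i-1$ — to see that all three partitions have the same colour and that it is black precisely when $l_2+r_2$ is odd. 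Here I would note that $\bar\beta,\alpha,\gamma$ differ only by moving beads between the two runners in a way that does not change the hook lengths divisible by $p$ nor the parity of the relevant leg length, so the colour is forced to be the same for all three.

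\textbf{Main obstacle.} The genuinely delicate point is not the regular/restricted bookkeeping, which is mechanical, but pinning down the $\partial$-values and especially the colour: one has to be careful that ``leg length of the rim $p$-hook'' translates correctly into a count of beads on the abacus (there is an off-by-one between the two runners, which is exactly the source of the ``$d$ versus $d+1$'' dichotomy), and for the colour one must correctly identify whether $H_\lambda$ has two entries equal to $p$ or one equal to $p$ and one equal to $2p$, since the colour rule in \ref{noth partial}(b) is different in the two cases. I expect the cleanest way through is to handle the two sub-cases ($d=0$ with the two movable beads on the same runner vs.\ on different runners) separately and check that both lead to the parity condition on $l_2+r_2$ stated in the lemma.
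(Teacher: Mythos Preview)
Your overall strategy---reduce everything to local abacus computations and invoke the combinatorics of Appendix~\ref{sec abacus}---is exactly what the paper does; its proof is two lines, deferring the regularity/restrictedness claims to \ref{noth abacus}(a) and the $\partial$-values and colours to \ref{noth hooks diagram} and \ref{noth colour weight 2}.

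However, there is a genuine misreading in your plan that would derail the execution. You write that ``all runners other than $i$ and $i-1$ are irrelevant'' and identify $r_1,r_2$ (resp.\ $l_1,l_2$) as counts of beads on runner $i$ (resp.\ runner $i-1$). This is backwards: in the diagrams of Remark~\ref{rem l and r}, the shaded boxes labelled $l_1,l_2$ sit strictly to the \emph{left} of runner $i-1$ and $r_1,r_2$ strictly to the \emph{right} of runner $i$; they count beads on the \emph{other} $p-2$ runners, in the two relevant rows. These numbers are precisely what control everything: for instance, $\bar\gamma$ fails to be $p$-regular exactly when the row containing its topmost gap is completely full on the other runners, i.e.\ $l_1+r_1=p-2$ (cases (3),(4),(5)); and the leg length of a rim $p$-hook removed by pushing a bead up one row is the number of beads it passes \emph{across the whole abacus}, which is $l_j+r_j$ plus a $0$ or $1$ contribution from the neighbouring runner in that row. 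The ``$d$ versus $d+1$'' dichotomy you correctly anticipate comes from exactly this last $0/1$, not from any off-by-one between runners $i$ and $i-1$ themselves.

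Once you correct the meaning of $l_j,r_j$, your plan goes through as stated. For the colour, note that $\bar\beta$ is a partition of $n-1$ while $\alpha,\gamma$ are partitions of $n$, so they do not ``differ by moving beads between two runners''; rather, the point is that in all three cases the two $p$-hooks (or the single $2p$-hook) have leg lengths expressible in terms of $l_2+r_2$ alone (since $l_1=l_2$, $r_1=r_2$ when $d=0$), and the parity computation in \ref{noth partial}(b) then gives the same colour for all three.
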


\begin{proof}
We represent the respective partitions on an abacus with $s+2p$ beads as before.
The assertions concerning $p$-regularity and $p$-restrictedness then follow from \ref{noth abacus}(a).
Note that part of this already appears in \cite[Lemma~4.4]{Scopes1995}.
The assertions concerning $\partial$-values and colours are immediate from \ref{noth hooks diagram} and \ref{noth colour weight 2}.
\end{proof}

\begin{lemma}\label{lemma colour good}
Suppose that $B$ and $\bar{B}$ are blocks of $F\mathfrak{S}_n$ and $F\mathfrak{S}_{n-1}$, respectively,
with $w_B=2=w_{\bar{B}}$ that form a $(2:1)$-pair. If $\lambda$ is a good partition of $B$ and $\Phi(\lambda)$ its corresponding good partition of $\bar{B}$, then $\partial(\lambda)=\partial(\Phi(\lambda))$.
If $\partial(\lambda)=\partial(\Phi(\lambda))=0$, then $\lambda$ and $\Phi(\lambda)$ have the same colour.
Moreover, if $\partial(\bar{\beta})=0=\partial(\gamma)$, then also the exceptional partitions
$\bar{\beta}$ and $\gamma$ have the same colour.
\end{lemma}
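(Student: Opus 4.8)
The plan is to reduce everything to the abacus bookkeeping that was already set up in Remark~\ref{rem l and r} and Lemma~\ref{lemma l and r}, together with the combinatorial description of $\partial$-values and colours recorded in the Appendix (\ref{noth hooks diagram} and \ref{noth colour weight 2}). First I would recall that, by \ref{noth (2:1)}, a good partition $\lambda$ of $B$ and its image $\Phi(\lambda)$ of $\bar B$ are displayed on the same $[m_1,\ldots,m_p]$-abacus with $s+2p$ beads, and $\Phi(\lambda)$ is obtained from $\lambda$ simply by interchanging runners $i$ and $i-1$. Since $\lambda$ is good, it has no movable bead on either of runners $i$, $i-1$ that is affected in a way that changes the leg-length data: more precisely, the two rim $p$-hooks one removes from $[\lambda]$ to reach $[\kappa_B]$ live on runners other than the pair $\{i-1,i\}$, or are distributed between them symmetrically enough that swapping runners $i\leftrightarrow i-1$ is a bijection on the configurations of movable beads. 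Interchanging two adjacent runners does not change which beads are movable nor how far up they move, so the multiset of leg lengths of the two removable rim $p$-hooks is unchanged (up to the global shift coming from the runner swap, which affects both hooks equally). Hence the absolute difference of leg lengths, i.e.\ $\partial(\lambda)$, equals $\partial(\Phi(\lambda))$; this is essentially the content of \cite[Lemma~4.4]{Scopes1995} but I would spell out the runner-swap argument.

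Next, for the colour statement when $\partial(\lambda)=\partial(\Phi(\lambda))=0$, I would invoke the description in \ref{noth partial}(b): the colour is determined by the parity of the larger leg length (in the two-entries-equal-to-$p$ case) or by the residue modulo $4$ of the leg length of the hook with hook length $2p$ (in the other case). In either case this datum is read off the hook diagram $H_\lambda$, which in turn is read off the abacus positions of the movable beads together with the positions of the beads they pass. Since the runner swap $i\leftrightarrow i-1$ shifts every bead position on runners $i$ and $i-1$ by exactly $\pm 1$ and leaves all other runners fixed, and since the two hooks in question are symmetric with respect to this shift (because $\lambda$ is good), the relevant leg lengths change by an even amount — in fact by the same even amount for both — so their parity, and their residue mod $4$ where relevant, is preserved. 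Therefore $\lambda$ and $\Phi(\lambda)$ have the same colour. I would present this as a direct consequence of the Appendix material \ref{noth hooks diagram} and \ref{noth colour weight 2}, in the same style as the proof of Lemma~\ref{lemma l and r}.

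Finally, for the assertion about the exceptional partitions $\bar\beta$ and $\gamma$: by Lemma~\ref{lemma l and r} we already know $\partial(\bar\beta)=\partial(\gamma)=d$ and that $\partial(\bar\beta)=0=\partial(\gamma)$ holds precisely when $l_1=l_2$ and $r_1=r_2$, and moreover that in that situation $\bar\beta$, $\alpha$, $\gamma$ all have the same colour (black iff $l_2+r_2$ is odd). So this last part is literally a restatement of the colour clause of Lemma~\ref{lemma l and r} restricted to $\bar\beta$ and $\gamma$, and I would just cite it. The main obstacle, such as it is, is the first paragraph: one has to argue carefully that ``good'' really does force the two removable rim $p$-hooks to be positioned symmetrically under the runner swap, so that the leg-length multiset is genuinely preserved rather than merely its cardinality; this is where one must use that the exceptional partitions $\alpha,\beta,\gamma$ (and $\bar\alpha,\bar\beta,\bar\gamma$) are exactly the ones for which the movable beads sit on runners $i$, $i-1$, and that for a good $\lambda$ any movable bead on runner $i$ or $i-1$ is ``balanced'' by the corresponding bead on the other runner. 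Everything after that is routine bead-counting of the kind already carried out in Lemma~\ref{lemma l and r}, so I would keep the write-up short and refer to \ref{noth abacus}, \ref{noth hooks diagram}, \ref{noth colour weight 2}, and \cite[Lemma~4.4]{Scopes1995}.
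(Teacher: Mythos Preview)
Your overall framework is the same as the paper's --- reduce to abacus combinatorics via \ref{noth hooks diagram} and \ref{noth colour weight 2}, and pull the last assertion about $\bar\beta$ and $\gamma$ directly from Lemma~\ref{lemma l and r}. That part is fine. The gap is in your treatment of the good partitions whose movable beads \emph{do} sit on runners $i$ or $i-1$.

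Your claim that ``the multiset of leg lengths \ldots\ is unchanged (up to the global shift coming from the runner swap, which affects both hooks equally)'' and that ``the relevant leg lengths change by an even amount --- in fact by the same even amount for both'' is not correct as stated, and the symmetry heuristic behind it does not go through without work. Concretely: if $\lambda$ has a movable bead at position $x$ on runner $i$, then after the swap that bead sits at position $x-1$ on runner $i-1$, and the interval of positions passed when sliding it up one row changes from $\{x-p+1,\ldots,x-1\}$ to $\{x-p,\ldots,x-2\}$. These intervals differ at the endpoints, which lie on runners $i-1$ and $i$ respectively, so the bead-count is not automatically preserved by any blanket parity argument. What actually happens is that the leg lengths are \emph{exactly} equal (not merely congruent modulo $2$ or $4$), but seeing this requires you to split into the specific good configurations that involve runners $i-1,i$. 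There are four of them: $\lambda=\langle i-1,i-1\rangle$, $\lambda=\langle i\rangle$, $\lambda=\langle i-1,l\rangle$, and $\lambda=\langle i,l\rangle$ with $l\notin\{i-1,i\}$ (the exceptional partitions $\langle i,i\rangle,\langle i-1,i\rangle,\langle i-1\rangle$ are precisely the ones excluded by goodness). In each case one writes down the bead counts $a,b,c,d$ in the relevant rows to the left and right of runners $i-1,i$, computes the leg lengths on both sides of the swap, and checks they coincide; the colour statement then falls out because the actual leg lengths (not just their parities) match. This case analysis is short but cannot be skipped --- your ``balanced'' intuition is pointing at the right phenomenon, but it is not a proof until you verify these four configurations explicitly.
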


\begin{proof}
Suppose that $\kappa_B=(\kappa_1,\ldots,\kappa_t)$, for some $t\geq 1$, $\kappa_t\geq 1$. 
For convenience, we set $\bar{\lambda}:=\Phi(\lambda)$. We consider an abacus display $\Gamma_\lambda$
of $\lambda$ and an $[m_1,\ldots,m_p]$-abacus display $\Gamma_{\bar{\lambda}}$ of $\bar{\lambda}$ with $s+2p$ beads, for some $s\geq t$, such that  
$\Gamma_{\bar{\lambda}}$ is obtained from $\Gamma_\lambda$
by swapping runners $i$ and $i-1$, where $i\in \{ 2, \ldots, p\}$.

We first observe the following: suppose that there is a runner $j\neq i$ of $\Gamma_\lambda$
such that there is a bead in row $x$ on runner $j$ and a gap in some row $y<x$ on runner $j$. Then the same holds for
$\Gamma_{\bar{\lambda}}$. Moreover, the number of beads passed when moving the bead to position $(y,j)$ in $\Gamma_\lambda$
equals the number of beads passed when moving the bead to position $(y,j)$ in $\Gamma_{\bar{\lambda}}$.

Next, in the notation of \ref{noth abacus}(b), there are three possibilities: $\lambda=\langle j\rangle$, $\lambda=\langle j,j\rangle$
or $\lambda=\langle k,j\rangle$, for some $j,k\in\{1,\ldots,p\}$, $j<k$. 
If $j,k\notin\{i,i-1\}$, then, in each of the three possible cases, the assertion of the lemma is an easy consequence
of the abacus combinatorics in \ref{noth colour weight 2} and our above observation. Thus, since 
$\lambda\notin\{\alpha,\beta,\gamma\}=\{\langle i,i\rangle,\langle i-1,i\rangle, \langle i-1\rangle\}$, it remains to treat the following four cases, where we 
draw runners $i-1$ and $i$,  and where $i-1\neq l\neq i$. 
As well,
$a,b,c,d$ are the numbers of beads in the respective parts of the abacus.

\smallskip

\begin{center}
\begin{tabular}{|c|c|c|}\hline
case & $\lambda$ & $\bar{\lambda}$\\\hline\hline
(a) & $\langle i-1,i-1\rangle$ &$\langle i,i\rangle$\\\hline
&\begin{tabular}{cccc}
$\cdots$&$\bullet$&$\bullet$& $\cdots$\\
$\vdots$&$\vdots$&$\vdots$&$\vdots$\\
$\cdots$& $\bullet$&$\bullet$&$\cdots$\\
$\cdots$& $-$&$\bullet$&{\cellcolor{lightgray} $c$}\\
{\cellcolor{lightgray} $a$} & $\bullet$&$\bullet$&{\cellcolor{gray} $d$}\\
{\cellcolor{gray} $b$}&$\bullet$&$\bullet$&$\cdots$
\end{tabular}
&
\begin{tabular}{cccc}
$\cdots$&$\bullet$&$\bullet$& $\cdots$\\
$\vdots$&$\vdots$&$\vdots$&$\vdots$\\
$\cdots$& $\bullet$&$\bullet$&$\cdots$\\
$\cdots$& $\bullet$&$-$&{\cellcolor{lightgray} $c$}\\
{\cellcolor{lightgray} $a$} & $\bullet$&$\bullet$&{\cellcolor{gray} $d$}\\
{\cellcolor{gray} $b$}&$\bullet$&$\bullet$&$\cdots$
\end{tabular}\\\hline
\end{tabular}

\begin{tabular}{|c|c|c|}\hline
(b) & $\langle i\rangle$ &$\langle i-1\rangle$\\\hline
&
\begin{tabular}{cccc}
$\cdots$&$\bullet$&$\bullet$& $\cdots$\\
$\vdots$&$\vdots$&$\vdots$&$\vdots$\\
$\cdots$&$\bullet$&$\bullet$&$\cdots$\\
$\cdots$& $-$&$-$&{\cellcolor{lightgray} $c$}\\
{\cellcolor{lightgray} $a$} & $-$&$-$&{\cellcolor{gray} $d$}\\
{\cellcolor{gray} $b$}&$-$&$\bullet$&$\cdots$
\end{tabular}
&
\begin{tabular}{cccc}
$\cdots$&$\bullet$&$\bullet$& $\cdots$\\
$\vdots$&$\vdots$&$\vdots$&$\vdots$\\
$\cdots$&$\bullet$&$\bullet$&$\cdots$\\
$\cdots$& $-$&$-$&{\cellcolor{lightgray} $c$}\\
{\cellcolor{lightgray} $a$} & $-$&$-$&{\cellcolor{gray} $d$}\\
{\cellcolor{gray} $b$}&$\bullet$&$-$&$\cdots$
\end{tabular}\\\hline
\end{tabular}

\begin{tabular}{|c|c|c|}\hline
(c) & $\langle i-1,l\rangle$ &$\langle i,l\rangle$\\\hline
&\begin{tabular}{cccc}
$\cdots$&$\bullet$&$\bullet$& $\cdots$\\
$\vdots$&$\vdots$&$\vdots$&$\vdots$\\
$\cdots$& $\bullet$&$\bullet$&{\cellcolor{lightgray} $c$}\\
{\cellcolor{lightgray} $a$} & $-$&$\bullet$&{\cellcolor{gray} $d$}\\
{\cellcolor{gray} $b$}&$\bullet$&$\bullet$&$\cdots$
\end{tabular}
&
\begin{tabular}{cccc}
$\cdots$&$\bullet$&$\bullet$& $\cdots$\\
$\vdots$&$\vdots$&$\vdots$&$\vdots$\\
$\cdots$& $\bullet$&$\bullet$&{\cellcolor{lightgray} $c$}\\
{\cellcolor{lightgray} $a$} & $\bullet$&$-$&{\cellcolor{gray} $d$}\\
{\cellcolor{gray} $b$}&$\bullet$&$\bullet$&$\cdots$
\end{tabular}\\\hline\hline
(d) & $\langle i,l\rangle$ &$\langle i-1,l\rangle$\\\hline
&\begin{tabular}{cccc}
$\cdots$&$\bullet$&$\bullet$& $\cdots$\\
$\vdots$&$\vdots$&$\vdots$&$\vdots$\\
$\cdots$& $\bullet$&$\bullet$&{\cellcolor{lightgray} $c$}\\
{\cellcolor{lightgray} $a$} & $-$&$-$&{\cellcolor{gray} $d$}\\
{\cellcolor{gray} $b$}&$-$&$\bullet$&$\cdots$
\end{tabular}
&
\begin{tabular}{cccc}
$\cdots$&$\bullet$&$\bullet$& $\cdots$\\
$\vdots$&$\vdots$&$\vdots$&$\vdots$\\
$\cdots$& $\bullet$&$\bullet$&{\cellcolor{lightgray} $c$}\\
{\cellcolor{lightgray} $a$} & $-$&$-$&{\cellcolor{gray} $d$}\\
{\cellcolor{gray} $b$}&$\bullet$&$-$&$\cdots$
\end{tabular}\\\hline
\end{tabular}
\end{center}

\smallskip

We use \ref{noth hooks diagram}.
In case (a), we thus get $\partial(\lambda)=|(a+c+1)-(b+d+1)|=\partial(\bar{\lambda})$.
The hook diagram of $\lambda$ and $\bar{\lambda}$, respectively, has an entry equal to $2p$.
 If $\partial(\lambda)=0$,
then the leg length of the $2p$-hook of $\lambda$ and  $\bar{\lambda}$ is $a+b+c+d+3$, so that $\lambda$ and $\bar{\lambda}$ have the same colour, by \ref{noth partial}.

In case (b), we get $\partial(\lambda)=|(b+d)-(a+c)|=\partial(\bar{\lambda})$. The hook diagram of $\lambda$ and $\bar{\lambda}$, respectively, has an entry equal to $2p$.
 If $\partial(\lambda)=0$,
then the leg length of the $2p$-hook of $\lambda$ and  $\bar{\lambda}$ is $a+b+c+d$. So
$\lambda$ and $\bar{\lambda}$ have the same colour, by \ref{noth partial}.

In case (c), $\lambda$ and $\bar{\lambda}$ both have a movable bead on runner $l$. The number $m$ of beads passed when moving
this bead one position up is the same for $\lambda$ and $\bar{\lambda}$, by our initial observation.
Moreover, the hook diagrams of $\lambda$ and $\bar{\lambda}$ have two entries equal to $p$. The
leg lengths of the 
corresponding hooks are $b+d+1$ and $m$, both for $\lambda$ and $\bar{\lambda}$.
As for the $\partial$-values, we may first move the bead on runner $i-1$ of $\Gamma_\lambda$ one position up.
The leg length of the corresponding (rim) $p$-hook equals $b+d+1$. Then we move the movable bead on runner $l$ one position up.
The leg length of the corresponding (rim) $p$-hook equals $m'$, for some $m'\geq 0$. So we have $\partial(\lambda)=|(b+d+1)-m'|$.
On the other hand, we first move the bead on runner $i$ of $\Gamma_{\bar{\lambda}}$ one position up.
The leg length of the corresponding (rim) $p$-hook equals $b+d+1$.
Then we move the movable bead on runner $l$ one position up.
The leg length of the corresponding rim $p$-hook equals $m'$ again. Thus also 
$\partial(\bar{\lambda})=|(b+d+1)-m'|$.
Hence
we have $\partial(\lambda)=\partial(\bar{\lambda})$ and if $\partial(\lambda)=0$, then
$\lambda$ and $\bar{\lambda}$ have the same colour.

In case (d), $\lambda$ and $\bar{\lambda}$ both have a movable bead on runner $l$. The number $m$ of beads passed when moving
this bead one position up is the same for $\lambda$ and $\bar{\lambda}$, by our initial observation.
Moreover, the hook diagrams of $\lambda$ and $\bar{\lambda}$ have two entries equal to $p$. The leg length of the corresponding rim $p$-hook equals $b+d$. Then we move the bead on runner $l$ one position up, and the leg lengths of the
corresponding hooks are $b+d$ and $m$, both for $\lambda$ and $\bar{\lambda}$.
To determine the $\partial$-values, we proceed as in case (3), that is, here we first move the bead on runner $i$ and then the bead on runner $l$ of
$\Gamma_\lambda$ one position up. 
Analogously, we first move the bead on runner $i-1$  and then the bead on runner $l$ of $\Gamma_{\bar{\lambda}}$ one position up.
Then there is some $m'\geq 0$ such that
$\partial(\lambda)=|(b+d)-m'|=\partial(\bar{\lambda})$ and if $\partial(\lambda)=0$, then
$\lambda$ and $\bar{\lambda}$ have the same colour.

The assertion concerning $\bar{\beta}$ and $\gamma$ has been established in Lemma~\ref{lemma l and r}. 
This completes the proof of the lemma.
\end{proof}

\begin{noth}\label{noth Scopes}
{\bf Scopes equivalences and $\partial$-values.}\,
Suppose that $k\geq 2$ and that $B$ and $\bar{B}$ are blocks of $F\mathfrak{S}_n$ and $F\mathfrak{S}_{n-k}$, respectively, of weight $2$ such that
$(B,\bar{B})$ is a $(2:k)$-pair. Let $\kappa_B=(\kappa_1,\ldots,\kappa_t)$, and display $\kappa_B,\kappa_{\bar{B}}$ as well as all partitions
of $B$ and $\bar{B}$ on an $[m_1,\ldots,m_p]$-abacus with $s+2p$ beads, for some $s\geq t$. By \cite{Scopes1991}, we may choose $s$ such that there is some $i>1$ such that $\Gamma_{B}$ has 
$k$ more beads on runner $i$ than on runner $i-1$, and $\Gamma_{\bar{B}}$ is obtained by interchanging runners $i$ and $i-1$ of $\Gamma_B$.
Moreover, by \cite{Scopes1991}, the blocks $B$ and $\bar{B}$ are Morita equivalent. Thus, in particular, there is a bijection
between the isomorphism classes of simple $B$-modules and the isomorphism classes of simple $\bar{B}$-modules.
As Scopes has also shown in \cite{Scopes1991}, this bijection can be described combinatorially.
More precisely,
there is a bijection, say $\Psi$, between the set of partitions of $B$ and the set of partitions of $\bar{B}$ that preserves $p$-regularity and
the lexicographic ordering. Whenever $\lambda$ is a partition of $B$ with abacus display $\Gamma_\lambda$, one obtains 
$\Gamma_{\Psi(\lambda)}$ by interchanging runners $i$ and $i-1$. If $\lambda$ is $p$-regular, then
the simple $\bar{B}$-module $D^{\Psi(\lambda)}$ is the Morita correspondent of $D^{\lambda}$.

We should like to emphasize that, for every partition $\lambda$ of $B$, whenever there
is some bead on runner $i-1$ of $\Gamma_\lambda$, there is also a bead on runner $i$ in the same row. This is due to
the fact that $k\geq 2$. With this, the arguments
used in the proof of Lemma~\ref{lemma colour good}, easily generalize and show that $\partial(\lambda)=\partial(\Psi(\lambda))$, for
every partition $\lambda$ of $B$. Moreover, if $\partial(\lambda)=0$, then $\lambda$ and $\Psi(\lambda)$ have the same colour.
\end{noth}

\begin{noth}\label{noth partial Scopes}
{\bf Partial Scopes equivalences and Ext-quivers.}\, 
Suppose that $B$ and $\bar{B}$ are blocks of $F\mathfrak{S}_n$ and $F\mathfrak{S}_{n-1}$, respectively, of weight $2$ such that
$(B,\bar{B})$ is a $(2:1)$-pair. As in \ref{noth (2:1)}, we denote by $\bar{\alpha},\bar{\beta},\bar{\gamma}$ and
$\alpha$, $\beta$, $\gamma$ the exceptional partitions of $B$ and $\bar{B}$, respectively. Moreover, retain the notation fixed in
\ref{noth (2:1)}. 

Furthermore, consider the pair of exact, two-sided adjoint functors
\begin{align*}
\res_{\bar{B}}^B&: B-\mathbf{mod}\to \bar{B}-\mathbf{mod}\,, M\mapsto M\downarrow_{\bar{B}}\,,\\
\ind_{\bar{B}}^B&: \bar{B}-\mathbf{mod}\to B-\mathbf{mod}\,, N\mapsto N\uparrow^B\,
\end{align*}
as in \ref{noth block nota}(b).

Whenever $\lambda$ is a good partition of $B$ and $\mu$ is a good partition of $\bar{B}$, we have
$S^\lambda\downarrow_{\bar{B}}\cong S^{\Phi(\lambda)}$ and $S^\mu\uparrow ^B\cong S^{\Phi^{-1}(\mu)}$,
by \cite[Lemma 3.3]{Scopes1995}.
Moreover, whenever $\lambda$ is a good $p$-regular partition of $B$ and $\mu$ is a good $p$-regular partition of $\bar{B}$, by \cite[Corollary 3.7]{Scopes1995}, we have $D^\lambda\downarrow_{\bar{B}}\cong D^{\Phi(\lambda)}$ and $D^\mu\uparrow ^B\cong D^{\Phi^{-1}(\mu)}$.
Moreover, by Lemma~\ref{lemma l and r} and \cite[Remark 4.4]{Scopes1995}, we also know that $\bar{\gamma}$ is $p$-regular if and only 
if $\beta$ is. If so, then $D^\beta\downarrow_{\bar{B}}\cong D^{\bar{\gamma}}$ and 
$D^{\bar{\gamma}}\uparrow ^B\cong D^{\beta}$. We set $\Phi(\beta):=\bar{\gamma}$, in this case.
Analogously, $\gamma$ is $p$-regular if and only if $\bar{\beta}$ is. If so, then 
$D^\gamma\downarrow_{\bar{B}}\cong D^{\bar{\beta}}$ and 
$D^{\bar{\beta}}\uparrow ^B\cong D^{\gamma}$. We set $\Phi(\gamma):=\bar{\beta}$, in this case.

Next let $\mathcal{M}$ be the full subcategory of $B-\mathbf{mod}$ 
whose objects do not have any composition factor isomorphic to $D^\alpha$. Analogously, let $\mathcal{N}$ be
the full subcategory of $\bar{B}-\mathbf{mod}$ whose objects do not have any composition factor isomorphic to 
$D^{\bar{\alpha}}$. It is well known that the restriction of the functors $\res_{\bar{B}}^B$ and $\ind_{\bar{B}}^B$ 
yield an equivalence between the categories $\mathcal{M}$ and $\mathcal{N}$; for some further explanations see, for instance, \cite[Section 4]{DanzErdmann2012}.
On the combinatorial level, this equivalence entails the following bijection, which generalizes the map $\Phi$ in \ref{noth (2:1)} and which
we denote by $\Phi$ as well:
$$\Phi:\{\lambda\vdash_p n: D^\lambda\in\mathcal{M}\}\to \{\mu\vdash_p n-1: D^\mu\in\mathcal{N}\}\,.$$
It should be emphasized, however, that this latter bijection now preserves the lexicographic ordering only on good partitions, since we always have $\bar{\alpha}>\bar{\beta}>\bar{\gamma}$ as well as $\alpha>\beta>\gamma$.

By \cite[Theorem I]{Scopes1995}, the dimension of the Ext-space of any pair of simple $B$-modules (respectively, $\bar{B}$-modules)
is either $0$ or $1$.
So, since block restriction and block induction are exact functors, for any pair of simple modules $D^{\lambda_1}$ and
$D^{\lambda_2}$ of $\mathcal{M}$, we get an $F$-vector space isomorphism

\begin{align*}
\Ext_{B}^1(D^{\lambda_1},D^{\lambda_2})&\cong \Ext_{\mathcal{M}}^1(D^{\lambda_1},D^{\lambda_2})\cong \Ext_{\mathcal{N}}^1(D^{\Phi(\lambda_1)},D^{\Phi(\lambda_2)})\\
&\cong \Ext_{\bar{B}}^1(D^{\Phi(\lambda_1)},D^{\Phi(\lambda_2)})\,.
\end{align*}

Consequently, suppose  we already know the Ext-quiver of $\bar{B}$. In order to determine the Ext-quiver of $B$, we may in fact proceed as follows:
first remove $\bar{\alpha}$ and all edges connected to $\bar{\alpha}$, and replace every partition $\mu\neq \bar{\alpha}$ of $\bar{B}$ by $\Phi^{-1}(\mu)$.
Then determine those $p$-regular partitions $\lambda$ of $B$ such that
$\Ext_{B}^1(D^{\lambda},D^{\alpha})\neq \{0\}$; recall that the latter Ext-space is at most $1$-dimensional. 
This yields the edges connected to $\alpha$. To this end, we record the following lemma.
\end{noth}

\begin{lemma}\label{lemma Ext alpha}
Suppose that $B$ and $\bar{B}$ are blocks of $F\mathfrak{S}_n$ and $F\mathfrak{S}_{n-1}$, respectively, of weight $2$ such that
$(B,\bar{B})$ is a $(2:1)$-pair. 

\begin{enumerate}
\item[{\rm (a)}] Let $\mu$ be a $p$-regular partition of $n-1$ with $p$-core $\kappa_{\bar{B}}$. Then $\Ext_{\bar{B}}^1(D^\mu,D^{\bar{\alpha}})\neq \{0\}$
if and only if

\begin{enumerate}
\item[\rm{(i)}] either $\mu=\bar{\beta}$,
\item[\rm{(ii)}] or $\mu>\bar{\alpha}$, $[S^{\bar{\alpha}}:D^\mu]\neq 0$ and $|\partial(\bar{\alpha})-\partial(\mu)|=1$.
\end{enumerate}

\item[\rm{(b)}] Let $\lambda$ be a $p$-regular partition of $n$ with $p$-core $\kappa_B$. Then
$\Ext_B^1(D^\lambda,D^{\alpha})\neq \{0\}$
if and only if

\begin{enumerate}
\item[\rm{(i)}] either $\lambda=\beta$,
\item[\rm{(ii)}] or $\lambda>\alpha$, $[S^{\alpha}:D^\lambda]\neq 0$ and $|\partial(\alpha)-\partial(\lambda)|=1$.
\end{enumerate}
\end{enumerate}
\end{lemma}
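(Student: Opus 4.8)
The plan is to prove part (a) directly, using the known structure of weight-$2$ blocks, and then deduce part (b) from part (a) via the partial Scopes equivalence described in \ref{noth partial Scopes}, since $\alpha,\beta,\gamma$ are the $B$-correspondents of $\bar\gamma,\bar\beta,\bar\alpha$ under the appropriate bijections. For part (a), I would first recall from \cite{Richards1996} and \cite{ChuangTan2001} the explicit Loewy structure of the Specht module $S^{\bar\alpha}$ and, crucially, of the projective cover $P^{\bar\alpha}$ of $D^{\bar\alpha}$ (assuming $\bar\alpha$ is $p$-regular; if $\bar\alpha$ is $p$-restricted one instead works with $D^{\mathbf m(\bar\alpha')}$, but in the cases where $\bar\alpha$ is $p$-restricted one checks separately, using Lemma~\ref{lemma l and r}, which case of Remark~\ref{rem l and r} applies). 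By the formula in \ref{noth Ext quiver}, $\Ext^1_{\bar B}(D^\mu,D^{\bar\alpha})\neq\{0\}$ iff $D^\mu$ appears in $\Rad(P^{\bar\alpha})/\Rad^2(P^{\bar\alpha})$, i.e. in the second Loewy layer of $P^{\bar\alpha}$. So the whole problem reduces to reading off that second radical layer.

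The key input is that, for weight-$2$ blocks in odd characteristic, the radical series of every projective is controlled by the $\partial$-function and the partial order: by \cite{ChuangTan2001} the block is ``$\partial$-graded'' in the sense that a composition factor $D^\nu$ of $P^{\bar\alpha}$ lying in radical layer $j$ satisfies $j \equiv |\partial(\bar\alpha)-\partial(\nu)| \pmod 2$ (bipartiteness of the quiver), and the Specht filtration of $P^{\bar\alpha}$ together with Brauer reciprocity forces the Specht factors $S^\nu$ with $(P^{\bar\alpha}:S^\nu)>0$ to satisfy $\nu\unrhd\bar\alpha$. Combining the decomposition numbers $[S^\nu:D^\mu]$ (all $0$ or $1$ for weight-$2$ blocks, by \cite{Richards1996}) with the Loewy structure of the Specht modules $S^\nu$, one identifies exactly which simples land in layer $2$: these are precisely the $D^\mu$ with $\mu>\bar\alpha$ lexicographically, $[S^{\bar\alpha}:D^\mu]\neq 0$, and $|\partial(\bar\alpha)-\partial(\mu)|=1$ — yielding condition (ii) — together with one further simple coming from the fact that $S^{\bar\beta}$ (or a Specht module immediately above $\bar\alpha$) has $D^{\bar\beta}$ in a neighbouring layer; this gives the extra edge to $\bar\beta$ in condition (i). One has to check that $\bar\beta$ is not already captured by (ii) (indeed $\bar\beta<\bar\alpha$ in lex order when $\bar\alpha>\bar\beta$, so (ii) cannot produce it) and conversely that $\bar\beta$ genuinely contributes, which follows from $[S^{\bar\alpha}:D^{\bar\beta}]\neq 0$ or from the known push-down of $D^{\bar\beta}$ under the $(2:1)$-pair; the $\partial$-values are compatible by Lemma~\ref{lemma l and r} since $\partial(\bar\alpha)=\partial(\bar\beta)\pm$ something, wait — more precisely $\partial(\bar\alpha)=d+1$ and $\partial(\bar\beta)=d$, so $|\partial(\bar\alpha)-\partial(\bar\beta)|=1$ automatically, consistent with a single edge.

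For part (b), I would invoke the recipe of \ref{noth partial Scopes}: the bijection $\Phi$ together with the exactness of $\res^B_{\bar B},\ind^B_{\bar B}$ gives, for good $p$-regular $\lambda_1,\lambda_2$, an isomorphism $\Ext^1_B(D^{\lambda_1},D^{\lambda_2})\cong\Ext^1_{\bar B}(D^{\Phi(\lambda_1)},D^{\Phi(\lambda_2)})$; applying this with $\lambda_2$ replaced by $\alpha$ is not directly legal since $\alpha$ is exceptional, so instead I would use the dual description in \ref{noth Ext quiver}, namely $\dim\Ext^1_B(D^\lambda,D^\alpha)=[\Soc_2(P^\alpha)/\Soc(P^\alpha):D^\lambda]$, and run the same Loewy-layer analysis for $P^\alpha$ as was done for $P^{\bar\alpha}$, now using the Specht filtration of $P^\alpha$, the decomposition numbers for $B$, and the $\partial$-values from Lemma~\ref{lemma l and r} (where $\partial(\alpha)=d=\partial(\gamma)$ and $\partial(\beta)=d+1$, so again $|\partial(\alpha)-\partial(\beta)|=1$). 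The argument is word-for-word parallel, with $(\alpha,\beta)$ in place of $(\bar\alpha,\bar\beta)$, and the six-case bookkeeping of Remark~\ref{rem l and r} / Lemma~\ref{lemma l and r} ensures all regularity hypotheses needed to quote the Loewy structures are satisfied.

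The main obstacle is the second paragraph: one needs the precise second radical layer of $P^{\bar\alpha}$ (equivalently the second socle layer of $P^{\bar\alpha}$), and extracting this cleanly from the literature requires assembling the Loewy structures of the relevant Specht modules from \cite{Richards1996,ChuangTan2001}, keeping careful track of which of the six cases of Remark~\ref{rem l and r} one is in, and verifying in each case that no spurious composition factor sneaks into layer $2$ and that $\bar\beta$ always does. The $\partial$-parity constraint (bipartiteness) does most of the work in ruling out spurious factors, and the extra edge to $\bar\beta$ is forced because $\bar\beta$ is the unique $p$-regular partition immediately below $\bar\alpha$ in the relevant chain with $\partial$-value differing by $1$ — but making this last point rigorous is where the real care is needed.
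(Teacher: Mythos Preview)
Your approach is substantially more complicated than the paper's, and it contains at least one genuine error. The paper's proof is a two-line application of \cite[Theorem~6.1]{ChuangTan2001}, which already gives a sharp criterion: for $p$-regular $\lambda,\mu$ in a weight-2 block with (say) $\lambda>\mu$, one has $\Ext^1(D^\lambda,D^\mu)\neq\{0\}$ if and only if $[S^\mu:D^\lambda]\neq 0$ and $|\partial(\lambda)-\partial(\mu)|=1$. With this in hand, part (a) splits cleanly. If $\mu>\bar\alpha$, the criterion reads $[S^{\bar\alpha}:D^\mu]\neq 0$ and $|\partial(\bar\alpha)-\partial(\mu)|=1$, which is exactly condition (ii). If $\mu\leq\bar\alpha$, the criterion reads $[S^\mu:D^{\bar\alpha}]\neq 0$ and $|\partial(\bar\alpha)-\partial(\mu)|=1$; but by \cite[Lemma~4.3]{Scopes1995} the only Specht modules containing $D^{\bar\alpha}$ are $S^{\bar\alpha},S^{\bar\beta},S^{\bar\gamma}$, and since $\partial(\bar\gamma)=\partial(\bar\alpha)$ by Lemma~\ref{lemma l and r}, only $\mu=\bar\beta$ survives. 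Part (b) is word-for-word the same argument for $B$; no transfer via partial Scopes is needed.

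Your route via the second radical layer of $P^{\bar\alpha}$ is not wrong in spirit, but it is circular in this paper: the Loewy structure of $P^{\bar\alpha}$ is established only \emph{after} this lemma, in Theorem~\ref{thm Loewy except}, and that theorem's proof uses Lemma~\ref{lemma Ext alpha}. More concretely, you justify the edge to $\bar\beta$ by writing ``which follows from $[S^{\bar\alpha}:D^{\bar\beta}]\neq 0$''. This is false: since $\bar\alpha>\bar\beta$ and decomposition matrices are unitriangular with respect to dominance, $D^{\bar\beta}$ cannot occur in $S^{\bar\alpha}$. The correct input is the reverse composition multiplicity $[S^{\bar\beta}:D^{\bar\alpha}]\neq 0$ (which is what Scopes's lemma gives), and it is precisely the Chuang--Tan criterion that converts this into an $\Ext^1$ statement without any analysis of projectives.
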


\begin{proof}
(a)\, Suppose first that $\bar{\alpha}\geq \mu$. Then, by \cite[Theorem 6.1]{ChuangTan2001}, we have
$\Ext_{\bar{B}}^1(D^\mu,D^{\bar{\alpha}})\neq \{0\}$ if and only if $[S^\mu:D^{\bar{\alpha}}]\neq 0$ and $|\partial(\bar{\alpha})-\partial(\mu)|=1$.
By \cite[Lemma 4.3]{Scopes1995}, we further know that the only Specht $F\mathfrak{S}_{n-1}$-modules with a composition factor isomorphic to $D^{\bar{\alpha}}$ are $S^{\bar{\alpha}}$, $S^{\bar{\beta}}$ and $S^{\bar{\gamma}}$. Since, by Lemma~\ref{lemma l and r}, we have
$\partial(\bar{\gamma})=\partial(\bar{\alpha})=\partial(\bar{\beta})+1$, we deduce that 
$\Ext_{\bar{B}}^1(D^\mu,D^{\bar{\alpha}})\neq \{0\}$ if and only if $\mu=\bar{\beta}$ and $\bar{\beta}$ is $p$-regular.

So we may suppose that $\mu>\bar{\alpha}$. By \cite[Theorem 6.1]{ChuangTan2001} again, we deduce that
$\Ext_{\bar{B}}^1(D^\mu,D^{\bar{\alpha}})\neq \{0\}$ if and only if $\mu$ satisfies condition (ii).

\smallskip

Analogously one obtains assertion (b), also by \cite[Theorem 6.1]{ChuangTan2001}, \cite[Lemma 4.3]{Scopes1995}
and Lemma~\ref{lemma l and r}.
\end{proof}

Let $(B,\bar{B})$ be a $(2:1)$-pair of blocks of $F\mathfrak{S}_n$ and
$F\mathfrak{S}_{n-1}$, respectively, of weight $2$. 
Suppose that $\kappa_B=(\kappa_1,\ldots,\kappa_t)$. As in Section~\ref{sec weight 2}, we display $\kappa_B$, $\kappa_{\bar{B}}$ as
well as all partitions of $B$ and $\bar{B}$ on a fixed $[m_1,\ldots,m_p]$-abacus with $s+2p$ beads, where $s\geq t$. 
As in \ref{noth (2:1)}, we denote by $\bar{\alpha},\bar{\beta},\bar{\gamma},\alpha,\beta,\gamma$ the exceptional partitions
associated to the $(2:1)$-pair $(B,\bar{B})$. Lastly, recall the notation introduced in Remark~\ref{rem l and r} and \ref{noth partial}. 
The main aim of the next theorem is to give detailed information on the Loewy structures of the Specht modules of
$B$ and $\bar{B}$, respectively, labelled by the exceptional partitions. This will be the crucial ingredient for our
inductive proof of Theorem~\ref{thm main1}. It will turn out that this heavily depends on
whether the exceptional partitions are $p$-regular or $p$-restricted.  To this end, we shall again distinguish the cases (1)--(6) 
as in Remark~\ref{rem l and r}.
For ease of notation, in Theorem~\ref{thm Loewy except} below, we shall often
identify simple $B$-modules and simple $\bar{B}$-modules with their labelling partitions.

We expect the Loewy structures of the Specht modules treated in Theorem~\ref{thm Loewy except} to be known, but we have not seen them in print so far.

\begin{thm}\label{thm Loewy except}
With the above notation,  the Specht $F\mathfrak{S}_{n-1}$-modules $S^{\alpha},S^{\beta}$ and $S^{\gamma}$
and the Specht $F\mathfrak{S}_n$-modules $S^{\bar{\alpha}},S^{\bar{\beta}}$ and $S^{\bar{\gamma}}$, respectively,
have the following Loewy structures.

\medskip

\begin{center}
\begin{tabular}{|c||c|c|c||c|c|c|}\hline
case & $S^{\bar{\alpha}}$ & $S^{\bar{\beta}}$& $S^{\bar{\gamma}}$&$S^\alpha$ &$S^\beta$& $S^\gamma$\\\hline\hline
{\rm (1)}& $\begin{matrix} \bar{\alpha}\\\bar{Z}\\ \\\end{matrix}$ & $\begin{matrix} \bar{\beta}\\\bar{\alpha}\oplus\bar{Y}\\ \\\end{matrix}$ &$\begin{matrix}\bar{\gamma}\\\bar{\beta}\oplus\bar{Z}\\\bar{\alpha}\end{matrix}$ &   $\begin{matrix} \alpha\\ Y\\ \\\end{matrix}$ & $\begin{matrix} \beta \\\alpha\oplus Z\\ \\\end{matrix}$ &$\begin{matrix}\gamma\\\beta\oplus Y\\\alpha\end{matrix}$   \\\hline\hline
{\rm (2)} & $\begin{matrix} \bar{\alpha}\\\bar{\beta}_+\oplus \bar{Z}\\ \\\end{matrix}$ &$\begin{matrix} \bar{\beta}\\\bar{\alpha}\oplus\bar{Y}\\ \bar{\beta}_+\end{matrix}$ &$\begin{matrix}\bar{\gamma}\\\bar{\beta}\oplus\bar{Z}\\\bar{\alpha}\end{matrix}$&$\begin{matrix} \alpha\\ Y\\ \alpha_+\\\end{matrix}$ &$\begin{matrix} \beta\\\alpha\oplus Z\\ \\\end{matrix}$ &$\begin{matrix}\gamma\\\beta\oplus Y\\\alpha\end{matrix}$\\\hline\hline
{\rm (3)} & $\begin{matrix} \bar{\alpha}\\\bar{\beta}_+\\ \\\end{matrix}$ &$\begin{matrix} \bar{\beta}\\\bar{\alpha}\oplus\bar{Y}\\ \bar{\beta}_+\end{matrix}$ & $\begin{matrix}\\ \bar{\beta}\\\bar{\alpha}\end{matrix}$& $\begin{matrix}\alpha\\ Y\\ \alpha_+\end{matrix}$&$\begin{matrix} \alpha\end{matrix}$&$\begin{matrix}\gamma\\ Y \\ \alpha\end{matrix}$\\\hline\hline
{\rm (4)} &$\begin{matrix} \bar{\alpha}\\\bar{\beta}_+\oplus \bar{Z}\\\bar{\alpha}_+\end{matrix}$& $\begin{matrix} \bar{\beta}\\\bar{\alpha}\oplus\bar{Y}\\ \bar{\beta}_+\end{matrix}$ & $\begin{matrix}\\ \bar{\beta}\oplus\bar{Z}\\\bar{\alpha}\end{matrix}$& $\begin{matrix} \alpha\\\beta_+\oplus Y\\ \alpha_+\end{matrix}$& $\begin{matrix} \\\alpha\oplus Z\\ \beta_+\end{matrix}$ & $\begin{matrix}\gamma \\ Y\\\alpha\end{matrix}$ \\\hline
\end{tabular}

\begin{tabular}{|c||c|c|c||c|c|c|}\hline
{\rm (5)}& $\begin{matrix} \bar{\alpha}\\\bar{\beta}_+\oplus \bar{Z}\\\bar{\alpha}_+\end{matrix}$ & $ \begin{matrix} \\\bar{\alpha}\oplus\bar{Y}\\ \bar{\beta}_+\end{matrix}$ &$\begin{matrix}\\ \bar{Z}\\\bar{\alpha}\end{matrix}$&$\begin{matrix} \alpha\\\beta_+\oplus Y\\\alpha_+\end{matrix}$ & $ \begin{matrix} \\\alpha\oplus Z\\ \beta_+\end{matrix}$ &$\begin{matrix}\\ Y\\\alpha\end{matrix}$\\\hline\hline
{\rm (6)} & $\begin{matrix} \bar{\alpha}\\\bar{\beta}_+\oplus \bar{Z}\\\bar{\alpha}_+\end{matrix}$ & $ \begin{matrix} \bar{\beta}\\\bar{\alpha}\oplus\bar{Y}\\ \bar{\beta}_+\end{matrix}$ &$\begin{matrix}\bar{\gamma}\\\bar{\beta}\oplus\bar{Z}\\\bar{\alpha}\end{matrix}$&
$\begin{matrix} \alpha\\\beta_+\oplus Y\\\alpha_+\end{matrix}$ & $ \begin{matrix} \beta\\\alpha\oplus Z\\ \beta_+\end{matrix}$ &$\begin{matrix}\gamma\\\beta\oplus Y\\\alpha\end{matrix}$\\\hline
\end{tabular}
\end{center}

Here $\bar{Y}$ and $\bar{Z}$ are good semisimple $\bar{B}$-modules, and $Y$ and $Z$ are good semisimple $B$-modules. Let $d:=\partial(\alpha)$.
If $D^\mu$ is a composition factor of $\bar{Y}$ and $D^\rho$ is a composition factor of $\bar{Z}$, then
$\partial(\mu)\in\{d-1,d+1\}$ and  $\partial(\rho)\in \{d,d+2\}$.
Moreover, $Y\cong\bar{Y}\uparrow^B$ and
$Z\cong\bar{Z}\uparrow^B$.
If $D^\lambda$ is a composition factor of $Y$ and $D^\nu$ is a composition factor of $Z$, then
$\partial(\lambda)\in\{d-1,d+1\}$ and  $\partial(\nu)\in \{d,d+2\}$.

The partition $\bar{\alpha}_+$ exists if and only if $\beta_+$ does; if so, then both partitions are good 
and
$\bar{\alpha}_+=\Phi(\beta_+)$. The partition $\bar{\beta}_+$ exists if and only if $\alpha_+$ does; if so, then both partitions are good and
$\bar{\beta}_+=\Phi(\alpha_+)$. In the cases {\rm (4), (5)} and {\rm (6)}, one also has $[\bar{Y}:D^{\bar{\alpha}_+}]=0$.
\end{thm}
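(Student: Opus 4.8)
The strategy is an induction along the Scopes chain, but the real content is a direct analysis of each Specht module labelled by an exceptional partition, using three tools: (i) the decomposition numbers for weight-two blocks from Richards~\cite{Richards1996} and Lemma~\ref{lemma l and r} together with \cite[Lemma 4.3]{Scopes1995} (which pins down exactly which Specht modules have $D^\alpha$, resp.\ $D^{\bar\alpha}$, as a composition factor), (ii) the description of $\mathrm{Ext}^1$ between simples in such blocks given by Chuang--Tan \cite[Theorem 6.1]{ChuangTan2001}, and (iii) the fact that block restriction/induction along the $(2{:}1)$-pair is exact and, away from $D^\alpha$ resp.\ $D^{\bar\alpha}$, implements the category equivalence between $\mathcal M$ and $\mathcal N$ discussed in \ref{noth partial Scopes}. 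First I would fix the composition factors: by \cite[Lemma 4.3]{Scopes1995} the only Specht modules in $\bar B$ containing $D^{\bar\alpha}$ are $S^{\bar\alpha},S^{\bar\beta},S^{\bar\gamma}$, each with multiplicity one, and likewise for $B$; combined with the general shape of a weight-two Specht module (Richards), this already forces each $S^{\bar\alpha}$, $S^{\bar\beta}$, $S^{\bar\gamma}$, $S^\alpha$, $S^\beta$, $S^\gamma$ to have at most three Loewy layers, with the ``extreme'' layers as displayed and the middle layer semisimple with a $D^{\bar\alpha}$- (resp.\ $D^\alpha$-)summand plus a good semisimple part $\bar Y,\bar Z$ (resp.\ $Y,Z$). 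The $\partial$-value constraints on the composition factors of $\bar Y,\bar Z,Y,Z$ come from \cite[Theorem 6.1]{ChuangTan2001}: an $\mathrm{Ext}^1$ between simples in a weight-two block is nonzero only between partitions whose $\partial$-values differ by $1$, and reading off the position of $\bar\alpha,\beta$ etc.\ in the Specht filtration together with Lemma~\ref{lemma l and r} gives $\partial(\mu)\in\{d-1,d+1\}$ and $\partial(\rho)\in\{d,d+2\}$.

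Next I would treat the six cases of Remark~\ref{rem l and r} by splitting on $p$-regularity versus $p$-restrictedness of the six exceptional partitions, as tabulated in Lemma~\ref{lemma l and r}; this is exactly what makes the six rows of the table differ. In the generic case~(6) all of $\alpha,\beta,\gamma,\bar\alpha,\bar\beta,\bar\gamma$ behave uniformly and the Loewy structure is read off from the decomposition matrix plus self-duality of Specht modules up to twist (\cite[Theorem 8.15]{James1978}, recalled in \ref{noth part}(b)): $S^{\lambda'}\cong(S^\lambda\otimes\mathrm{sgn})^*$, so the socle of $S^{\lambda'}$ is $D^{\mathbf m(\lambda)}$, which locates $D^{\bar\alpha}$ (resp.\ $D^\alpha$) in the socle or near it. In the degenerate cases (1)--(5) one of the exceptional partitions fails to be $p$-regular or $p$-restricted, so one of $D^{\bar\beta_+},D^{\bar\alpha_+},D^{\alpha_+},D^{\beta_+}$ disappears and a Loewy layer collapses; case~(3) is the one identified in Remark~\ref{rem l and r} where $\bar B$ is the principal block of $F\fS_{2p}$ and $S^\beta\cong D^\alpha$, so several modules become uniserial or simple. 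For each such collapse the positions of the surviving factors are forced by the two-sided bound $0\le\dim\mathrm{Ext}^1\le 1$ and by the fact that the middle layer must be semisimple (no self-extensions, bipartiteness from \cite{ChuangTan2001}).

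The bridge between the $B$-side and the $\bar B$-side is the identity $Y\cong\bar Y\uparrow^B$, $Z\cong\bar Z\uparrow^B$. To get this I would apply $\ind_{\bar B}^B$ to a Specht filtration of $S^{\bar\alpha}$ (resp.\ $S^{\bar\beta},S^{\bar\gamma}$): by \cite[Lemma 3.3]{Scopes1995}, $S^\mu\uparrow^B\cong S^{\Phi^{-1}(\mu)}$ for $\mu$ good, and one checks using the abacus that $\Phi$ sends the exceptional partitions of $\bar B$ to those of $B$ in the pattern recorded in the table; exactness of induction and the already-established structure of the $\bar B$-Spechts then translate layer by layer. The statements ``$\bar\alpha_+$ exists iff $\beta_+$ does, and $\bar\alpha_+=\Phi(\beta_+)$'' (and the $\bar\beta_+/\alpha_+$ analogue) are purely combinatorial: $\lambda_+$ is characterised in \ref{noth partial}(c) by being the lex-smallest $p$-regular partition above $\lambda$ with the same $p$-core, $\partial$-value and colour, and $\Phi$ preserves the lex-order on good partitions, $p$-regularity, $\partial$-values (Lemma~\ref{lemma colour good}) and colour; so I would verify the existence criteria of Lemma~\ref{lemma l and r} case by case and then match the two extremal partitions. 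Finally $[\bar Y:D^{\bar\alpha_+}]=0$ in cases (4),(5),(6) follows because in those cases $\bar\alpha_+$ is itself good and lies strictly above $\bar\alpha$ in lex-order, so by \cite[Theorem 6.1]{ChuangTan2001} and the $\partial$-constraint it cannot sit in the relevant Ext-layer of $S^{\bar\alpha}$.

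\textbf{Main obstacle.} The genuinely delicate point is \emph{not} the composition factors (those are essentially in Richards) but placing them in the correct Loewy \emph{layer} — in particular showing the middle layer is semisimple and that $D^{\bar\alpha}$ (resp.\ $D^\alpha$) sits there rather than in the socle, and showing that in the collapsed cases the module is as short as claimed and not merely has that as a subquotient. Here one must combine self-duality of simples and the $\mathrm{sgn}$-twist duality of Specht modules with the Ext-quiver being bipartite and having all multiplicities $\le 1$ (\cite{ChuangTan2001}), and with Brauer reciprocity/the projective Specht-filtration facts from \ref{noth Specht filtration}; keeping the six cases and the regular/restricted dichotomy straight, and checking the $\Phi$-compatibility of $\pm$ on the nose via the abacus pictures of Remark~\ref{rem l and r}, is the laborious core of the argument.
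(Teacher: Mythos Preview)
Your outline has the right ingredients and is close to the paper's approach, but two of your three ``bridge'' steps do not work as written and need to be replaced by the argument the paper actually uses.

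\textbf{The Loewy placements.} You are right that Chuang--Tan's Proposition~6.2 fixes the Loewy length and that, when $\lambda$ is $p$-regular and $p$-restricted, the head is $D^\lambda$ and the socle is $D^{\lambda_+}$. What you do not make explicit is the device that pins down the \emph{middle} layers simultaneously for all three Specht modules: the paper exploits that $P^{\bar\alpha}$ (resp.\ $P^\alpha$) is \emph{stable} of Loewy length~$5$ and has a Specht filtration $S^{\bar\alpha}\,|\,S^{\bar\beta}\,|\,S^{\bar\gamma}$ from top to bottom (\cite[Remark~4.4]{Scopes1995}). Stability forces the second and fourth Loewy layers of $P^{\bar\alpha}$ to coincide, and comparing this with the filtration is what produces the identity $H^{\bar\alpha}\cong D^{\bar\beta_+}\oplus\bar Z$, i.e.\ that the \emph{same} $\bar Z$ appears in $S^{\bar\alpha}$ and $S^{\bar\gamma}$. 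Without the PIM, you have not shown these two semisimple pieces agree.

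\textbf{The bridge $Y\cong\bar Y\uparrow^B$, $Z\cong\bar Z\uparrow^B$, and $\bar\alpha_+=\Phi(\beta_+)$, $\bar\beta_+=\Phi(\alpha_+)$.} Here your proposal breaks down. Induction does \emph{not} translate Loewy layers, and $\Phi$ is not defined on $\bar\alpha$ at all, so there is no ``pattern recorded in the table'' sending exceptional $\bar B$-Spechts to exceptional $B$-Spechts. What actually happens (Scopes \cite[Lemma~3.3]{Scopes1995}) is $S^{\bar\alpha}\uparrow^B\sim S^\alpha\oplus S^\beta$, $S^{\bar\beta}\uparrow^B\sim S^\alpha\oplus S^\gamma$, $S^{\bar\gamma}\uparrow^B\sim S^\beta\oplus S^\gamma$ in the Grothendieck group. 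The paper first determines the Loewy structures on the $B$-side \emph{independently} (same PIM argument applied to $P^\alpha$), obtaining unknown good semisimple pieces $R,T$; then it takes pairwise differences of the three Grothendieck identities above and separates the resulting equations using the $\partial$-value constraints (the constituents of $Y,R$ have $\partial\in\{d-1,d+1\}$, those of $Z,T$ have $\partial\in\{d,d+2\}$, and these sets are disjoint). This simultaneously yields $R\cong Y$, $T\cong Z$ \emph{and} the identifications $\Phi(\alpha_+)=\bar\beta_+$, $\Phi(\beta_+)=\bar\alpha_+$. The latter are therefore not ``purely combinatorial'' consequences of $\Phi$ preserving order: $\Phi$ only respects the lexicographic order on \emph{good} partitions, while $\bar\alpha,\bar\beta,\beta$ are exceptional, so a direct combinatorial matching of ``lex-smallest above $\bar\alpha$'' with ``lex-smallest above $\beta$'' is not available.

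\textbf{The vanishing $[\bar Y:D^{\bar\alpha_+}]=0$.} Note $\bar Y$ lives in $S^{\bar\beta}$, not $S^{\bar\alpha}$, and $\partial(\bar\alpha_+)=d+1$ does lie in $\{d-1,d+1\}$, so the $\partial$-constraint alone does not exclude it. In case~(6) the paper argues by contradiction via dominance: if $D^{\bar\alpha_+}\mid\bar Y$ then $\bar\alpha_+\rhd\bar\beta$, and the Ext from the PIM structure together with \cite[Theorem~6.1(3)]{ChuangTan2001} forces $\bar\beta\rhd\bar\alpha$, contradicting $\bar\alpha\rhd\bar\beta$. In case~(5), where $\bar\beta$ is $p$-singular, this fails and one instead uses the already-established bridge to transport the question to $B$ and invoke multiplicity-freeness of $S^\alpha$.
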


\begin{noth}\label{noth ingredients}{\bf Strategy of proof.}\,
Before proving Theorem~\ref{thm Loewy except}, we shall collect a number of important properties of
the blocks $B$ and $\bar{B}$ and their modules that we shall use extensively. 

\smallskip

(a)\, By \cite[Theorem I]{Scopes1995}, every principal indecomposable $B$-module (respectively, $\bar{B}$-module)
has Loewy length 5 and is \textit{stable}, that is, its Loewy and socle series coincide. Moreover, all Specht modules in $B$ and $\bar{B}$, respectively,
are multiplicity-free. The $\Ext^1$-space between any two simple $B$-modules (respectively, simple $\bar{B}$-modules) is at most one-dimensional,
and there are no self-extensions of any simple $B$-module (respectively, simple $\bar{B}$-module).

As well, by \cite[Remark 4.4]{Scopes1995}, the projective cover $P^\alpha$ of $D^\alpha$ has a Specht filtration with quotients, from top 
to bottom, isomorphic to $S^\alpha$, $S^\beta$ and $S^\gamma$. Analogously, the projective cover $P^{\bar{\alpha}}$ of $D^{\bar{\alpha}}$
has a Specht filtration with quotients, from top 
to bottom, isomorphic to $S^{\bar{\alpha}}$, $S^{\bar{\beta}}$ and $S^{\bar{\gamma}}$. 

\smallskip

(b)\, By \cite[Theorem~6.1]{ChuangTan2001}, the Ext-quiver of $B$ is bipartite; more precisely, if $D^\lambda$ and $D^\mu$ are
simple $B$-modules with $\Ext^1_B(D^\lambda,D^\mu)\neq \{0\}$, then $\partial(\lambda)\in\{\partial(\mu)-1,\partial(\mu)+1\}$. The analogous
statement holds for $\bar{B}$.

\smallskip

(c)\, By \cite[Proposition 6.2]{ChuangTan2001}, every Specht module $S^\lambda$ in $B$ (respectively, in $\bar{B}$) has
Loewy length at most 3, and has Loewy length 3 if and only if $\lambda$ is both $p$-regular and $p$-restricted.

\smallskip

(d)\, By \cite[Lemma 3.3]{Scopes1995}, one has
$$(S^{\bar{\alpha}})\uparrow^B\sim S^\alpha\oplus S^\beta\,, \quad (S^{\bar{\beta}})\uparrow^B\sim S^\alpha\oplus S^\gamma\;, \quad (S^{\bar{\gamma}})\uparrow^B\sim S^\beta\oplus S^\gamma\,.$$

\smallskip

(e)\, Our general strategy towards proving the assertions of Theorem~\ref{thm Loewy except} will be as follows: with (a)-(c), we shall already
deduce the Loewy lengths and most of the Loewy structure of the Specht modules in question. Part (d) will then provide us with 
systems of linear equations from which we shall 
obtain the claimed connections between the composition factors of the exceptional Specht $\bar{B}$-modules and
those of the exceptional Specht $B$-modules.
\end{noth}

\begin{proof}(of Theorem~\ref{thm Loewy except})
We prove the assertions case by case, starting with case (6), which is in some sense the most general one. Throughout this proof, let
$d:=\partial(\alpha)=\partial(\bar{\beta})$.
Recall from (\ref{eqn Loewy}) in \ref{noth block nota} our notation for Loewy structures.

\medskip

\underline{Case (6):} By Lemma~\ref{lemma l and r}, we know that all exceptional partitions of $B$ and $\bar{B}$, respectively,
are both $p$-regular and $p$-restricted. By \ref{noth ingredients}(c), the Specht modules
labelled by the exceptional partitions must, thus, have Loewy length 3. We examine $S^{\bar{\alpha}}$, $S^{\bar{\beta}}$ and
$S^{\bar{\gamma}}$ first. 
By \ref{noth part}(a) and \ref{noth partial}, we know that $\Soc(S^{\bar{\alpha}})\cong D^{\bar{\alpha}_+}$,
 $\Soc(S^{\bar{\beta}})\cong D^{\bar{\beta}_+}$ and $\Soc(S^{\bar{\gamma}})\cong D^{\bar{\gamma}_+}$. Hence
$$S^{\bar{\alpha}}\ \approx \ \begin{matrix}D^{\bar{\alpha}}\\ H^{\bar{\alpha}}\\ D^{\bar{\alpha}_+}\end{matrix}\,,\quad S^{\bar{\beta}}\ \approx \  \begin{matrix}D^{\bar{\beta}}\\ H^{\bar{\beta}}\\ D^{\bar{\beta}_+}\end{matrix}\,,\quad S^{\bar{\gamma}}\ \approx \ \begin{matrix}D^{\bar{\gamma}}\\ H^{\bar{\gamma}}\\ D^{\bar{\gamma}_+}\end{matrix}\,,$$
for non-zero semisimple $\bar{B}$-modules $H^{\bar{\alpha}}, H^{\bar{\beta}}$ and $H^{\bar{\gamma}}$.
By \ref{noth ingredients}(a), the projective cover $P^{\bar{\alpha}}$ of $D^{\bar{\alpha}}$ has a Specht filtration with quotients, from top to bottom, isomorphic to $S^{\bar{\alpha}}, S^{\bar{\beta}}$ and $S^{\bar{\gamma}}$. Hence, in particular, 
$D^{\bar{\gamma}_+}\cong \Soc(S^{\bar{\gamma}})\cong \Soc(P^{\bar{\alpha}})\cong D^{\bar{\alpha}}$ and
$\bar{\gamma}_+=\bar{\alpha}$. Since $P^{\bar{\alpha}}$ is stable, we further deduce that $D^{\bar{\gamma}}$ is isomorphic to a 
submodule of $\Rad^2(P^{\bar{\alpha}})/\Rad^3(P^{\bar{\alpha}})$, and $H^{\bar{\gamma}}$ is isomorphic to a submodule of $\Rad^3(P^{\bar{\alpha}})/\Rad^4(P^{\bar{\alpha}})$.
Analogously, we conclude that $H^{\bar{\alpha}}$ is isomorphic to a submodule of the second Loewy layer
and $D^{\bar{\alpha}_+}$ is isomorphic to a submodule of the third Loewy layer of $P^{\bar{\alpha}}$.
Next, by \cite[Theorem 6.1]{ChuangTan2001} and Lemma~\ref{lemma Ext alpha}, we know that $\dim(\Ext^1_{\bar{B}}(D^{\bar{\alpha}},D^{\bar{\beta}}))=1$ and $[S^{\bar{\beta}}:D^{\bar{\alpha}}]=1$. Since $S^{\bar{\beta}}$ is multiplicity-free, this implies
$H^{\bar{\beta}}\cong D^{\bar{\alpha}}\oplus \bar{Y}$, for some good semisimple $\bar{B}$-module $\bar{Y}$. Again using the fact that the quiver of $\bar{B}$ is bipartite, we so far get
\begin{equation}\label{eqn Palpha}
P^{\bar{\alpha}}\; \ \approx\; \  \begin{matrix}D^{\bar{\alpha}}\\H^{\bar{\alpha}}\oplus D^{\bar{\beta}}\\ D^{\bar{\alpha}_+}\oplus H^{\bar{\beta}}\oplus D^{\bar{\gamma}}\\ D^{\bar{\beta}_+}\oplus H^{\bar{\gamma}}\\ D^{\bar{\gamma}_+}\end{matrix}\; \ \approx\; \  \begin{matrix}D^{\bar{\alpha}}\\H^{\bar{\alpha}}\oplus D^{\bar{\beta}}\\ D^{\bar{\alpha}_+}\oplus D^{\bar{\alpha}}\oplus \bar{Y}\oplus D^{\bar{\gamma}}\\ D^{\bar{\beta}_+}\oplus H^{\bar{\gamma}}\\ D^{\bar{\alpha}}\end{matrix}\,.
\end{equation}
By \cite[Lemma 4.3]{Scopes1995}, $[S^{\bar{\gamma}}:D^{\bar{\beta}}]=1=[S^{\bar{\gamma}}:D^{\bar{\alpha}}]$, thus
$H^{\bar{\gamma}}\cong D^{\bar{\beta}}\oplus \bar{Z}$, for some good semisimple $\bar{B}$-module $\bar{Z}$. The assertion concerning the $\partial$-values of
the composition factors of $\bar{Y}$ and $\bar{Z}$ follows from \cite[Theorem 6.1(3)]{ChuangTan2001}.
Since $P^{\bar{\alpha}}$ is stable, this now gives $H^{\bar{\alpha}}\cong D^{\bar{\beta}_+}\oplus \bar{Z}$. Since all Specht $\bar{B}$-modules are multiplicity-free, we also see that $\bar{\alpha}_+$ and $\bar{\beta}_+$ are good. 

It remains to show that $[\bar{Y}:D^{\bar{\alpha}_+}]=0$. Assume not, so that $[S^{\bar{\beta}}:D^{\bar{\alpha}_+}]=1$ and $\bar{\alpha}_+\rhd \bar{\beta}$.
Furthermore, by (\ref{eqn Palpha}), we get $\Ext^1_{\bar{B}}(D^{\bar{\beta}},D^{\bar{\alpha}_+})\neq \{0\}$ 
implying $\bar{\alpha}_+\rhd \bar{\beta}\rhd \bar{\alpha}$, by \cite[Theorem 6.1(3)]{ChuangTan2001}, a contradiction, since $\bar{\alpha}\rhd \bar{\beta}$. This completes the proof concerning the Loewy structure of $S^{\bar{\alpha}}$, $S^{\bar{\beta}}$ and
$S^{\bar{\gamma}}$, and gives
\begin{equation}\label{eqn Salphabar}
S^{\bar{\alpha}}\ \approx \ \begin{matrix}\alpha\\ \bar{\beta}_+\oplus \bar{Z}\\\bar{\alpha}_+\end{matrix}\,,\ \quad S^{\bar{\beta}}\ \approx \ \begin{matrix}\bar{\beta}\\\bar{\alpha}\oplus \bar{Y}\\\bar{\beta}_+\end{matrix}\,,\ \quad S^{\bar{\gamma}}\ \approx \  \begin{matrix}\bar{\gamma}\\ \bar{\beta}\oplus \bar{Z}\\\bar{\alpha}\end{matrix}\,.
\end{equation}

\smallskip

Next we consider $S^\alpha$, $S^{\beta}$ and
$S^{\gamma}$, still in the case (6). By \ref{noth ingredients}(a), the projective cover $P^\alpha$ of $D^\alpha$ has a 
Specht filtration with quotients, from top to bottom, isomorphic to $S^\alpha$, $S^\beta$ and $S^\gamma$. The above arguments now
work completely analogously and give
\begin{equation}\label{eqn Salpha}
S^\alpha\ \approx \ \begin{matrix}\alpha\\ \beta_+\oplus R\\\alpha_+\end{matrix}\,,\ \quad S^\beta\ \approx \ \begin{matrix}\beta\\\alpha\oplus T\\\beta_+\end{matrix}\,,\quad \ S^\gamma \ \approx \  \begin{matrix}\gamma\\ \beta\oplus R\\\alpha\end{matrix}\,,
\end{equation}
for good semisimple $B$-modules $R$ and $T$ that are disjoint. Moreover, $\alpha_+$, $\beta_+$ are good, and 
$[T:D^{\alpha_+}]=0$. 
Every composition factor of $R$ is labelled by a $p$-regular partition with $\partial$-value $d-1$ or $d+1$, every composition 
factor of $T$ is labelled by a $p$-regular partition with $\partial$-value $d$ or $d+2$.
To complete the proof of case (6), we 
need to show that
\begin{equation}\label{eqn R=Y}
R\cong Y\;, T\cong Z\;, (D^{\bar{\alpha}_+})\uparrow^B\cong D^{\beta_+}\;, (D^{\bar{\beta}_+})\uparrow^B\cong D^{\alpha_+}\,.
\end{equation}
To this end, we
exploit the partial Scopes equivalence between $B$ and $\bar{B}$ given by block restriction and block induction, as explained in \ref{noth partial Scopes}. We set $Y:=\bar{Y}\uparrow^B$ and $Z:=\bar{Z}\uparrow^B$.  By \ref{noth partial Scopes}, we know that
$D^{\bar{\beta}}\uparrow^B\cong D^{\gamma}$ and $D^{\bar{\gamma}}\uparrow^B\cong D^\beta$. Therefore, with
\ref{noth ingredients}(d), we obtain the following
\begin{align}\label{eqn upalpha}
S^\alpha\oplus S^\beta &\sim (S^{\bar{\alpha}})\uparrow^B \sim (D^{\bar{\alpha}})\uparrow^B\oplus (D^{\bar{\beta}_+})\uparrow^B\oplus Z\oplus (D^{\bar{\alpha}_+})\uparrow^B\\ \label{eqn upbeta}
 S^\alpha\oplus S^\gamma&\sim  (S^{\bar{\beta}})\uparrow^B\sim D^\gamma\oplus  (D^{\bar{\alpha}})\uparrow^B\oplus (D^{\bar{\beta}_+})\uparrow^B\oplus Y\\ \label{eqn upgamma}
S^\beta\oplus S^\gamma &\sim (S^{\bar{\gamma}})\uparrow^B \sim D^\beta\oplus D^\gamma\oplus Z\oplus (D^{\bar{\alpha}})\uparrow^B\,.
\end{align}
To exploit these identities, recall from Lemma~\ref{lemma l and r} that 
$\partial(\bar{\alpha})=d+1=\partial(\bar{\gamma})=\partial(\beta)$ and $\partial(\bar{\beta})=d=\partial(\alpha)=\partial(\gamma)$. By \ref{noth partial}, we have $\partial(\theta)=\partial(\theta_+)$, for every $p$-restricted partition $\theta$ belonging to $B$ or $\bar{B}$. 
By Lemma~\ref{lemma colour good}, we also know that $\partial(\mu)=\partial(\Phi^{-1}(\mu))$, for every good $p$-regular partition of $\bar{B}$.

Subtracting (\ref{eqn upalpha}) from (\ref{eqn upbeta}) (in the Grothendieck group) and using (\ref{eqn Salpha}), we get
$$T\oplus Y\oplus D^{\beta_+}\sim R\oplus Z\oplus (D^{\bar{\alpha}_+})\uparrow^B\,.$$
As we have proved above, $\bar{Z}\oplus D^{\bar{\alpha}_+}$ and $\bar{Y}$ are disjoint and good. Thus the same holds for $Z\oplus (D^{\bar{\alpha}_+})\uparrow^B$ and $Y$.
As well, $R$ and $T\oplus D^{\beta_+}$ are disjoint. Therefore, we must have $T\oplus D^{\beta_+}\sim Z\oplus (D^{\bar{\alpha}_+})\uparrow^B$
and $R\cong Y$. Comparing the $\partial$-values, we deduce $D^{\beta_+}\cong (D^{\bar{\alpha}_+})\uparrow^B$
and $T\cong Z$. 
It remains to show that $(D^{\bar{\beta}_+})\uparrow^B\cong D^{\alpha_+}$. Subtracting (\ref{eqn upbeta}) from
(\ref{eqn upgamma}) (in the Grothendieck group) and using what we have just proved about $S^\beta$, we get
$S^\alpha\sim D^\alpha\oplus D^{\beta_+}\oplus R\oplus (D^{\bar{\beta}_+})\uparrow^B$. On the other hand, by (\ref{eqn Salpha}),
$S^\alpha\sim D^\alpha\oplus D^{\beta_+}\oplus R\oplus D^{\alpha_+}$, hence $(D^{\bar{\beta}_+})\uparrow^B\cong D^{\alpha_+}$
and $\Phi(\alpha_+)=\bar{\beta}_+$.

This completes the proof of the assertion of the theorem in the case (6).

\bigskip

\underline{Case (2):} By Lemma~\ref{lemma l and r}, the partitions $\bar{\alpha}$ and $\beta$ are $p$-regular and not $p$-restricted.
The remaining exceptional partitions are both $p$-regular and $p$-restricted. Using \ref{noth ingredients} and
arguing as in the proof of case (6) above, we this time deduce
\begin{equation}\label{eqn Specht case (2)}
 S^{\bar{\alpha}}\ \approx \ \begin{matrix} \bar{\alpha}\\\bar{\beta}_+\oplus \bar{Z}\\ \\\end{matrix}\;, \ \quad S^{\bar{\beta}}\ \approx \  \begin{matrix} \bar{\beta}\\\bar{\alpha}\oplus\bar{Y}\\ \bar{\beta}_+\end{matrix}\;, \  \quad S^{\bar{\gamma}}\ \approx \  \begin{matrix}\bar{\gamma}\\\bar{\beta}\oplus\bar{Z}\\\bar{\alpha}\end{matrix}\,
\end{equation}
as well as 
\begin{equation}\label{eqn Specht B case (2)}
 S^{\alpha}\ \approx \  \begin{matrix} \alpha\\ R\\ \alpha_+\end{matrix}\;, \ \quad S^{\beta}\ \approx \  \begin{matrix} \beta\\\alpha\oplus T\\ \\\end{matrix}\;, \ \quad S^{\gamma}\ \approx \ \begin{matrix}\gamma\\\beta\oplus R\\\alpha\end{matrix}\,.
\end{equation}
Here $\bar{Y}$ and $\bar{Z}$ are good, semisimple and disjoint. Moreover, $\bar{\beta}_+$ is good. 
The assertion concerning $\partial$-values follows from \cite[Theorem 6.1(3)]{ChuangTan2001}.
As well, $R$ and $T$ are good, semisimple and disjoint, and $\alpha_+$ is good. Every composition factor of $R$ is labelled by a $p$-regular partition with $\partial$-value $d-1$ or $d+1$, every composition 
factor of $T$ is labelled by a $p$-regular partition with $\partial$-value $d$ or $d+2$.

In the following, set $Y:=\bar{Y}\uparrow^B$, $Z:=\bar{Z}\uparrow^B$ and
$D^\lambda:=(D^{\bar{\beta}_+})\uparrow^B$. 
Again we have $(D^{\bar{\beta}})\uparrow^B\cong D^\gamma$ and $(D^{\bar{\gamma}})\uparrow^B\cong D^\beta$
It remains to show that $Y\sim R$, $Z\sim T$ and $\Phi(\alpha_+)=\bar{\beta}_+$.
From \ref{noth ingredients}(c) and (\ref{eqn Specht B case (2)}) we get

\begin{align}\label{eqn upalpha 2}
S^\alpha\oplus S^\beta&\sim (S^{\bar{\alpha}})\uparrow^B\sim (D^{\bar{\alpha}})\uparrow^B\oplus (D^{\bar{\beta}_+})\uparrow^B\oplus Z\,\\ \label{eqn upbeta 2}
S^\alpha\oplus S^\gamma&\sim (S^{\bar{\beta}})\uparrow^B\sim D^\gamma\oplus (D^{\bar{\alpha}})\uparrow^B\oplus Y\oplus (D^{\bar{\beta}_+})\uparrow^B\, \\ \label{eqn upgamma 2}
S^\beta\oplus S^\gamma& \sim (S^{\bar{\gamma}})\uparrow^B\sim D^\beta\oplus D^\gamma\oplus Z\oplus  (D^{\bar{\alpha}})\uparrow^B\,.
\end{align}
Subtracting (\ref{eqn upalpha 2}) from (\ref{eqn upbeta 2}) in the Grothendieck group and using (\ref{eqn Specht case (2)}) and 
(\ref{eqn Specht B case (2)}), we see that $Y\oplus T\sim R\oplus Z$. Since $\bar{Y}$ and $\bar{Z}$ are disjoint, so are $Y$ and $Z$.
Since also $R$ and $T$ are disjoint, we must have $T\sim Z$ and $Y\sim R$. 
Lastly, we subtract (\ref{eqn upbeta 2}) from (\ref{eqn upgamma 2}). Together with what we have just shown and (\ref{eqn Salpha})
this gives $D^{\alpha_+}\oplus Y\sim D^\beta\oplus Z\sim (D^{\bar{\beta}_+})\uparrow^B\oplus Y$, 
thus $D^{\alpha_+}\cong (D^{\bar{\beta}_+})\uparrow^B$ and $\Phi(\alpha_+)=\bar{\beta}_+$.

\medskip

\underline{Case (4):} By Lemma~\ref{lemma l and r}, the partitions $\beta$ and $\bar{\gamma}$ are $p$-restricted, but not $p$-regular.
The remaining exceptional partitions are both $p$-regular and $p$-restricted. From \ref{noth ingredients}(a)-(c) we get
\begin{equation}\label{eqn Specht case (4)}
 S^{\bar{\alpha}}\ \approx \ \begin{matrix} \bar{\alpha}\\\bar{\beta}_+\oplus \bar{Z}\\ \bar{\alpha}_+\end{matrix}\;, \ \quad S^{\bar{\beta}}\ \approx \ \begin{matrix} \bar{\beta}\\\bar{\alpha}\oplus\bar{Y}\\ \bar{\beta}_+\end{matrix}\;,\  \quad S^{\bar{\gamma}}\ \approx \  \begin{matrix}\\\bar{\beta}\oplus\bar{Z}\\\bar{\alpha}\end{matrix}\,
\end{equation}
and 
\begin{equation}\label{eqn Specht B case (4)}
 S^{\alpha}\ \approx \ \begin{matrix} \alpha\\ R\oplus \beta_+\\ \alpha_+\end{matrix}\;, \ \quad S^{\beta}\ \approx \ \begin{matrix} \\\alpha\oplus T\\ \beta_+\end{matrix}\;, \ \quad S^{\gamma}\ \approx \  \begin{matrix}\gamma\\ R\\\alpha\end{matrix}\,.
\end{equation}
Here $\bar{Y}$ and $\bar{Z}$, $R$ and $T$ are good and semisimple. Moreover, $\bar{\alpha}_+$, $\bar{\beta}_+$,  $\alpha_+$ and $\beta_+$
are good. As in the proof of case (6) above, we see that $[\bar{Y}:D^{\bar{\alpha}_+}]=0$.
The assertion concerning the $\partial$-values of the composition factors of $\bar{Y}$ and $\bar{Z}$ follows from \cite[Theorem 6.1(3)]{ChuangTan2001}.
Since $S^\gamma$ has Loewy length $3$, we 
further have $R\neq \{0\}$. 
Let $\lambda:=\Phi^{-1}(\bar{\beta}_+)$, $\mu:=\Phi^{-1}(\bar{\alpha}_+)$, $Y:=\bar{Y}\uparrow^B$ and $Z:=\bar{Z}\uparrow^B$.
Then from \ref{noth ingredients}(d) we get
\begin{align}\label{eqn upalpha 4}
S^\alpha\oplus S^\beta&\sim (S^{\bar{\alpha}})\uparrow^B\sim (D^{\bar{\alpha}})\uparrow^B\oplus D^\lambda\oplus D^\mu\oplus Z\,\\ \label{eqn upbeta 4}
S^\alpha\oplus S^\gamma&\sim (S^{\bar{\beta}})\uparrow^B\sim D^\gamma\oplus (D^{\bar{\alpha}})\uparrow^B\oplus Y\oplus D^\lambda\, \\ \label{eqn upgamma 4}
S^\beta\oplus S^\gamma& \sim (S^{\bar{\gamma}})\uparrow^B\sim D^\gamma\oplus Z\oplus  (D^{\bar{\alpha}})\uparrow^B\,.
\end{align}
We subtract (\ref{eqn upalpha 4}) from (\ref{eqn upbeta 4}) and use (\ref{eqn Specht case (4)}) and (\ref{eqn Specht B case (4)}) to 
get $Y\oplus T\oplus D^{\beta_+}\sim R\oplus Z\oplus D^\mu$. By Lemma~\ref{lemma l and r} and Lemma~\ref{lemma colour good}, we 
have $\partial(\beta_+)=d+1=\partial(\mu)$. 
By \cite[Theorem 6.1(3)]{ChuangTan2001} and Lemma~\ref{lemma l and r} again,
every composition factor of $R$ has a labelling partition with $\partial$-value $d-1$ or $d+1$; 
every composition factor of $T$ has a labelling partition with $\partial$-value $d$ or $d+2$. This implies $Z\cong T$ and $Y\oplus D^{\beta_+}\sim R\oplus D^\mu$. Next, consider the difference (\ref{eqn upgamma 4})-(\ref{eqn upbeta 4}). Together with (\ref{eqn Specht B case (4)})
this gives $T\oplus Y\oplus D^\lambda\sim Z\oplus R\oplus D^{\alpha_+}$, hence $Y\oplus D^\lambda\sim R\oplus D^{\alpha_+}$. By comparing 
$\partial$-values, we deduce $Y\cong R$ and $\lambda=\alpha$, and then also $\beta_+=\mu$. This completes the proof
in the case (4).

\medskip

\underline{Case (1):} In this case, by Lemma~\ref{lemma l and r}, the partitions $\bar{\alpha}$, $\bar{\beta}$, $\alpha$ and $\beta$ are
$p$-regular and not $p$-restricted, while $\bar{\gamma}$ and $\gamma$ are both $p$-regular and $p$-restricted. 
With \ref{noth ingredients}(a)-(c) we deduce
\begin{equation}\label{eqn Specht case (1)}
 S^{\bar{\alpha}}\ \approx \  \begin{matrix} \bar{\alpha}\\\bar{Z}\\ \\\end{matrix}\;, \ \quad S^{\bar{\beta}}\ \approx \  \begin{matrix} \bar{\beta}\\\bar{\alpha}\oplus\bar{Y}\\ \\\end{matrix}\;, \quad S^{\bar{\gamma}}\ \approx \  \begin{matrix}\bar{\gamma}\\\bar{\beta}\oplus\bar{Z}\\\bar{\alpha}\end{matrix}\,
\end{equation}
and 
\begin{equation}\label{eqn Specht B case (1)}
 S^{\alpha}\ \approx \  \begin{matrix} \alpha\\ R\\ \\\end{matrix}\;, \ \quad S^{\beta}\ \approx \  \begin{matrix} \beta\\\alpha\oplus T\\ \\\end{matrix}\;, \  \quad S^{\gamma}\ \approx \ \begin{matrix}\gamma\\ \beta\oplus R\\\alpha\end{matrix}\,.
\end{equation}
Here $\bar{Y}$, $\bar{Z}$, $R$ and $T$ are semisimple and good. The assertion concerning the $\partial$-values of the composition factors of $\bar{Y}$ and
$\bar{Z}$ follows from \cite[Theorem 6.1(3)]{ChuangTan2001} and the Loewy structure of $P^{\bar{\alpha}}$.
From  \cite[Theorem 6.1(3)]{ChuangTan2001}, Lemma~\ref{lemma l and r} and the Loewy structure of $P^{\alpha}$ we further deduce
that every composition factor of $R$ has a labelling partition with $\partial$-value $d-1$ or $d+1$; 
every composition factor of $T$ has a labelling partition with $\partial$-value $d$ or $d+2$.
Let $Y:=\bar{Y}\uparrow^B$ and $Z:=\bar{Z}\uparrow^B$. With \ref{noth ingredients}(d) we this time have
\begin{align}\label{eqn upalpha 1}
S^\alpha\oplus S^\beta&\sim (S^{\bar{\alpha}})\uparrow^B\sim (D^{\bar{\alpha}})\uparrow^B\oplus  Z\,\\ \label{eqn upbeta 1}
S^\alpha\oplus S^\gamma&\sim (S^{\bar{\beta}})\uparrow^B\sim D^\gamma\oplus (D^{\bar{\alpha}})\uparrow^B\oplus Y\, \\ \label{eqn upgamma 1}
S^\beta\oplus S^\gamma& \sim (S^{\bar{\gamma}})\uparrow^B\sim D^\beta\oplus D^\gamma\oplus Z\oplus  (D^{\bar{\alpha}})\uparrow^B\,.
\end{align}
Considering the difference(\ref{eqn upbeta 1})-(\ref{eqn upalpha 1}) and (\ref{eqn Specht B case (1)}) we see that $Y\oplus T\sim R\oplus Z$. 
Comparing the $\partial$-values of the composition factors of $Y$, $Z$, $R$ and $T$, this forces $T\cong Z$ and $Y\cong R$.

\medskip

\underline{Case (3):} By Lemma~\ref{lemma l and r}, we know that $\bar{\alpha}$ is $p$-regular and not $p$-restricted, $\bar{\beta}, \alpha$ and
$\gamma$
are both $p$-regular and $p$-restricted,  $\bar{\gamma}$ is $p$-restricted and not $p$-regular, and $\gamma$ is both $p$-regular and $p$-restricted.
Recall from Remark~\ref{rem l and r} that $S^\beta\cong D^\alpha$. Together with \ref{noth ingredients}(a)-(c) we get
\begin{equation}\label{eqn Specht case (3)}
 S^{\bar{\alpha}}\ \approx \  \begin{matrix} \bar{\alpha}\\\bar{\beta}_+\oplus \bar{Z}\\ \\ \end{matrix}\;, \ \quad S^{\bar{\beta}}\ \approx \ \begin{matrix} \bar{\beta}\\\bar{\alpha}\oplus\bar{Y}\\ \bar{\beta}_+\end{matrix}\;, \ \quad S^{\bar{\gamma}}\ \approx \  \begin{matrix}\\\bar{\beta}\oplus\bar{Z}\\\bar{\alpha}\end{matrix}\,
\end{equation}
and 
\begin{equation}\label{eqn Specht B case (3)}
 S^{\alpha}\ \approx \  \begin{matrix} \alpha\\ R\\ \alpha_+\end{matrix}\;, \ \quad S^{\beta}\ \approx \  \begin{matrix} \\\alpha\\ \\\end{matrix}\;, \  \quad S^{\gamma}\ \approx \ \begin{matrix}\gamma\\  R\\\alpha\end{matrix}\,.
\end{equation}
Here $\bar{Y}$, $\bar{Z}$ and $R$ are good and semisimple.
As well,  $\bar{\beta}_+$ and $\alpha_+$ are good. 
The assertion concerning the $\partial$-values of the composition factors of $\bar{Y}$ and $\bar{Z}$
 follows from \cite[Theorem 6.1(3)]{ChuangTan2001} and the Loewy structure of $P^{\bar{\alpha}}$.
From  \cite[Theorem 6.1(3)]{ChuangTan2001}, Lemma~\ref{lemma l and r} and the Loewy structure of $P^{\alpha}$ we further deduce
that every composition factor of $R$ has a labelling partition with $\partial$-value $d-1$ or $d+1$.
Observe also that $R\neq \{0\}$, since $S^\alpha$ has Loewy length $3$, by \ref{noth ingredients}(c).
Let $Y:=\bar{Y}\uparrow^B$ and $Z:=\bar{Z}\uparrow^B$, and let $\lambda:=\Phi^{-1}(\bar{\beta}_+)$. Then \ref{noth ingredients}(d) gives
\begin{align}\label{eqn upalpha 3}
S^\alpha\oplus S^\beta&\sim (S^{\bar{\alpha}})\uparrow^B\sim (D^{\bar{\alpha}})\uparrow^B\oplus  Z\oplus D^\lambda\,\\ \label{eqn upbeta 3}
S^\alpha\oplus S^\gamma&\sim (S^{\bar{\beta}})\uparrow^B\sim D^\gamma\oplus (D^{\bar{\alpha}})\uparrow^B\oplus Y\oplus D^\lambda\, \\ \label{eqn upgamma 3}
S^\beta\oplus S^\gamma& \sim (S^{\bar{\gamma}})\uparrow^B\sim D^\gamma\oplus Z\oplus  (D^{\bar{\alpha}})\uparrow^B\,.
\end{align}
Considering the difference (\ref{eqn upbeta 3})-(\ref{eqn upalpha 3}) and (\ref{eqn Specht B case (3)}), we see that $Y\sim Z\oplus R$.
Comparing $\partial$-values, we further deduce that $Z=\{0\}$ and $Y\cong R$; in particular, also $\bar{Z}=\{0\}$. Next we consider the difference (\ref{eqn upgamma 3})-(\ref{eqn upalpha 3}) and (\ref{eqn Specht B case (3)}), which yields $\lambda=\bar{\alpha}_+$.

\medskip

\underline{Case (5):} By Lemma~\ref{lemma l and r}, the partitions $\bar{\alpha}$ and $\alpha$ are $p$-regular and $p$-restricted, while
$\bar{\beta}$, $\bar{\gamma}$, $\beta$ and $\gamma$ are $p$-restricted and not $p$-regular. From
\ref{noth ingredients}(a)-(c) we get
\begin{equation}\label{eqn Specht case (5)}
 S^{\bar{\alpha}}\ \approx \  \begin{matrix} \bar{\alpha}\\\bar{\beta}_+\oplus \bar{Z}\\ \bar{\alpha}_+\end{matrix}\;, \ \quad S^{\bar{\beta}}\ \approx \  \begin{matrix} \\\bar{\alpha}\oplus\bar{Y}\\ \bar{\beta}_+\end{matrix}\;, \ \quad S^{\bar{\gamma}}\ \approx \  \begin{matrix}\\ \bar{Z}\\\bar{\alpha}\end{matrix}\,
\end{equation}
and 
\begin{equation}\label{eqn Specht B case (5)}
 S^{\alpha}\ \approx \  \begin{matrix} \alpha\\ R\oplus \beta_+\\ \alpha_+\end{matrix}\;, \  \quad S^{\beta}\ \approx \  \begin{matrix} \\\alpha\oplus T\\ \beta_+\end{matrix}\;, \ \quad S^{\gamma}\ \approx \  \begin{matrix}\\ R\\\alpha\end{matrix}\,.
\end{equation}
Here $\bar{Y}$ $\bar{Z}$, $R$ and $T$ are good and semisimple and disjoint. Moreover, $\bar{\alpha}_+$, $\bar{\beta}_+$, $\alpha_+$ and $\beta_+$ are good. 
The assertion concerning the $\partial$-values of the composition factors of $\bar{Y}$ and $\bar{Z}$ follows from \cite[Theorem 6.1(3)]{ChuangTan2001} and the Loewy structure of $P^{\bar{\alpha}}$.
From \cite[Theorem 6.1(3)]{ChuangTan2001}, Lemma~\ref{lemma l and r} and the Loewy structure of $P^{\alpha}$ we further deduce
that every composition factor of $R$ has a labelling partition with $\partial$-value $d-1$ or $d+1$; 
every composition factor of $T$ has a labelling partition with $\partial$-value $d$ or $d+2$.
Let $Y:=\bar{Y}\uparrow^B$ and $Z:=\bar{Z}\uparrow^B$. Let further $\lambda:=\Phi^{-1}(\bar{\beta}_+)$ and
$\mu:=\Phi^{-1}(\bar{\alpha}_+)$. From \ref{noth ingredients}(d) we obtain
\begin{align}\label{eqn upalpha 5}
S^\alpha\oplus S^\beta&\sim (S^{\bar{\alpha}})\uparrow^B\sim (D^{\bar{\alpha}})\uparrow^B\oplus  Z\oplus D^\lambda\oplus D^\mu\,\\ \label{eqn upbeta 5}
S^\alpha\oplus S^\gamma&\sim (S^{\bar{\beta}})\uparrow^B\sim (D^{\bar{\alpha}})\uparrow^B\oplus Y\oplus D^\lambda\, \\ \label{eqn upgamma 5}
S^\beta\oplus S^\gamma& \sim (S^{\bar{\gamma}})\uparrow^B\sim Z\oplus  (D^{\bar{\alpha}})\uparrow^B\,.
\end{align}
We consider the difference (\ref{eqn upbeta 5})-(\ref{eqn upalpha 5}) and (\ref{eqn Specht B case (5)}) to get
$Y\oplus T\oplus \beta_+\sim R\oplus Z\oplus D^\mu$. So, comparing $\partial$-values, we get $T\cong Z$ and
$Y\oplus D^{\beta_+}\sim R\oplus D^\mu$. Next we consider the difference (\ref{eqn upalpha 5})-(\ref{eqn upgamma 5})
and (\ref{eqn Specht B case (5)}) to get $D^\lambda\oplus D^\mu\sim D^{\beta_+}\oplus D^{\alpha_+}$.
Again comparing $\partial$-values, this gives $\alpha_+=\lambda$, $\beta_+=\mu$ and then also $Y\cong R$.

To show that $[\bar{Y}:D^{\bar{\alpha}_+}]=0$, we cannot argue as in cases (4) and (6), since $\bar{\beta}$ is not $p$-regular.
However, we now see that if $[\bar{Y}:D^{\bar{\alpha}_+}]>0$, then we would also have $[Y:D^{\beta_+}]>0$ and $[S^\alpha:D^{\beta_+}]>1$, which
is impossible, since every Specht module in $B$ is multiplicity-free.

This completes the proof of the theorem.
\end{proof}


\section{Proofs of Theorem~\ref{thm main1} and Theorem~\ref{thm main2}}\label{sec main1}

From now on, let $p\geq 5$, for the remainder of this section.
Our aim is to prove Theorem~\ref{thm main1} and Theorem~\ref{thm main2}. To this end we shall start by applying the results from
Section~\ref{sec weight 2}  to the case of blocks whose $p$-cores are hook partitions, that is, are of
the form $(k,1^l)$, for some $k,l\in\NN_0$. Note that a hook partition $(k,1^l)$ is a $p$-core if and only if either $0\leq k+l\leq p-1$, or
$p+1\leq k+l\leq 2p-1$, $0\leq k<p+1$ and $0\leq l <p$.
To simplify the notation, we shall denote the block $B_{(k,1^l)}(2,p)$ by $B_{k,l}$.

While Theorem~\ref{thm main1} is stated in terms of undirected graphs, we restate and prove a more detailed version here:

\begin{thm}\label{thm main1 details}
Let $p\geq 5$, and let $B_{k,l}$ be a block of $F\mathfrak{S}_n$ of $p$-weight $2$ and $p$-core $(k,1^l)$, for
some $k,l\in\NN_0$. With the graphs defined in Appendix~\ref{sec quiv}, the Ext-quiver of $B_{k,l}$ 
equals

\begin{enumerate}
\item[{\rm (a)}] $Q_{0,0}(p)$, if $k=l=0$,
\item[{\rm (b)}] $Q_{k,l}(p)$, if $1\leq k+l\leq p-1$,
\item[{\rm (c)}] $Q_{k-1,l-1}(p)$, if $p+1\leq k+l\leq 2p-1$,
\end{enumerate}

where the vertices in row $i\geq 0$, from top to bottom, are labelled by the $p$-regular partitions of $B_{k,l}$ with $\partial$-value $i$
and the total ordering on the vertices is the lexicographic ordering on partitions.
\end{thm}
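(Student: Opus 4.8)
The plan is to prove Theorem~\ref{thm main1 details} by induction on $k+l$, using the algorithm encoded in Theorem~\ref{thm Loewy except} together with the partial Scopes reduction described in \ref{noth partial Scopes}. The base of the induction is the principal block $B_{0,0}$ of $F\mathfrak{S}_{2p}$, whose quiver is $Q_{0,0}(p)$ by the results recalled in Appendix~\ref{sec B0} (originally due to Martin, but re-derived there from the decomposition matrix and the Loewy structures of the Specht modules). One checks directly that the vertex labelling by $\partial$-value and lexicographic order is as claimed for this block.

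For the inductive step I would argue as follows. Given a hook $p$-core $(k,1^l)$ with $k+l\geq 1$, choose on a suitable abacus display (as set up in \ref{noth abacus} and Appendix~\ref{sec abacus}) a runner swap realizing $B_{k,l}$ as the large block of a $(2:1)$-pair $(B,\bar B)$ with $\bar B = B_{k',l'}$, where $(k',1^{l'})$ is the hook $p$-core obtained by decreasing either $k$ or $l$ by one; the abacus combinatorics of the appendix guarantees that for every relevant hook core such a predecessor hook core exists, and identifies which of the six cases (1)--(6) of Remark~\ref{rem l and r} occurs. By induction the quiver of $\bar B$ is known, with its vertices correctly labelled. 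Now apply the recipe of \ref{noth partial Scopes}: delete the vertex $\bar\alpha$ and its incident edges, relabel every remaining vertex $\mu$ of $\bar B$ by $\Phi^{-1}(\mu)$ (which by Lemma~\ref{lemma colour good} and Lemma~\ref{lemma l and r} preserves $\partial$-values on good partitions, hence keeps the row structure intact), and then reattach the new vertex $\alpha$ together with exactly the edges determined by Lemma~\ref{lemma Ext alpha}(b). The Loewy structures from Theorem~\ref{thm Loewy except} tell us precisely which simple modules $D^\lambda$ with $[S^\alpha:D^\lambda]\neq 0$ and $|\partial(\alpha)-\partial(\lambda)|=1$ occur, and in which Loewy layer, so the edges at $\alpha$ — and the position of $\alpha$ in its $\partial$-row in lexicographic order — are pinned down. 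Carrying this out for each of the six cases yields a graph isomorphic to the one obtained from $Q_{k',l'}(p)$ by the corresponding combinatorial move, which by the explicit definitions in Appendix~\ref{sec quiv} is exactly $Q_{k,l}(p)$ (respectively $Q_{k-1,l-1}(p)$ in the range $p+1\leq k+l\leq 2p-1$, where the appendix combinatorics shows $B_{k,l}$ is Scopes-equivalent, via $(2:k)$-pairs with $k\geq 2$, to a block in the first range and hence the relabelling of \ref{noth Scopes} again preserves $\partial$-values and colours).

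The bookkeeping splits naturally into the cases where the predecessor is obtained by lowering $l$ versus lowering $k$, and further according to which of (1)--(6) governs the pair; the appendix combinatorics of $\partial$-values and colours for hook partitions is what makes each case determinate. The main obstacle I expect is precisely this case analysis: one must verify, case by case and using the explicit graphs $Q_{k,l}(p)$ of Appendix~\ref{sec quiv}, that the abstract ``remove $\bar\alpha$, relabel by $\Phi^{-1}$, reattach $\alpha$'' operation of Theorem~\ref{thm Loewy except} matches the combinatorial passage from $Q_{k',l'}(p)$ to $Q_{k,l}(p)$ on the nose, including the claim that the new edges at $\alpha$ are neither too few nor too many. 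The structural inputs — $\mathrm{Ext}^1$ at most one-dimensional, bipartiteness with respect to $\partial$, multiplicity-freeness of Specht modules, the Specht filtration of $P^\alpha$ by $S^\alpha,S^\beta,S^\gamma$, and the identities $\bar Y\uparrow^B\cong Y$, $\bar Z\uparrow^B\cong Z$ — do all the heavy lifting, so no genuinely new representation-theoretic difficulty arises; the work is entirely in the graph-theoretic verification, which is why the detailed statement is separated out and the full argument is deferred to this section with the combinatorial lemmas supplied in the appendix.
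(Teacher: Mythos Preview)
Your overall strategy---induction on $k+l$ via $(2:1)$-pairs, using the partial Scopes reduction of \ref{noth partial Scopes} and Theorem~\ref{thm Loewy except} to pass from the quiver of $\bar B$ to that of $B$---matches the paper's approach exactly for the range $0\leq k+l\leq p-1$. The paper carries out precisely this case analysis, choosing $\bar B=B_{k-1,l}$ when $k>1$ and $\bar B=B_{k,l-1}$ when $k=1$, and using Lemma~\ref{lemma alpha max} to locate $\bar\alpha$ as the lexicographically extremal vertex in its $\partial$-row.

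However, your treatment of the range $p+1\leq k+l\leq 2p-1$ contains a genuine error. You claim that the appendix combinatorics shows $B_{k,l}$ is Scopes-equivalent, via $(2:k)$-pairs with $k\geq 2$, to a block in the first range. This is false: Proposition~\ref{prop (2:1) pairs hooks}(c) and Corollary~\ref{cor Scopes hooks} show that \emph{only} the blocks with $k+l=p+1$ are Scopes-equivalent (via a $(2:2)$-pair) to a hook-core block in the first range, namely $B_{k-1,l-1}$; the blocks with $p+2\leq k+l\leq 2p-1$ represent their own Scopes classes. Relatedly, your blanket claim that every hook-core block with $k+l\geq 1$ admits a $(2:1)$-pair predecessor with hook core fails precisely at $k+l=p+1$: removing a node from $(k,1^l)$ there yields a partition of size $p$, which is never a $p$-core. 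The paper therefore handles $k+l=p+1$ separately by the Scopes equivalence to $B_{k-1,l-1}$ (using \ref{noth Scopes} to carry over $\partial$-values, colours and the lexicographic labelling), and then runs the $(2:1)$-pair induction a second time within the range $p+2\leq k+l\leq 2p-1$, starting from this $k+l=p+1$ base.

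One further small point: in the boundary case $k=1$, $l=p-2$ (case (5) of Remark~\ref{rem l and r}), Theorem~\ref{thm Loewy except} alone does not determine $\bar Y$ and hence does not immediately bound the neighbours of $\alpha$; the paper closes this gap by invoking the Mullineux correspondence with the already-treated block $B_{p-1,0}$. So your assertion that no genuinely new representation-theoretic input is needed beyond Theorem~\ref{thm Loewy except} is slightly too optimistic.
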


In preparation of the proof of Theorem~\ref{thm main1 details}, we next collect a number of properties of 
$(2:1)$-pairs of weight-2 blocks labelled by hook partitions.

\begin{prop}\label{prop (2:1) pairs hooks}
Let $k,l\in \NN_0$ be such that $(k,1^l)$ is a $p$-core partition. Moreover, let $n:=k+l+2p$, and let $B$ be the block
$B_{k,l}$ of $F\mathfrak{S}_n$.

\begin{enumerate}

\item[{\rm (a)}] Let $k>1$. If $1\leq k+l\leq p-1$ or $p+2\leq k+l\leq 2p-1$, then $B$ forms a $(2:1)$-pair with the block $B_{k-1,l}$
of $F\mathfrak{S}_{n-1}$.

\item[{\rm (b)}] Let $l\geq 1$. If $1\leq k+l\leq p-1$ or $p+2\leq k+l\leq 2p-1$, then $B$ forms a $(2:1)$-pair with the block
$B_{k,l-1}$ of $F\mathfrak{S}_{n-1}$.

\item[{\rm (c)}] Suppose that also $(k',1^{l'})$ is a $p$-core partition, for $k',l'\in\NN_0$ with $k'+l'=k+l-2$. Then $B$ 
forms a $(2:2)$-pair with the block $B_{k',l'}$ of $F\mathfrak{S}_{n-2}$ if and only if $k+l=p+1$, $k=k'+1$ and $l=l'+1$.
\end{enumerate}
\end{prop}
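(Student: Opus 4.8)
The plan is to prove Proposition~\ref{prop (2:1) pairs hooks} by a direct and fairly mechanical analysis of abacus displays, using the criterion for $(w:k)$-pairs recalled in \ref{noth abacus}(a). The central observation is that a hook $p$-core $(k,1^l)$ has a particularly simple abacus display, and the effect of adjoining beads to fill out a block of weight $2$ is easy to control.

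First I would fix a convenient abacus. Display $\kappa_B=(k,1^l)$ on an abacus with $p$ runners and $t=l+1$ beads (or, more comfortably, with $s+2p$ beads for a suitable $s$, as in \ref{noth (2:1)}). The $p$-core condition ensures all beads are pushed as far up as possible, so the bead positions are exactly $\{\,k+t-1,\ t-2,\ t-3,\ \dots,\ 1,\ 0\,\}$; that is, one ``high'' bead at position $k+l$ coming from the first part, and then $l$ consecutive beads occupying positions $0,1,\dots,l-1$ coming from the arm of $1$'s, after shifting. Reducing these positions modulo $p$ tells us, for each runner $i\in\{1,\dots,p\}$, how many beads $m_i$ it carries. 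One checks that the runner configuration falls into exactly the two regimes $0\le k+l\le p-1$ and $p+1\le k+l\le 2p-1$, and in each regime the difference $m_i-m_{i-1}$ between consecutive runners is at most $1$ everywhere except possibly at one or two distinguished junctions; moreover the only places where the difference equals $1$ are precisely those where decreasing $k$ by $1$ (removing a node from the first row) or decreasing $l$ by $1$ (removing a node from the first column) corresponds to swapping two adjacent runners. This bookkeeping gives parts (a) and (b): one verifies that, in the stated ranges of $k+l$, runner $i$ (for the appropriate $i$) has exactly $1$ more bead than runner $i-1$, so by \ref{noth abacus}(a) the swap produces a $(2:1)$-pair, and the resulting $p$-core is the hook $(k-1,1^l)$ in case (a) and $(k,1^{l-1})$ in case (b). The excluded value $k+l=p$ (and $k+l=2p-1$ with $k$ or $l$ extremal, which is why the hypotheses say $k>1$, resp.\ $l\ge 1$, together with $p+2\le k+l$) must be handled separately, noting that there the relevant bead-count difference jumps to $0$ or to $\ge 2$, so no $(2:1)$-pair arises by that swap.

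For part (c), the $(2:2)$-pair criterion requires a runner $i$ with exactly $2$ more beads than runner $i-1$. From the explicit bead configuration above, this can only happen at the single junction created by the ``high'' bead at position $k+l$: having a difference of $2$ forces that bead to sit on a runner which already carries the $l$-block contribution in just the right way, and a short case check shows this occurs precisely when $k+l=p+1$, in which case the swap removes one node from the first row and one from the first column simultaneously, producing the $p$-core $(k-1,1^{l-1})$, i.e.\ $k'=k-1$, $l'=l-1$. Conversely, if $k+l\ne p+1$ or if $(k',l')\ne(k-1,l-1)$, one exhibits that no adjacent pair of runners differs by exactly $2$, so no such $(2:2)$-pair exists; here one also uses that a $(2:2)$-pair is determined by its $p$-core together with the pair structure, so it suffices to rule out the abacus move.

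I expect the main obstacle to be purely combinatorial stamina rather than any conceptual difficulty: one has to carefully compute the residues $\beta_i \bmod p$ of the bead positions and the bead counts $m_i$ in the two regimes $0\le k+l\le p-1$ and $p+1\le k+l\le 2p-1$, taking care with the boundary values $k+l\in\{p-1,p,p+1,2p-1\}$ and with the degenerate cases $k\le 1$ or $l=0$ that the hypotheses are designed to exclude. It will be cleanest to record the abacus pictures explicitly (as the paper already does elsewhere in Remark~\ref{rem l and r}) and read off all three statements from them; the $p$-core identifications $(k-1,1^l)$, $(k,1^{l-1})$, $(k-1,1^{l-1})$ then follow immediately from the ``count the gaps'' recipe of \ref{noth abacus}.
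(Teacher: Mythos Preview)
Your approach is essentially the same as the paper's: a direct abacus computation, writing down the bead configuration of the hook $p$-core explicitly and reading off the runner differences. The paper fixes $l+1+2p$ beads throughout, draws the last one or two rows of the abacus in each regime, and identifies the relevant runner swap; your plan to compute the $m_i$ and locate the junctions where $m_i-m_{i-1}\in\{1,2\}$ amounts to the same thing.

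Two small points. First, your remarks about ``the excluded value $k+l=p$'' and ``$k+l=2p-1$ extremal'' are slightly off: $k+l=p$ is already impossible since $(k,1^l)$ would then not be a $p$-core, and $k+l=2p-1$ is not excluded at all. The only genuinely missing value in the ranges of (a) and (b) is $k+l=p+1$, and that is exactly where the $(2:2)$-pair of part~(c) appears instead. Second, for the ``only if'' direction of (c) the paper uses a shortcut you might prefer to your proposed case check: in a $(2:2)$-pair one removes two nodes of the \emph{same} $p$-residue from the core, so necessarily $(k',l')=(k-1,l-1)$, and the residue condition on the two removable nodes of $(k,1^l)$ forces $k-1\equiv p-l\pmod p$, i.e.\ $k+l\equiv 1\pmod p$, hence $k+l=p+1$ in the admissible range. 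This avoids having to scan all adjacent runner pairs for a difference of~$2$.
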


\begin{proof} 
In order to prove the first half of (a) and the first half of (b), suppose that $1\leq k+l\leq p-1$. We display $(k,1^l)$ on an abacus with $l+1+2p$ beads. Then the last row of the abacus is

\begin{center}
\begin{tabular}{ccccccccccc}
$-$ & $\bullet$ & $\cdots$ & $\bullet$ & $-$ &$\cdots$ & $-$ &$\bullet$ & $-$ & $\cdots$ & $-$\\
&\multicolumn{3}{c}{\upbracefill}&\multicolumn{3}{c}{\upbracefill}&&&&\\
&\multicolumn{3}{c}{$l$}&\multicolumn{3}{c}{$k-1$}&&&&
\end{tabular}
\end{center}

\smallskip

Swapping runners $l+1+k$ and $l+k$, the assertion of (a) follows in this case. Swapping runners $1$ and $0$, the assertion of (b)
follows in this case.

\medskip

For the second half of (a) and (b), suppose now that $p+1\leq k+l\leq 2p-1$. We also display $(k,1^l)$ on an abacus with $l+1+2p$ beads. Note that
we must have $2\leq k\leq p$ and $1<l\leq p-1$, since $k+l\geq p+2$ and $(k,1^l)$ is a $p$-core.
We consider the last two rows of this abacus:

\begin{center}
\begin{tabular}{ccccccccccc}
  & \multicolumn{7}{c}{$l$}&\multicolumn{3}{c}{$y$}\\
 & \multicolumn{7}{c}{}&\multicolumn{3}{c}{\downbracefill}\\
  \multicolumn{9}{c}{\downbracefill}&&\\
 $-$ & $\bullet$ & $\cdots$ & $\bullet$ & $\bullet$ & $\bullet$ & $\cdots$ & $\bullet$ & $-$ & $\cdots$ & $-$\\
 $-$ & $-$ & $\cdots$ & $-$  & $\bullet$ & $-$ & $\cdots$  & $-$ & $-$ & $\cdots$ & $-$\\
 \multicolumn{4}{c}{\upbracefill}&&&&&&&\\
 \multicolumn{4}{c}{$x$}&&&&&&&
\end{tabular}
\end{center}

\smallskip

Here $x+y+1=k$ and $x\geq 2$. So assertion (a) follows by swapping runners $x$ and $x+1$, and assertion (b) follows by swapping runners
$0$ and $1$.

\medskip

As for assertion (c), first note that $B_{k,l}$ can only form a $(2:2)$-pair with the block $B_{k-1,l-1}$, since
the core $(k',1^{l'})$ is obtained from $(k,1^l)$ by removing two nodes of the same $p$-residue.

Next we show that, whenever $k+l=p+1$, the block $B$ forms a $(2:2)$-pair with $B_{k-1,l-1}$.
So let $k+l=p+1$. Note that then we necessarily have $k>1$ and $l\geq 1$. The last two rows of the abacus display 
of $(k,1^l)$ with $l+1+2p$ beads are

\begin{center}
\begin{tabular}{cccccccc}
&\multicolumn{4}{c}{$l$}&&&\\
&\multicolumn{4}{c}{\downbracefill}&&&\\
$-$&$\bullet$&$\bullet$& $\cdots$&$\bullet$&$-$&$\cdots$&$-$\\
$-$&$\bullet$&$-$&$\cdots$&$-$&$-$&$\cdots$&$-$
\end{tabular}
\end{center}

\smallskip

We swap the first two runners and get

\begin{center}
\begin{tabular}{cccccccc}
&&\multicolumn{3}{c}{$l-1$}&&&\\
&&\multicolumn{3}{c}{\downbracefill}&&&\\
$\bullet$&$-$&$\bullet$& $\cdots$&$\bullet$&$-$&$\cdots$&$-$\\
$\bullet$&$-$&$-$&$\cdots$&$-$&$-$&$\cdots$&$-$
\end{tabular}
\end{center}

\smallskip

This is the abacus display of the partition $(p-(l-1)-1,1^{l-1})=(k-1,1^{l-1})$.

To complete the proof of (c), it remains to verify that, whenever $2\leq k+l\leq p-1$ or $p+3\leq k+l\leq 2p-1$, the block $B$ cannot
form a $(2:2)$-pair with $B_{k-1,l-1}$, provided the partition $(k-1,1^{l-1})$ exists and is a $p$-core. So suppose it does, and suppose that
$B$ and $B_{k-1,l-1}$ form a $(2:2)$-pair. Then $k\geq 2$, $l\geq 1$, and the two removable nodes of $(k,1^l)$ must have the same
$p$-residue. Hence $k-1\equiv p-l\pmod{p}$ and $k+l\equiv 1\pmod{p}$, a contradiction. 
\end{proof}

Note that, for $k\geq 3$, there cannot be any $(2:k)$-pair of blocks
of $p$-weight $2$ whose $p$-cores are hook partitions.
Thus, as an immediate consequence of Proposition~\ref{prop (2:1) pairs hooks}(c), we have

\begin{cor}\label{cor Scopes hooks}
There are precisely $(p-1)^2+1$ Scopes classes of $p$-blocks of symmetric groups of $p$-weight $2$ whose $p$-cores
are hook partitions. Representatives of these are given by the blocks $B_{k,l}$, where 
\begin{itemize}
\item[\rm{(i)}] either $0\leq k+l\leq p-1$, 
\item[\rm{ (ii)}] or $p+2\leq k+l\leq 2p-1$, $0\leq k<p+1$ and $0\leq l <p$.
\end{itemize}
\end{cor}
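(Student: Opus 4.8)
The plan is to read the statement off Proposition~\ref{prop (2:1) pairs hooks}(c) together with the fact, recalled in \ref{noth abacus}, that the Scopes equivalence relation on weight-$2$ blocks is generated by $(2:k)$-pairs with $k\geq 2$. First I would observe that a hook partition has pairwise distinct box contents, so a hook $p$-core has at most two removable boxes of any fixed $p$-residue, and removing two such boxes yields again a hook; hence, as noted just before the statement, only $(2:2)$-pairs between weight-$2$ hook-core blocks need be considered, and by Proposition~\ref{prop (2:1) pairs hooks}(c) such a $(2:2)$-pair is precisely a pair $(B_{k,l},B_{k-1,l-1})$ with $k+l=p+1$. The requirement that both $(k,1^l)$ and $(k-1,1^{l-1})$ be $p$-cores forces $2\leq k\leq p$ and $1\leq l\leq p-1$, giving exactly $p-1$ such pairs, one for each $k\in\{2,\dots,p\}$.

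Next I would note that these $p-1$ pairs form a perfect matching between the hook $p$-cores of size $p-1$ and those of size $p+1$: the assignment $(k,1^l)\mapsto(k-1,1^{l-1})$ is a bijection between these two sets, and by Proposition~\ref{prop (2:1) pairs hooks}(c) no weight-$2$ hook-core block lies in two distinct $(2:2)$-pairs (a block of core-size $p-1$ can only be the smaller member of one, a block of core-size $p+1$ only the larger member of one, and a block of any other core-size lies in none). Hence the Scopes classes of weight-$2$ hook-core blocks are exactly the $p-1$ two-element classes $\{B_{k,l},B_{k-1,l-1}\}$ with $k+l=p+1$, together with a singleton class for every weight-$2$ hook-core block whose core has size $\leq p-2$ or $\geq p+2$. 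Choosing from each two-element class its member of core-size $p-1$, a complete and irredundant list of representatives is given by the blocks $B_{k,l}$ with core of size $\leq p-1$ or size $\geq p+2$, i.e.\ precisely the blocks in cases (i) and (ii); irredundancy holds since two such blocks have distinct cores and can never form a $(2:2)$-pair, the larger member of every $(2:2)$-pair being excluded from the list.

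Finally I would count. A hook $p$-core of size $s$ is determined by its arm length: there is one of size $0$ (the empty partition), $s$ of size $s$ for $1\leq s\leq p-1$, none of size $p$, and $2p-s$ of size $s$ for $p+1\leq s\leq 2p-1$ (the conditions $k\leq p$, $l\leq p-1$ trimming the count). Thus the number of weight-$2$ hook-core blocks is $1+\sum_{s=1}^{p-1}s+\sum_{s=p+1}^{2p-1}(2p-s)=1+\binom{p}{2}+\binom{p}{2}=p^2-p+1$, and removing the $p-1$ Scopes identifications leaves $(p^2-p+1)-(p-1)=(p-1)^2+1$ classes; equivalently, the lists (i) and (ii) contain $\bigl(1+\binom{p}{2}\bigr)+\binom{p-1}{2}=(p-1)^2+1$ blocks. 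The step I would be most careful about is this combinatorial bookkeeping: verifying that the $(2:2)$-matching admits no chains of length exceeding one and that the displayed representatives are genuinely pairwise non-equivalent, i.e.\ that distinct admissible parameter pairs $(k,l)$ yield distinct $p$-cores under the convention that a nonempty hook is written as $(k,1^l)$ with $k\geq 1$.
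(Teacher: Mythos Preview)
Your proposal is correct and follows exactly the approach the paper takes: the paper merely notes that no $(2:k)$-pair with $k\geq 3$ can occur among weight-$2$ hook-core blocks and then declares the corollary an immediate consequence of Proposition~\ref{prop (2:1) pairs hooks}(c), whereas you spell out the matching between core-sizes $p-1$ and $p+1$ and carry out the count explicitly. One small remark: a hook has at most two removable nodes \emph{in total}, so the ``pairwise distinct box contents'' observation is not really needed to bound $k$ or to see that removing both nodes again yields a hook.
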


The next corollary is a consequence of Lemma~\ref{lemma l and r}.

\begin{cor}\label{cor l and r hooks}
Let $k,l\in \NN_0$ be such that $(k,1^l)$ is a $p$-core partition. Moreover, let $n:=k+l+2p$, and let $B$ be the block
$B_{k,l}$ of $F\mathfrak{S}_n$. 
Display all partitions under considerations on an $[m_1,\ldots,m_p]$-abacus with $l+1+2p$ beads.
With the notation as in Lemma~\ref{lemma l and r}, one has the following:

\begin{enumerate}

\item[{\rm (a)}] Suppose that $1\leq k+l\leq p-1$ and $k>1$, so that $B$ forms a $(2:1)$-pair with the block $\bar{B}:=B_{k-1,l}$ of $F\mathfrak{S}_{n-1}$. Then
$l_1=l$, $l_2=0$, $r_2=0$ and $r_1=p-k-l-1$; in particular, $d=p-k-1$ and $l_1+r_1=p-k-1<p-2$. Moreover,
one has

\begin{center}
\begin{tabular}{|c||c|c|c|c|c|c|}\cline{2-7}
\multicolumn{1}{c||}{}& $\bar{\alpha}$&$\bar{\beta}$&$\bar{\gamma}$&$\alpha$&$\beta$&$\gamma$\\\hline\hline
$p$-regular&$\checkmark$&$\checkmark$&$\checkmark$&$\checkmark$&$\checkmark$&$\checkmark$\\\hline
$p$-restricted&$-$&$k<p-1$&$\checkmark$&$k<p-1$&$-$&$\checkmark$\\\hline
\end{tabular}
\end{center}

\item[{\rm (b)}] Suppose that $1\leq k+l\leq p-1$, $k\geq 1$ and $l\geq 1$, so that $B$ forms a $(2:1)$-pair with the block $\bar{B}:=B_{k,l-1}$
of $F\mathfrak{S}_{n-1}$. Then $l_1=l_2=0$, $r_1=p-2$, and $r_2=l$; in particular, $d=p-2-l$, $l_1+r_1=p-2$ and $l_2+r_2=l$. Moreover,

\begin{center}
\begin{tabular}{|c||c|c|c|c|c|c|}\cline{2-7}
\multicolumn{1}{c||}{}& $\bar{\alpha}$&$\bar{\beta}$&$\bar{\gamma}$&$\alpha$&$\beta$&$\gamma$\\\hline\hline
$p$-regular&$\checkmark$&$l<p-2$&$-$&$\checkmark$&$-$&$l<p-2$\\\hline
$p$-restricted&$\checkmark$&$\checkmark$&$\checkmark$&$\checkmark$&$\checkmark$&$\checkmark$\\\hline
\end{tabular}
\end{center}

\item[{\rm (c)}] Suppose that $p+2\leq k+l\leq 2p-1$ and $k>1$, so that $B$ forms a $(2:1)$-pair with the block $\bar{B}:=B_{k-1,l}$ of $F\mathfrak{S}_{n-1}$. Then
$l_1=l$, $l_2=0$, $r_2=0$ and $r_1=p-k$; in particular, $d=p-k$ and $l_1+r_1=p-k$ and $l_2+r_2=0$. Moreover,

\begin{center}
\begin{tabular}{|c||c|c|c|c|c|c|}\cline{2-7}
\multicolumn{1}{c||}{}& $\bar{\alpha}$&$\bar{\beta}$&$\bar{\gamma}$&$\alpha$&$\beta$&$\gamma$\\\hline\hline
$p$-regular&$\checkmark$&$\checkmark$&$\checkmark$&$\checkmark$&$\checkmark$&$\checkmark$\\\hline
$p$-restricted&$-$&$2<k<p$&$\checkmark$&$2<k<p$&$-$&$\checkmark$\\\hline
\end{tabular}
\end{center}

\item[{\rm (d)}] Suppose that $p+2\leq k+l\leq 2p-1$, $k\geq 1$ and $l\geq 1$, so that $B$ forms a $(2:1)$-pair with the block $\bar{B}:=B_{k,l-1}$
of $F\mathfrak{S}_{n-1}$. Then $l_1=l_2=0$, $r_1=p-2$, and $r_2=l-1$; in particular, $d=p-1-l$, $l_1+r_1=p-2$ and $l_2+r_2=l-1$. Moreover,

\begin{center}
\begin{tabular}{|c||c|c|c|c|c|c|}\cline{2-7}
\multicolumn{1}{c||}{}& $\bar{\alpha}$&$\bar{\beta}$&$\bar{\gamma}$&$\alpha$&$\beta$&$\gamma$\\\hline\hline
$p$-regular&$\checkmark$&$l<p-1$&$-$&$\checkmark$&$-$&$l<p-1$\\\hline
$p$-restricted&$\checkmark$&$\checkmark$&$\checkmark$&$\checkmark$&$\checkmark$&$\checkmark$\\\hline
\end{tabular}
\end{center}
\end{enumerate}
\end{cor}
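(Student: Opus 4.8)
The plan is to derive everything directly from Lemma~\ref{lemma l and r}: once, in each of the four situations (a)--(d), we have identified the four numbers $l_1,l_2,r_1,r_2$ attached to the $(2:1)$-pair in question -- and hence which of the six cases (1)--(6) of Remark~\ref{rem l and r} occurs -- the claims on $p$-regularity and $p$-restrictedness are just the corresponding rows of the table in Lemma~\ref{lemma l and r}, while the stated value of $d$ is the number $l_1+r_1-l_2-r_2$ (the $\partial$-values of the six exceptional partitions then being $d$ or $d+1$ as recorded in Lemma~\ref{lemma l and r}).

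To pin down $l_1,l_2,r_1,r_2$ I would start from the abacus display of $(k,1^l)$ on $l+1+2p$ beads that is drawn in the proof of Proposition~\ref{prop (2:1) pairs hooks}, together with the explicit pair $\{i-1,i\}$ of interchanged runners identified there: for (a) and (c) the runner $i$ is the one carrying the single ``isolated'' bead produced by the first part $k$, and for (b) and (d) it is the second runner, the swap being of the two left-most runners. Since each exceptional partition differs from its $p$-core only by lowering two beads, both lying on runner $i-1$ or both on runner $i$, all six exceptional partitions of both blocks agree with the $p$-core on every runner other than $i-1$ and $i$; counting the beads that survive to the left of runner $i-1$, respectively to the right of runner $i$, in the bottom rows where the configurations $\langle i,i\rangle$, $\langle i-1,i-1\rangle$, $\langle i\rangle$ live then produces $l_1,l_2,r_1,r_2$. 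Carrying this out yields, as asserted, $l_2+r_2=0$ and $l_1+r_1=p-k-1$ in (a); $l_1+r_1=p-2$ and $l_2+r_2=l$ in (b); $l_2+r_2=0$ and $l_1+r_1=p-k$ in (c); and $l_1+r_1=p-2$ and $l_2+r_2=l-1$ in (d); hence $d=l_1+r_1-l_2-r_2$ equals $p-k-1$, $p-2-l$, $p-k$, $p-1-l$ respectively.

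It then remains to read off the case. In (a) and (c) we have $l_2+r_2=0$, so the pair lies in case (1) when $l_1+r_1=0$ and in case (2) otherwise; using $2\le k\le p-1$ with $l=0$ in (a) and $3\le k\le p$ in (c) (the inequalities $p+2\le k+l\le 2p-1$ together with $l\le p-1$ forcing $k\ge 3$), case (1) occurs precisely for $k=p-1$ in (a) and for $k=p$ in (c), and case (3) never occurs since $l_1+r_1\le p-3$. In (b) and (d) we have $0<l_2+r_2$ and $l_1+r_1=p-2$, so the pair lies in case (5) when $l_2+r_2=p-2$ and in case (4) otherwise (using $1\le l\le p-2$ in (b) and $2\le l\le p-1$ in (d), the latter again forced by $p+2\le k+l$, $k\le p$), which means case (5) for $l=p-2$ in (b) and for $l=p-1$ in (d). A glance at Lemma~\ref{lemma l and r} shows that cases (1) and (2) give identical $p$-regularity/$p$-restrictedness data for the six exceptional partitions apart from $\bar\beta$ and $\alpha$ being $p$-restricted in case (2) but not in case (1), and that cases (4) and (5) differ only in that $\bar\beta$ and $\gamma$ are $p$-regular in case (4) but not in case (5). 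These are exactly the conditional entries ``$k<p-1$'', ``$2<k<p$'', ``$l<p-2$'', ``$l<p-1$'' of the four tables, and every unconditional entry of the tables coincides with the corresponding entry in Lemma~\ref{lemma l and r}.

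The one genuinely fiddly step is the middle one: placing the two lowest beads on runners $i-1$ and $i$ of the $p$-core correctly, checking via \ref{noth abacus}(a) that no further bead is movable once the two rim $p$-hooks are reinstated -- that is, that $(k,1^l)$ is indeed a $p$-core -- and then counting, row by row, the beads surviving on the remaining runners at the heights of $\langle i,i\rangle$, $\langle i-1,i-1\rangle$ and $\langle i\rangle$. The four boundary values $k\in\{p-1,p\}$ and $l\in\{p-2,p-1\}$, where one passes from case (2) to case (1) or from case (4) to case (5), are precisely what force the conditional table entries; away from them the statement is a direct table look-up.
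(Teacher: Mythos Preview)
Your proposal is correct and follows essentially the same approach as the paper's own proof: identify which of the cases (1)--(6) of Remark~\ref{rem l and r} applies by computing the sums $l_1+r_1$ and $l_2+r_2$, and then read off the regularity/restrictedness data from the table in Lemma~\ref{lemma l and r}. The paper's proof does exactly this, only more tersely --- it simply states which case of Remark~\ref{rem l and r} arises for each range of $k$ (respectively $l$) and then says the assertions follow from Lemma~\ref{lemma l and r}; you have supplied the abacus bookkeeping that the paper leaves implicit.

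Two small remarks. First, your parenthetical ``with $l=0$ in (a)'' reads as if $l=0$ throughout case~(a); what you mean (and what the paper also notes) is that the boundary value $k=p-1$ \emph{forces} $l=0$, since $k+l\le p-1$. Second, your phrase ``both lying on runner $i-1$ or both on runner $i$'' is not quite right for $\beta=\langle i-1,i\rangle$ and $\bar\beta$, where one moved bead sits on each runner; the point you actually need, and which your argument uses correctly, is only that every exceptional partition agrees with the core on all runners other than $i-1$ and $i$, so that the counts $l_1,l_2,r_1,r_2$ are determined by the core display. Neither of these affects the validity of your argument.
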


\begin{proof}
Under the assumptions of (a), we are in case (2) of Remark~\ref{rem l and r} if $k<p-1$, and in case (1) if $k=p-1$ (and $l=0$). Under
the assumptions of (b), we are in case (4) of Remark~\ref{rem l and r} if $l<p-2$, and in case (5) if $l=p-2$ (and $k=1$).
Under the assumptions of (c), we are in case (1) if $k=p$, and in case (2) if $2<k<p$; note that $k=2$ is not possible. Lastly, under 
the assumptions of (d), we are in case (4) if $l<p-1$, and in case (5) if $l=p-1$. Hence, the assertions follow form Lemma~\ref{lemma l and r}.
\end{proof}

\begin{lemma}\label{lemma alpha max}
Let $k,l\in \NN_0$ be such that $(k,1^l)$ is a $p$-core partition. Moreover, let $n:=k+l+2p$, and let $B$ be the block
$B_{k,l}$ of $F\mathfrak{S}_n$. 

\begin{enumerate}
\item[{\rm (a)}] Suppose that $k>1$, so that $B$ forms a $(2:1)$-pair with the block $\bar{B}:=B_{k-1,l}$ of $F\mathfrak{S}_{n-1}$.
Then $\bar{\alpha}:=\bar{\alpha}(B,\bar{B})$ is the lexicographically largest partition of $\bar{B}$ with $\partial$-value $\partial(\bar{\alpha})$.

\item[{\rm (b)}] Suppose that $k\geq 1$ and $l\geq 1$, so that $B$ forms a $(2:1)$-pair with the block $\bar{B}:=B_{k,l-1}$
of $F\mathfrak{S}_{n-1}$. Then $\bar{\alpha}:=\bar{\alpha}(B,\bar{B})$ is the lexicographically smallest $p$-regular partition of $\bar{B}$ with
$\partial$-value $\partial(\bar{\alpha})$.
\end{enumerate}
\end{lemma}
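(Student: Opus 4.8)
The plan is to compute directly on the abacus with $l+1+2p$ beads used throughout this section, continuing the bookkeeping of Proposition~\ref{prop (2:1) pairs hooks} and Corollary~\ref{cor l and r hooks}. Recall that on a fixed abacus the lexicographic order on partitions is the lexicographic order on the decreasing sequences of bead positions, so that the first part $\mu_1$ of a partition $\mu$ is a strictly increasing function of the largest position occupied by a bead of $\Gamma_\mu$. Recall also from \ref{noth abacus} and \ref{noth partial}(a) that every partition of the weight-$2$ block $\bar B$ is obtained from $\kappa_{\bar B}$ by one of the weight-$2$ slides described in \ref{noth abacus}(b), and that its $\partial$-value is the absolute difference of the leg lengths of the two rim $p$-hooks removed on the way back to $\kappa_{\bar B}$ (independence of the order of removal being \cite[Lemma~4.1]{Richards1996}).

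For part (a): By Corollary~\ref{cor l and r hooks}(a),(c) we are in configuration (1) or (2) of Remark~\ref{rem l and r}, so $l_2=r_2=0$; writing $\kappa_{\bar B}=(k-1,1^l)$ on the $(l+1+2p)$-bead abacus as in the proof of Proposition~\ref{prop (2:1) pairs hooks}, the abacus of $\bar\alpha=\langle i\rangle$ is obtained from that of $\kappa_{\bar B}$ by sliding the lowest bead on runner $i$ two rows further down, whence one reads off $\bar\alpha_1$ (for instance $\bar\alpha_1=p+k$ when $1\le k+l\le p-1$). The argument then rests on two observations. First, $\bar\alpha$ is the \emph{unique} partition of $\bar B$ whose first part equals $\bar\alpha_1$: to place a bead at the position this forces, a single bead of $\kappa_{\bar B}$ must be slid two rows down onto exactly that position, and this uses up all of the weight. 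Second, every partition $\mu$ of $\bar B$ with $\mu_1>\bar\alpha_1$ is of type $\langle j\rangle$, obtained by sliding a single bead of $\kappa_{\bar B}$ two rows down either on a runner to the right of $i$, or on a runner carrying a bead in the lowest non-empty row of $\Gamma_{\kappa_{\bar B}}$; a short leg-length computation shows $\partial(\mu)\le d<d+1=\partial(\bar\alpha)$ for every such $\mu$. Together these give: any partition $\mu$ of $\bar B$ with $\partial(\mu)=\partial(\bar\alpha)$ satisfies $\mu_1\le\bar\alpha_1$, with equality only for $\mu=\bar\alpha$, so $\bar\alpha$ is lexicographically largest. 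The computation is carried out along cases (a) and (c) of Corollary~\ref{cor l and r hooks}, and within each along the two configurations involved, but is uniform.

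For part (b): By Corollary~\ref{cor l and r hooks}(b),(d) we are in configuration (4) or (5), so $l_1=l_2=0$, and $\bar\alpha=\langle i\rangle$ is obtained by sliding the lowest bead on runner $i$ two rows down, where $i$ is now the runner produced by the runner swap of Proposition~\ref{prop (2:1) pairs hooks}(b), so that this bead lands on a position of small column index. Exactly as before, $\bar\alpha$ is the unique partition of $\bar B$ with first part equal to $\bar\alpha_1$. To conclude, one shows that every \emph{$p$-regular} partition $\mu$ of $\bar B$ with $\mu_1<\bar\alpha_1$ has $\partial(\mu)\ne\partial(\bar\alpha)$: such $\mu$ arises from $\kappa_{\bar B}$ by slides keeping all beads within the first few rows, hence removes two rim $p$-hooks of small leg length; enumerating the finitely many possibilities — and discarding those forcing $p$-singularity — one checks that $\partial(\mu)$ stays strictly below $d+1=\partial(\bar\alpha)$. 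Together with the uniqueness statement this yields that $\bar\alpha$ is the lexicographically smallest $p$-regular partition of $\bar B$ with $\partial$-value $\partial(\bar\alpha)$. (The restriction to $p$-regular partitions cannot be dropped: there are $p$-singular partitions of $\bar B$ with the same $\partial$-value that are lexicographically smaller than $\bar\alpha$.) Again the analysis splits along cases (b),(d) of Corollary~\ref{cor l and r hooks} and configurations (4),(5).

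No single step is conceptually hard. The real work, and the place where care is needed, lies in identifying $\bar\alpha$ explicitly on each abacus and, above all, in the leg-length computations establishing that the slides producing a larger (for (a)) or smaller (for (b)) first part than $\bar\alpha$ always yield a $\partial$-value different from $\partial(\bar\alpha)$; the boundary configurations (1) and (5) have to be checked separately, since there the set of partitions attaining a prescribed $\partial$-value differs slightly from the generic case.
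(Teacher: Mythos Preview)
Your approach is a direct abacus computation, whereas the paper's proof is a two-line application of the $\lambda_\pm$ machinery from \ref{noth partial}(c) (going back to Richards and Chuang--Tan). For (a), the paper simply observes that if some $\mu>\bar\alpha$ with $\partial(\mu)=\partial(\bar\alpha)$ existed, then the lex-smallest such $\mu$ would be $(\bar\alpha)_+$, forcing $\bar\alpha$ to be $p$-restricted, contrary to Corollary~\ref{cor l and r hooks}(a),(c). For (b), if a $p$-regular $\mu<\bar\alpha$ with $\partial(\mu)=\partial(\bar\alpha)$ existed, the lex-largest such $\mu$ would be $(\bar\alpha)_-$; but $(\bar\alpha)_-=\mathbf m(\bar\alpha)'=\bar\gamma$, and $\bar\gamma$ is $p$-singular by Corollary~\ref{cor l and r hooks}(b),(d). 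That is the entire argument.

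Your route is viable in principle and has the virtue of being self-contained (no appeal to \cite{ChuangTan2001}), but what you have written is a strategy, not a proof: the sentences ``a short leg-length computation shows $\partial(\mu)\le d$'' and ``enumerating the finitely many possibilities \ldots\ one checks that $\partial(\mu)$ stays strictly below $d+1$'' are exactly where all the content lies, and you have not carried them out. In particular, for (b) the enumeration is not as short as you suggest, and you must be explicit about why every $p$-regular $\mu$ with $\mu_1<\bar\alpha_1$ has $\partial(\mu)\ne d+1$ while the $p$-singular $\bar\gamma$ (which does have $\partial(\bar\gamma)=d+1$) slips through; merely saying ``discarding those forcing $p$-singularity'' is not an argument. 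If you want to keep your approach, write out the case analysis in full for at least one of (a),(c) and one of (b),(d); otherwise, the paper's use of the existence criterion for $\lambda_\pm$ gives the result almost for free once Corollary~\ref{cor l and r hooks} is in hand.
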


\begin{proof}
(a)\, Assume that there is a partition $\mu$ of $n-1$ with $p$-core $(k-1,1^l)$, $\partial(\mu)=\partial(\bar{\alpha})$ and
$\bar{\alpha}<\mu$. Then we may choose $\mu$ to be the lexicographically smallest such partition.  By \cite[Remark 2.1]{ChuangTan2001},
this forces $\mu=(\bar{\alpha})_+$ and $\bar{\alpha}$ to be $p$-restricted. But this contradicts Corollary~\ref{cor l and r hooks}(a) and (c).

\smallskip

(b)\, Assume that there is a $p$-regular partition $\mu$ of $n-1$ with $p$-core $(k,1^{l-1})$, $\partial(\bar{\alpha})=\partial(\mu)$ and
$\mu<\bar{\alpha}$. Then we may choose $\mu$ to be the lexicographically largest such partition. Then \cite[Remark 2.1]{ChuangTan2001}
gives $\mu=(\bar{\alpha})_-$. But, by \ref{noth partial} and Theorem~\ref{thm Loewy except}, we know that $\mu=(\bar{\alpha})_-=\bar{\gamma}$ and $\bar{\gamma}$ is not $p$-regular, by
Corollary~\ref{cor l and r hooks} (b) and (d), a contradiction.
\end{proof}

\begin{noth}\label{noth strategy 1}{\bf Strategy of proof.}\,
We are now in the position to prove Theorem~\ref{thm main1 details}. Our most important ingredients will be the results
concerning the Ext-quivers of the principal blocks of $F\mathfrak{S}_{2p}$ and $F\mathfrak{S}_{2p+1}$ in Appendix~\ref{sec B0} 
together with Theorem~\ref{thm Loewy except}. The notation used throughout the proof will thus be chosen in accordance with
that fixed in Theorem~\ref{thm Loewy except}. As well, whenever $(B,\bar{B})$ is a $(2:1)$-pair of blocks of weight 2 and $\mu$ is
a good partition of $\bar{B}$, we denote the corresponding good partition $\Phi^{-1}(\mu)$ by $\hat{\mu}$, as in \ref{noth (2:1)}.

\smallskip

In the proof of Theorem~\ref{thm main1 details} below, we shall argue by induction on $k+l$, and give full details in the case where $k+l\leq p-1$. In doing so, we
shall consider the following subcases: 

\begin{itemize}
\item $k=l=0$, or $k=1$ and $l=0$, when $B_{k,l}$ is the principal
block of $F\mathfrak{S}_{2p}$ or the principal block of $F\mathfrak{S}_{2p+1}$; these are covered by Theorem~\ref{thm quiver B0};
\item $1<k\leq p-3$, when we are in case (2) of 
Theorem~\ref{thm Loewy except};
\item $k=1$ and $1\leq l<p-3$, when we are case (4) of Theorem~\ref{thm Loewy except};
\item $k=p-1$ and $l=0$, when we are in case (1) of Theorem~\ref{thm Loewy except};
\item $k=1$ and $p-3\leq l\leq p-2$, when we are in case (4) and (5), respectively, of Theorem~\ref{thm Loewy except}.
\end{itemize}

The assertion in the case where $p+1\leq k+l$, can then be obtained completely analogously, so that we shall leave the details 
to the reader.  Moreover, we should like to emphasize that, for all $p$-cores $(k,1^l)$ with $k\geq 1$,  the blocks
$B_{k,l}$ and $B_{l+1,k-1}$ are isomorphic via tensoring with the sign representation, since $(l+1,1^{k-1})=(k,1^l)'$.
In particular, the Ext-quivers of $B_{k,l}$ and $B_{l+1,k-1}$ are isomorphic as undirected graphs. Hence, if we were only interested 
in the structure of the Ext-quivers as undirected graphs, we would only need to examine half of the blocks. Since, however, we 
also want to give more detailed information on the lexicographic ordering and the $\partial$-values of the 
$p$-regular partitions in the blocks occurring in Theorem~\ref{thm main1 details}, we shall treat all
blocks via our inductive arguments.
\end{noth}

\begin{proof}(of Theorem~\ref{thm main1 details})
In the following, set $B:=B_{k,l}$ and $\kappa:=\kappa_B:=(k,1^l)$. We argue by induction on $k+l$, and suppose first that $k+l\leq p-1$. If $k=l=0$, then 
$B$ is the principal block of $F\mathfrak{S}_{2p}$, which has Ext-quiver $Q_{0,0}(p)$, by Theorem~\ref{thm quiver B0}. If $k+l=1$, then $B$ is the principal
block of $F\mathfrak{S}_{2p+1}$, which has Ext-quiver $Q_{1,0}(p)$, by Theorem~\ref{thm quiver B0}. Thus, from now on we may suppose that $k+l>1$. 
Suppose, moreover, that $1<k\leq p-3$. Then $B$ forms a $(2:1)$-pair with the block $\bar{B}:=B_{k-1,l}$, by Proposition~\ref{prop (2:1) pairs hooks}.
By induction, $\bar{B}$ has Ext-quiver $Q_{k-1,l}(p)$. 
By Corollary~\ref{cor l and r hooks} and Lemma~\ref{lemma alpha max}, we know that $\bar{\alpha}$ is the largest partition of $n-1$ with $p$-core
$\kappa_{\bar{B}}$ and $\partial$-value $p-k$.
By \ref{noth partial Scopes}, it now suffices to consider the following part of the quiver of $\bar{B}$, whose vertices lie
on those rows with $\partial$-values $p-k-2,p-k-1,p-k$ and $p-k+1$.

\begin{center}
\begin{tikzpicture}
\coordinate[label=left:$\bar{\beta}_+$] (B+) at (0.5,-0.5);
\coordinate[label=above:$\mu$] (M) at (1,0);
\coordinate[label=above:$\bar{\beta}$] (B) at (1.5,-0.5);
\coordinate[label=below:$\bar{\alpha}$] (A) at (1,-1);
\coordinate[label=below:$\bar{\gamma}$] (G) at (1.5,-1);
\draw[con] (0,0) node{$\bullet$} -- (B+) node{\color{red} $\bullet$};
\draw[con] (B+) -- (M) node{\color{red} $\bullet$};
\draw[con] (B) node{\color{red} $\bullet$} -- (G) node{\color{red} $\bullet$};
\draw[con] (M)  -- (B) ;
\draw[con] (B+) -- (A) node{\color{red} $\bullet$};
\draw[con] (A)  -- (B) ;
\draw[con] (B) -- (2,0) node{$\bullet$};
\draw[con] (B) -- (2,-1) node{$\bullet$};
\draw[con] (G) -- (2,-1.5) node{$\bullet$};
\draw[con]  (2,-1.5) -- (2,-1) node{$\bullet$};
\draw[con]  (2,-1) -- (2.5,-0.5) node{$\bullet$};
\draw[con]  (2,0) -- (2.5,-0.5) node{$\bullet$};
\end{tikzpicture}
\end{center}

We identify the labels of the red vertices. 
To do so, we use Corollary~\ref{cor l and r hooks} and Theorem~\ref{thm Loewy except}.
First, by Corollary~\ref{cor l and r hooks}, $\bar{\gamma}$ and $\bar{\beta}$ are
$p$-regular. By Theorem~\ref{thm Loewy except}, we then know that $\bar{\alpha}=\bar{\gamma}_+$, so that $\bar{\gamma}$ is the vertex to the right of $\bar{\alpha}$.
Moreover, by Theorem~\ref{thm Loewy except}, $\bar{\gamma}$ is connected to $\bar{\beta}$, and $\partial(\bar{\beta})=\partial(\bar{\gamma})-1$, by Corollary~\ref{cor l and r hooks}.
Since, by Lemma~\ref{lemma Ext alpha}, $\bar{\beta}$ is also connected to $\bar{\alpha}$, this identifies the positions of $\bar{\beta}$
and $\bar{\beta}_+$. By induction, we also know that $\mu>\bar{\beta}$. Hence, by \cite[Theorem 6.1]{ChuangTan2001},
 we must have $[S^{\bar{\beta}}:D^\mu]\neq 0$, and then $\bar{Y}\cong D^\mu$, in the notation of Theorem~\ref{thm Loewy except}.
  This then also forces $\bar{Z}=\{0\}$,
 so that $\bar{\beta}$ is the only common neighbour of $\bar{\alpha}$ and $\bar{\gamma}$.
 So, by Corollary~\ref{cor l and r hooks}, Theorem~\ref{thm Loewy except} and \ref{noth partial Scopes}, we deduce the
 following information on the corresponding part of the quiver of $B$, whose vertices again lie in rows with $\partial$-values
  $p-k-2,p-k-1,p-k$ and $p-k+1$:

\begin{center}
 \begin{tikzpicture}
\coordinate[label=left:$\alpha_+$] (A+) at (0.5,-0.5);
\coordinate[label=above:$\hat{\mu}$] (MH) at (1,0);
\coordinate[label=left:$\alpha$] (A) at (1,-0.5);
\coordinate[label=left:$\beta$] (B) at (1.5,-1);
\coordinate[label=above:$\gamma$] (G) at (1.5,-0.5);
\draw[con] (0,0) node{$\bullet$} -- (A+) node{\color{blue} $\bullet$};
\draw[con] (A+) -- (MH) node{\color{blue} $\bullet$} ;
\draw[con] (MH)  -- (G) node{\color{blue} $\bullet$};
\draw[con] (MH)  -- (A) node{\color{blue} $\bullet$};
\draw[con] (A) -- (B) node{\color{blue} $\bullet$};
\draw[con] (B) -- (G);
\draw[con] (B) -- (2,-1.5) node{$\bullet$};
\draw[con] (G) -- (2,-1) node{$\bullet$};
\draw[con] (G) -- (2,0) node{$\bullet$};
\draw[con] (2,-1) -- (2.5,-0.5) node{$\bullet$};
\draw[con] (2,0) -- (2.5,-0.5);
\draw[con]  (2,-1.5) -- (2,-1);
\end{tikzpicture}

\end{center}

Here $D^{\hat{\mu}}\cong Y$. Since, by Corollary~\ref{cor l and r hooks}, $\beta$ is not $p$-restricted, $\beta_+$ does not exist, so that
$\alpha$ is only connected to $\hat{\mu}$ and $\beta$, by Lemma~\ref{lemma Ext alpha} and Theorem~\ref{thm Loewy except}; 
in particular, we have $\hat{\mu}>\alpha$ as well as $\beta>\gamma$.
This shows that $B$ has quiver $Q_{k,l}(p)$.

\smallskip

Next suppose that $k=1$ and $1\leq l<p-3$. Then, by Proposition~\ref{prop (2:1) pairs hooks}, $B$ forms a $(2:1)$-pair with the 
block $\bar{B}:=B_{k,l-1}$. By induction, $\bar{B}$ has quiver $Q_{k,l-1}(p)$. By Corollary~\ref{cor l and r hooks} and Lemma~\ref{lemma alpha max},
$\bar{\alpha}$ is the smallest $p$-regular partition of $n-1$ with $p$-core $\kappa_{\bar{B}}$ and $\partial$-value $p-1-l$. So, by 
\ref{noth partial Scopes}, we only need to consider the following part of the quiver of $\bar{B}$, whose vertices 
lie in rows with $\partial$-values $p-l-3,p-l-2,p-l-1$ and $p-l$:

\begin{center}
\begin{tikzpicture}
\coordinate[label=above:$\mu$] (M) at (1,0);
\coordinate[label=above:$\rho$] (R) at (2,0);
\coordinate[label=above:$\bar{\beta}$] (B) at (1.5,-0.5);
\coordinate[label= right:$\bar{\alpha}$] (A) at (1,-1);
\coordinate[label=below right:$\bar{\alpha}_+$] (A+) at (0.5,-1);
\draw[con] (0,0) node{$\bullet$} -- (B+) node{\color{red} $\bullet$};
\draw[con] (B+) -- (M) node{\color{red} $\bullet$};
\draw[con] (M) -- (B) node{\color{red} $\bullet$};
\draw[con] (B) -- (R) node{\color{red} $\bullet$};
\draw[con] (B+) -- (A+) node{\color{red} $\bullet$};
\draw[con] (B+) -- (A) node{\color{red} $\bullet$};
\draw[con] (A) -- (B);
\draw[con] (0,-1.5) node{$\bullet$} -- (A+);
\draw[con] (0,-1) node{$\bullet$} -- (0,-1.5);
\draw[con] (0,-1) -- (B+);
\end{tikzpicture}
\end{center}

Again we determine the labels of the red vertices. By Corollary~\ref{cor l and r hooks}, $\bar{\gamma}$ is $p$-singular, while $\bar{\beta}$ is $p$-regular and $p$-restricted, so that $\bar{\beta}_+$ exists. By Theorem~\ref{thm Loewy except} and Lemma~\ref{lemma Ext alpha}, 
$\bar{\alpha}$ is connected to
$\bar{\beta}$ and $\bar{\beta}_+$. This identifies these two vertices. Moreover, by induction, we know that $\mu>\bar{\beta}$, so that 
$D^\mu\cong \bar{Y}$ and $\bar{Z}=\{0\}$. So, by Corollary~\ref{cor l and r hooks}, Theorem~\ref{thm Loewy except} and \ref{noth partial Scopes}, we deduce the
 following information on the corresponding part of the quiver of $B$, whose vertices again lie in rows with $\partial$-values
  $p-l-3,p-l-2,p-l-1$ and $p-l$:

\begin{center}
\begin{tikzpicture}
\coordinate[label=left:$\alpha_+$] (A+) at (0.5,-0.5);
\coordinate[label=above:$\hat{\mu}$] (M) at (1,0);
\coordinate[label=above:$\hat{\rho}$] (R) at (2,0);
\coordinate[label=above:$\gamma$] (G) at (1.5,-0.5);
\coordinate[label=below:$\alpha$] (A) at (1,-0.5);
\coordinate[label=below right:$\beta_+$] (B+) at (0.5,-1);
\draw[con] (0,0) node{$\bullet$} -- (A+) node{\color{blue} $\bullet$};
\draw[con] (A+) -- (M) node{\color{blue} $\bullet$};
\draw[con] (M) -- (G) node{\color{blue} $\bullet$};
\draw[con] (G) -- (R) node{\color{blue} $\bullet$};
\draw[con] (M) -- (A) node{\color{blue} $\bullet$};
\draw[con] (A+) -- (B+) node{\color{blue} $\bullet$};
\draw[con] (B+) -- (A);
\draw[con] (0,-1.5) node{$\bullet$} -- (B+);
\draw[con] (0,-1) node{$\bullet$} -- (0,-1.5);
\draw[con] (0,-1) -- (A+);
\end{tikzpicture}
\end{center}

Here $D^{\hat{\mu}}\cong Y$. Since, by Corollary~\ref{cor l and r hooks}, $\beta$ is $p$-singular, Lemma~\ref{lemma Ext alpha} and
Theorem~\ref{thm Loewy except}
imply that $\beta_+$ and $\hat{\mu}$ are the only neighbours of $\alpha$. 
Since $[S^\alpha:D^{\hat{\mu}}]\neq 0\neq [S^\alpha:D^{\beta_+}]$, we must have $\beta_+>\alpha$ and $\hat{\mu}>\alpha$.
Moreover, since $[S^\rho:D^{\bar{\beta}}]\neq 0$, also $[S^{\hat{\rho}}:D^\gamma]\neq 0$ and $\gamma>\hat{\rho}$.
Consequently, $B$ has quiver $Q_{k,l}(p)$.

\smallskip

Next consider the case where $k=p-2$ and $0\leq l\leq 1$. Then, by Proposition~\ref{prop (2:1) pairs hooks}, $B$ 
forms a $(2:1)$-pair with the block $\bar{B}:=B_{p-3,l}$, which has quiver $Q_{p-3,l}(p)$, by induction. Again, by 
Corollary~\ref{cor l and r hooks} and Lemma~\ref{lemma alpha max},
$\bar{\alpha}$ is the largest $p$-regular partition of $n-1$ with $p$-core $\kappa_{\bar{B}}$ and $\partial$-value $2$. By \ref{noth partial Scopes},
we need to consider the following part of the quiver of $\bar{B}$, whose vertices lie in rows with $\partial$-values $0,1,2$ and $3$:

\begin{center}
\begin{tikzpicture}
\coordinate[label=left:$\bar{\beta}_+$] (B+) at (0,-0.5);
\coordinate[label=above:$\mu$] (M) at (0,0);
\coordinate[label=above:$\rho$] (R) at (1,0);
\coordinate[label=above right:$\bar{\beta}$] (B) at (1,-0.5);
\coordinate[label=below:$\bar{\gamma}$] (G) at (1,-1);
\coordinate[label=below left:$\bar{\alpha}$] (A) at (0.5,-1);
\draw[con] (M) node{\color{red} $\bullet$} -- (B) node{\color{red} $\bullet$};
\draw[con] (M)  -- (B+) node{\color{red} $\bullet$};
\draw[con] (B+)  -- (R) node{\color{red} $\bullet$};
\draw[con] (B+)  -- (A) node{\color{red} $\bullet$};
\draw[con] (A) -- (B);
\draw[con] (B) -- (G) node{\color{red} $\bullet$};
\draw[con] (R) -- (B);
\draw[con] (G) -- (1.5,-1.5) node{$\bullet$};
\draw[con] (B) -- (1.5,-1) node{$\bullet$};
\draw[con] (1.5,-1.5) -- (1.5,-1);
\end{tikzpicture}
\end{center}

By Corollary~\ref{cor l and r hooks}, $\bar{\beta}$ is $p$-regular and $p$-restricted, so that $\bar{\beta}_+$ exists. As well, $\bar{\gamma}$ is
$p$-regular and $p$-restricted, and $\bar{\gamma}_+=\bar{\alpha}$, by Theorem~\ref{thm Loewy except}.  So, by Theorem~\ref{thm Loewy except} and Lemma~\ref{lemma Ext alpha}, 
$\bar{\alpha}$ is only connected to $\bar{\beta}$ and $\bar{\beta}_+$, which identifies 
the positions of $\bar{\beta}$, $\bar{\beta}_+$ and $\bar{\gamma}$. By induction, we further have $\mu>\bar{\beta}$ and $\rho>\bar{\beta}$.
From Theorem~\ref{thm Loewy except} and \cite[Theorem 6.1]{ChuangTan2001}, we thus deduce that $\bar{Y}\cong D^\mu\oplus D^\rho$ and $\bar{Z}=\{0\}$.
Thus, by Theorem~\ref{thm Loewy except} and \ref{noth partial Scopes} we obtain the corresponding part of the quiver of $B$, where again the vertices drawn lie in
rows with $\partial$-values $0,1,2$ and $3$:

\begin{center}
\begin{tikzpicture}
\coordinate[label=left:$\bar{\alpha}_+$] (A+) at (0,-0.5);
\coordinate[label=above:$\hat{\mu}$] (M) at (0,0);
\coordinate[label=above:$\hat{\rho}$] (R) at (1,0);
\coordinate[label=above right:$\gamma$] (G) at (1,-0.5);
\coordinate[label=below:$\beta$] (B) at (1,-1);
\coordinate[label=below left:$\alpha$] (A) at (0.5,-0.5);
\draw[con] (M) node{\color{blue} $\bullet$} -- (G) node{\color{blue} $\bullet$};
\draw[con] (M)  -- (A+) node{\color{blue} $\bullet$};
\draw[con] (M)  -- (A) node{\color{blue} $\bullet$};
\draw[con] (A+)  -- (R) node{\color{blue} $\bullet$};
\draw[con] (A) -- (B)node{\color{blue} $\bullet$};
\draw[con] (B) -- (G) node{\color{blue} $\bullet$};
\draw[con] (R) -- (G);
\draw[con] (R) -- (A);
\draw[con] (B) -- (1.5,-1.5) node{$\bullet$};
\draw[con] (G) -- (1.5,-1) node{$\bullet$};
\draw[con] (1.5,-1.5) -- (1.5,-1);
\end{tikzpicture}
\end{center}

Here, by Theorem~\ref{thm Loewy except}, we have $Y\cong D^{\hat{\mu}}\oplus D^{\hat{\rho}}$. Since, by Corollary~\ref{cor l and r hooks}, $\beta$ is not $p$-restricted, 
 $\beta_+$ does not exist. Hence, by Lemma~\ref{lemma Ext alpha} and Theorem~\ref{thm Loewy except}, $\alpha$ is only connected to $\beta$, $\hat{\mu}$ and
 $\hat{\rho}$. The information on the lexicographic order is again obtained from Theorem~\ref{thm Loewy except} and \cite[Theorem 6.1]{ChuangTan2001}. This shows that
 $B$ has quiver $Q_{k,l}(p)$.
 
 \smallskip

 If $k=p-1$, then $l=0$. By Proposition~\ref{prop (2:1) pairs hooks}, $B$ forms a $(2:1)$-pair with
 the block $\bar{B}:=B_{p-2,0}$. The latter has quiver $Q_{p-2,0}(p)$, by induction.
 By Corollary~\ref{cor l and r hooks} and Lemma~\ref{lemma alpha max},
$\bar{\alpha}$ is the largest $p$-regular partition of $n-1$ with $p$-core $\kappa_{\bar{B}}$ and $\partial$-value $1$. By 
\ref{noth partial Scopes},
we consider the following part of the quiver of $\bar{B}$, whose vertices lie in rows with $\partial$-values $0,1,2$ and $3$:

\begin{center}
\begin{tikzpicture}
\coordinate[label=left:$\bar{\alpha}$] (A) at (0,-0.5);
\coordinate[label=above:$\mu$] (M) at (0,0);
\coordinate[label=above:$\bar{\beta}$] (B) at (1,0);
\coordinate[label=below left:$\bar{\gamma}$] (G) at (0.5,-0.5);
\draw[con] (M) node{\color{red} $\bullet$} -- (G) node{\color{red} $\bullet$};
\draw[con] (M)  -- (A) node{\color{red} $\bullet$};
\draw[con] (M)  -- (G);
\draw[con] (A)  -- (B) node{\color{red} $\bullet$};
\draw[con] (B) -- (G);
\draw[con] (B) -- (1,-0.5) node{$\bullet$};
\draw[con] (G) -- (1,-1) node{$\bullet$};
\draw[con] (1,-1) -- (1,-0.5);
\draw[con] (1,-0.5) -- (1.5,-1)node{$\bullet$};
\draw[con] (1,-1) -- (1.5,-1.5)node{$\bullet$};
\draw[con] (1.5,-1.5) -- (1.5,-1);
\draw[con] (M) -- (1,-0.5);
\end{tikzpicture}
\end{center}

By Corollary~\ref{cor l and r hooks}, $\bar{\gamma}$ is both $p$-regular and $p$-restricted, so that $\bar{\gamma}_+$ exists. 
By Theorem~\ref{thm Loewy except}, we get $\bar{\alpha}=\bar{\gamma}_+$. 
By Corollary~\ref{cor l and r hooks}, we further know that $\partial(\bar{\beta})=0$, and $\bar{\beta}$
is $p$-regular and not $p$-restricted, so that $\bar{\beta}_+$ does not exists. More precisely, we have $\bar{\beta}=(2(p-1),p,1)$.
Hence $\bar{\beta}$ is white, and is thus the largest $p$-regular white partition with $p$-core $\kappa_{\bar{B}}$. This identifies the 
positions of $\bar{\alpha}$, $\bar{\gamma}$ and $\bar{\beta}$. 
The remaining red vertex belongs to the $p$-regular (black) partition $\mu$
with $\bar{Z}\cong D^\mu$. Since all other vertices connected to $\bar{\beta}$ are smaller than $\bar{\beta}$, Theorem~\ref{thm Loewy except} also
implies $\bar{Y}=\{0\}$. 

By Corollary~\ref{cor l and r hooks}, $\beta$ is $p$-regular and not $p$-restricted, so that $\beta_+$ is not defined. 
Moreover, we have $\alpha=(2(p-1)+1,p)=\langle p,p\rangle$ with $\partial(\alpha)=0$, and $\alpha$ is white. 
By Lemma~\ref{lemma Ext alpha} and Theorem~\ref{thm Loewy except}, we deduce
that $\alpha$ is only connected to $\beta$.  As for the lexicographic ordering, we have $\mu=(3p-2)=\langle p-1\rangle$, since this is the lexicographically largest (white) partition in $\bar{B}$. 
Thus $\hat{\mu}=(3p-1)=\langle p\rangle$; in particular, $\hat{\mu}>\alpha$.  Note also that $\hat{\mu}$
satisfies $Z\cong D^{\hat{\mu}}$. This now gives the corresponding part of the quiver of $B$.

\begin{center}
\begin{tikzpicture}
\coordinate[label=above:$\alpha$] (A) at (0.5,0);
\coordinate[label=above:$\hat{\mu}$] (M) at (0,0);
\coordinate[label=above:$\gamma$] (G) at (1,0);
\coordinate[label=below left:$\beta$] (B) at (0.5,-0.5);
\draw[con] (M) node{\color{blue} $\bullet$} -- (B) node{\color{blue} $\bullet$};
\draw[con] (M)  -- (B);
\draw[con] (A)  node{\color{blue} $\bullet$} -- (B);
\draw[con] (B) -- (G) node{\color{blue} $\bullet$};
\draw[con] (G) -- (1,-0.5) node{$\bullet$};
\draw[con] (B) -- (1,-1) node{$\bullet$};
\draw[con] (1,-1) -- (1,-0.5);
\draw[con] (1,-0.5) -- (1.5,-1)node{$\bullet$};
\draw[con] (1,-1) -- (1.5,-1.5)node{$\bullet$};
\draw[con] (1.5,-1.5) -- (1.5,-1);
\draw[con] (M) -- (1,-0.5);
\end{tikzpicture}
\end{center}

The remaining information on the lexicographic order is again obtained from Theorem~\ref{thm Loewy except} and \cite[Theorem 6.1]{ChuangTan2001}. 
This shows that $B$ has quiver $Q_{k,l}(p)$.

\smallskip

To complete the case $k+l\leq p-1$, it remains to treat the blocks with $p$-cores $\kappa_{1,p-3}$ and $\kappa_{1,p-2}$. 
So let first $k=1$ and $l=p-3$. By Proposition~\ref{prop (2:1) pairs hooks}, $B$ forms a $(2:1)$-pair
with $\bar{B}:=B_{1,p-4}$, which has quiver $Q_{1,p-4}(p)$, by induction. By Lemma~\ref{lemma alpha max}
and Corollary~\ref{cor l and r hooks}, $\bar{\alpha}$ is the smallest $p$-regular partition of $n-1$ with $p$-core $(1^{p-3})$
and $\partial$-value 2. By \ref{noth partial Scopes}, we thus have to consider the following part of the quiver of $\bar{B}$, whose
vertices belong to the rows with $\partial$-values $0,1,2$ and $3$:

\begin{center}
\begin{tikzpicture}
\coordinate[label=left:$\bar{\beta}_+$] (B+) at (0,-0.5);
\coordinate[label=above:$\mu$] (M) at (0,0);
\coordinate[label=above:$\rho$] (R) at (1,0);
\coordinate[label=right:$\bar{\beta}$] (B) at (1,-0.5);
\coordinate[label=below right:$\bar{\alpha}_+$] (A+) at (0,-1);
\coordinate[label=right:$\bar{\alpha}$] (A) at (0.5,-1);
\draw[con] (M) node{\color{red} $\bullet$} -- (B) node{\color{red} $\bullet$};
\draw[con] (M)  -- (B+) node{\color{red} $\bullet$};
\draw[con] (B+)  -- (R) node{\color{red} $\bullet$};
\draw[con] (B+)  -- (A) node{\color{red} $\bullet$};
\draw[con] (A) -- (B);
\draw[con] (R) -- (B);
\draw[con] (B+)  -- (A+) node{\color{red} $\bullet$};
\draw[con] (-0.5,-1) node{$\bullet$} -- (B+);
\draw[con] (-0.5,-1) -- (-0.5,-1.5) node{$\bullet$};
\draw[con] (-0.5,-1.5) -- (A+);
\end{tikzpicture}
\end{center}

By Corollary~\ref{cor l and r hooks}, $\bar{\gamma}$ is $p$-singular, while $\bar{\alpha}$ and $\bar{\beta}$
are both $p$-regular and $p$-restricted. Thus $\bar{\alpha}_+$ and $\bar{\beta}_+$ exist. As well, $\partial(\bar{\beta})=1$. This identifies
the positions of $\bar{\alpha}$, $\bar{\beta}$, $\bar{\alpha}_+$ and $\bar{\beta}_+$. By Theorem~\ref{thm Loewy except} and \cite{ChuangTan2001}, we 
further deduce that $\bar{Y}\cong D^\mu\oplus D^\rho$ and $\bar{Z}=\{0\}$.

By Corollary~\ref{cor l and r hooks}, $\gamma$ is $p$-regular and $p$-restricted, and by Theorem~\ref{thm Loewy except}, we have $\gamma_+=\alpha$ with
$\partial$-value 1. So from Theorem~\ref{thm Loewy except} we deduce the following information on the corresponding part of the quiver of $B$:

\begin{center}
\begin{tikzpicture}
\coordinate[label=left:$\alpha_+$] (A+) at (0,-0.5);
\coordinate[label=above:$\hat{\mu}$] (M) at (0,0);
\coordinate[label=above:$\hat{\rho}$] (R) at (1,0);
\coordinate[label=right:$\gamma$] (G) at (1,-0.5);
\coordinate[label=below right:$\beta_+$] (B+) at (0,-1);
\coordinate[label=right:$\alpha$] (A) at (0.5,-0.5);
\draw[con] (M) node{\color{blue} $\bullet$} -- (G) node{\color{blue} $\bullet$};
\draw[con] (M)  -- (A+) node{\color{blue} $\bullet$};
\draw[con] (A+)  -- (R) node{\color{blue} $\bullet$};
\draw[con] (M)  -- (A) node{\color{blue} $\bullet$};
\draw[con] (R) -- (A);
\draw[con] (R) -- (G);
\draw[con] (A+)  -- (B+) node{\color{blue} $\bullet$};
\draw[con] (-0.5,-1) node{$\bullet$} -- (A+);
\draw[con] (-0.5,-1) -- (-0.5,-1.5) node{$\bullet$};
\draw[con] (-0.5,-1.5) -- (B+);
\draw[con] (B+) -- (A);
\end{tikzpicture}
\end{center}

Here $Y\cong D^{\hat{\mu}}\oplus D^{\hat{\rho}}$.  By Lemma~\ref{lemma Ext alpha}, we also conclude that $\hat{\rho}$,
$\hat{\mu}$ and $\beta_+$ are the only neighbours of $\alpha$. Thus $B$ has quiver $Q_{1,p-3}(p)$.
The information concerning the lexicographic ordering follows again from Theorem~\ref{thm Loewy except} and
\cite[Theorem~6.1]{ChuangTan2001}.

\smallskip

Now let $k=1$ and $l=p-2$. Then $B$ forms a $(2:1)$-pair with $\bar{B}:=B_{1,p-3}$, which has quiver
$Q_{1,p-3}(p)$, as we have just seen. By Corollary~\ref{cor l and r hooks},
we know that $\bar{\beta}$ and $\bar{\gamma}$ are $p$-singular, but $\bar{\beta}$ is $p$-restricted, so that $\bar{\beta}_+$ exists.
Moreover, $\bar{\alpha}$ is both $p$-regular and $p$-restricted, so that $\bar{\alpha}_+$ exists. As well, $\bar{\alpha}$
has $\partial$-value $1$ and is, by Lemma~\ref{lemma alpha max}, the smallest $p$-regular partition of $n-1$ with
$p$-core $(1^{p-2})$ and $\partial$-value $1$. We also get $\bar{\beta}=(2^p,1^{p-2})$, which has $\partial$-value $0$
and is black; in particular, also $\bar{\beta}_+$ is black. This gives the following relevant part of the quiver of
$\bar{B}$, which lies in the rows with $\partial$-values $0,1,2$ and $3$:

\begin{center}
\begin{tikzpicture}
\coordinate[label=above:$\bar{\beta}_+$] (B+) at (0,0);
\coordinate[label=above:$\rho$] (R) at (1,0);
\coordinate[label=right:$\bar{\alpha}$] (A) at (1,-0.5);
\coordinate[label=below right:$\bar{\alpha}_+$] (A+) at (0.5,-0.5);
\draw[con] (B+) node{\color{red} $\bullet$} -- (A) node{\color{red} $\bullet$};
\draw[con] (B+)  -- (0,-0.5) node{$\bullet$};
\draw[con] (0,-0.5)  -- (R) node{\color{red} $\bullet$};
\draw[con] (B+)  -- (A+) node{\color{red} $\bullet$};
\draw[con] (R) -- (A+);
\draw[con] (R) -- (A);
\draw[con] (0,-0.5)  -- (0,-1) node{$\bullet$};
\draw[con] (0,-1) -- (A+);
\draw[con] (-0.5,-1) node{$\bullet$} -- (-0.5,-1.5)node{$\bullet$};
\draw[con] (-0.5,-1.5) -- (0,-1);
\draw[con] (-0.5,-1) -- (0,-0.5);
\end{tikzpicture}
\end{center}

By Theorem~\ref{thm Loewy except}, we have $D^\rho\cong \bar{Z}$.  Note that $\alpha=(3,2^{p-1},1^{p-2})=\langle 2,2\rangle$,
which has $\partial$-value $0$ and is black. Displaying all partitions of $B$ on a $[2,3^{p-1}]$-abacus, we deduce that the $p$-regular
partitions of $B$ with $\partial$-value 0 are precisely $\langle p\rangle,\langle m,m-1\rangle$, for $m\in\{3,\ldots,p\}$, and $\langle 2,2\rangle=\alpha$. 
Hence $\alpha$ is the lexicographically smallest $p$-regular partition of $B$ with $\partial$-value 0.
With \ref{noth partial Scopes}, Lemma~\ref{lemma Ext alpha}
 and Theorem~\ref{thm Loewy except}, we get the following information on the corresponding part of the quiver of $B$:

\begin{center}
\begin{tikzpicture}
\coordinate[label=above:$\alpha_+$] (A+) at (0,0);
\coordinate[label=above:$\hat{\rho}$] (R) at (1,0);
\coordinate[label=right:$\alpha$] (A) at (1.5,0);
\coordinate[label=below right:$\beta_+$] (B+) at (0.5,-0.5);
\draw[con] (A+) node{\color{blue}$\bullet$} -- (0,-0.5) node{$\bullet$};
\draw[con] (0,-0.5)  -- (R) node{\color{blue} $\bullet$};
\draw[con] (A+)  -- (B+) node{\color{blue} $\bullet$};
\draw[con] (R) -- (B+);
\draw[con] (B+) -- (A) node{\color{blue}$\bullet$};
\draw[con] (0,-0.5)  -- (0,-1) node{$\bullet$};
\draw[con] (0,-1) -- (B+);
\draw[con] (-0.5,-1) node{$\bullet$} -- (-0.5,-1.5)node{$\bullet$};
\draw[con] (-0.5,-1.5) -- (0,-1);
\draw[con] (-0.5,-1) -- (0,-0.5);
\end{tikzpicture}
\end{center}

Here $D^{\hat{\rho}}\cong Z$.
It remains to verify that $\alpha$ is only connected to $\beta_+$. This is not immediate at this stage, since we do 
not know $\bar{Y}$ and $Y$. However, from \cite{BO1998}, we deduce that the Mullineux conjugate partition of $\alpha$
is $(2(p-1)+1,p)$. We have seen above that this partition has, in the quiver of its block $B_{p-1,0}$, only
one neighbour. Therefore, $\alpha$ also has only one neighbour, which must be $\beta_+$.  
This completes the proof in the
case $k=1$ and $l=p-2$, showing that $B$ then has quiver $Q_{1,p-2}(p)$.

 \medskip

To summarize, we have now proven Theorem~\ref{thm main1} in the case where $k+l\leq p-1$. Next suppose that $k+l=p+1$. Then $B_{k,l}$ is Scopes equivalent to 
$B_{k-1,l-1}$, by Proposition~\ref{prop (2:1) pairs hooks}. By induction $B_{k-1,l-1}$ has quiver $Q_{k-1,l-1}(p)$. Since the Scopes
equivalence preserves the lexicographic ordering as well as $\partial$-values and colours of the corresponding partitions, we deduce
that also $B_{k,l}$ has quiver $Q_{k-1,l-1}(p)$, as claimed. 

Lastly, we need to treat the case where $p+2\leq k+l\leq 2p-1$. To show that $B_{k,l}$ then has quiver $Q_{k-1,l-1}(p)$, we again argue by induction on $k+l$. Since the arguments used in the case $2\leq k+l\leq p-1$ above
translate almost literally, we leave the details to the reader.
\end{proof}

As a consequence of Theorem~\ref{thm main1 details} and Proposition~\ref{prop graph isos}, we now also get the
following result, which immediately implies Theorem~\ref{thm main2}.

\begin{prop}\label{prop main2}
Let $p\geq 5$, and let $k,k',l,l'\in \NN_0$ be such that $(k,1^l)$ and $(k',1^{l'})$ are $p$-cores. Then the blocks
$B_{k,l}$ and $B_{k',l'}$ are Morita equivalent if and only if one of the following cases occurs:

\begin{itemize}
\item[{\rm (i)}] $(k,1^l)=(k',1^{l'})$;
\item[{\rm (ii)}] $(k,1^l)'=(k',1^{l'})$;
\item[{\rm (iii)}] $k+l=p-1$, $k'+l'=p+1$, $(k,1^l)=(k'-1,1^{l'-1})$;
\item[{\rm (iv)}] $k+l=p-1$, $k'+l'=p+1$, $(k,1^l)'=(k'-1,1^{l'-1})$.
\end{itemize}
\end{prop}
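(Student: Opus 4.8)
The plan is to argue entirely graph-theoretically, using three ingredients: Morita equivalent algebras have isomorphic Ext-quivers (here even as undirected graphs, since the relevant simple modules are self-dual); Theorem~\ref{thm main1 details} identifies the Ext-quiver of each $B_{k,l}$ with one of the explicit graphs $Q_{a,b}(p)$; and Proposition~\ref{prop graph isos} classifies, for fixed $p$, exactly when two of these graphs are isomorphic.

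For the ``if'' direction, case~(i) is trivial. For case~(ii), I would recall from \ref{noth strategy 1} that $(l+1,1^{k-1})=(k,1^l)'$ whenever $k\geq 1$ (and $(1^l)'=(l)$ when $k=0$), so that tensoring with $\sgn$ gives an algebra isomorphism $B_{k,l}\cong B_{l+1,k-1}$; hence $(k,1^l)'=(k',1^{l'})$ forces $B_{k,l}$ and $B_{k',l'}$ to be Morita equivalent. For case~(iii), the hypotheses give $k=k'-1$ and $l=l'-1$, so Proposition~\ref{prop (2:1) pairs hooks}(c) shows that $B_{k',l'}$ forms a $(2:2)$-pair with $B_{k'-1,l'-1}=B_{k,l}$; since $2\geq w_{B_{k,l}}=2$, these blocks are Scopes equivalent, hence Morita equivalent, by \cite{Scopes1991}. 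Case~(iv) is obtained by composing (ii) and (iii): from $(k,1^l)'=(k'-1,1^{l'-1})$ one gets $B_{k,l}\cong B_{l+1,k-1}$, and since $(l+1)+(k-1)=k+l=p-1$, $(k'-1)+(l'-1)=k'+l'-2=p-1$, and $(l+1,1^{k-1})=(k'-1,1^{l'-1})$, part~(iii) applies to the pair $B_{l+1,k-1}$, $B_{k',l'}$.

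For the ``only if'' direction, suppose $B_{k,l}$ and $B_{k',l'}$ are Morita equivalent, so that their Ext-quivers are isomorphic as undirected graphs. By Theorem~\ref{thm main1 details} the Ext-quiver of $B_{k,l}$ is isomorphic to $Q_{a,b}(p)$, where $(a,b):=(k,l)$ if $0\leq k+l\leq p-1$ and $(a,b):=(k-1,l-1)$ if $p+1\leq k+l\leq 2p-1$; in each case $a,b\in\NN_0$ with $a+b\leq p-1$, and $(k,1^l)$ is recovered from $(a,b)$ together with the range in which $k+l$ lies. Defining $(a',b')$ from $(k',l')$ in the same way, one obtains $Q_{a,b}(p)\cong Q_{a',b'}(p)$, and Proposition~\ref{prop graph isos} then forces $(a,b)$ and $(a',b')$ to be related in one of its few admissible ways. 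Tracing each possibility back through the recovery recipe for the cores yields precisely the alternatives (i)--(iv): two equal reduced pairs arising in the same range give~(i); two equal reduced pairs arising in different ranges give~(iii) (the $(2:2)$-collapse $B_{k,l}\leftrightarrow B_{k-1,l-1}$ with $\{k+l,k'+l'\}=\{p-1,p+1\}$); and the graph isomorphism coming from conjugation gives~(ii) within one range and~(iv) across the two ranges.

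The main obstacle is this final bookkeeping step: one has to match the short list of isomorphisms supplied by Proposition~\ref{prop graph isos} with the four combinatorial conditions, keeping careful track of how the two ranges $0\leq k+l\leq p-1$ and $p+1\leq k+l\leq 2p-1$ are glued along a common reduced pair, and checking the boundary cases $k=0$, $l=0$ and $a+b=p-1$. Everything else is routine once Proposition~\ref{prop graph isos} is available.
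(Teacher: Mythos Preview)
Your proposal is correct and follows essentially the same approach as the paper's own proof: the ``if'' direction via sign-conjugation and Scopes equivalence, and the ``only if'' direction via Theorem~\ref{thm main1 details} together with Proposition~\ref{prop graph isos}. The only organizational difference is that the paper splits the ``only if'' direction explicitly into the three cases (both cores small, both large, mixed) and in the mixed case spells out why $(k,l)=(k'-1,l'-1)$ or $(k,l)=(l',k'-2)$ forces $k+l=p-1$ and $k'+l'=p+1$ (namely, $k+l=k'+l'-2$ combined with $k+l\leq p-1\leq k'+l'-2$); this is precisely the ``bookkeeping'' step you flag as the main remaining work, so nothing is missing from your outline.
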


\begin{proof}
If $(k,1^l)'=(k',1^{l'})$, then $B_{k',l'}$ is isomorphic, hence Morita equivalent, to 
$B_{k,l}$, an isomorphism being given by tensoring with the sign representation. If $k+l=p-1$, $k'+l'=p+1$ and $(k,1^l)=(k'-1,1^{l'-1})$, then
$B_{k,l}$ and $B_{k',l'}$ are Scopes, hence Morita, equivalent, by Proposition~\ref{prop (2:1) pairs hooks}(c).

So, conversely, suppose that $B_{k,l}$ is Morita equivalent to $B_{k',l'}$. Then the Ext-quivers of $B_{k,l}$ and $B_{k',l'}$
are isomorphic as undirected graphs. We distinguish three cases. If $k+l\leq p-1$ and $k'+l'\leq p-1$, then $B_{k,l}$ has Ext-quiver
$Q_{k,l}(p)$ and $B_{k',l'}$ has Ext-quiver $Q_{k',l'}(p)$, by Theorem~\ref{thm main1 details}. 
By Proposition~\ref{prop graph isos}, we must have $(k,l)=(k',l')$ or $(k',l')=(l+1,k-1)$. In the latter case, $(k',1^{l'})=(k,l)'$.

If $p+1\leq k+l\leq 2p-1$ and $p+1\leq k'+l'\leq 2p-1$, then
$B_{k,l}$ has Ext-quiver
$Q_{k-1,l-1}(p)$ and $B_{k',l'}$ has Ext-quiver $Q_{k-1',l-1'}(p)$, by Theorem~\ref{thm main1 details}. Thus Proposition~\ref{prop graph isos} again
implies $(k,l)=(k',l')$, or $(k',1^{l'})=(k,l)'$.

Lastly, suppose that $0\leq k+l\leq p-1$ and $p+1\leq k'+l'\leq 2p-1$. Then $B_{k,l}$ has Ext-quiver
$Q_{k,l}(p)$ and $B_{k',l'}$ has Ext-quiver $Q_{k-1',l-1'}(p)$, by Theorem~\ref{thm main1 details}. So this time
Proposition~\ref{prop graph isos} implies $(k,l)=(k'-1,l'-1)$ or $(k,l)=(l',k'-2)$; in particular, $k+l=p-1$ and $k'-l'=p+1$.
If $(k,l)=(l',k'-2)$, then $(k,1^l)'=(k'-1,1^{l'-1})'$.

This completes the proof of the proposition.
\end{proof}

\begin{noth}{\bf The case $p=3$.}\,\label{noth p=3}
To conclude this section, we now also consider the case
$p=3$. By \cite{Scopes1991}, there are five Scopes classes of blocks
of weight $2$, representatives of these being given by the blocks with $3$-cores $\emptyset, (1),(2),(1^2)$, and $(3,1^2)$. Note that all of these cores are hook partitions. The decomposition numbers of these blocks are well known; see \cite{JK1981}. As well, the Loewy structures of the respective Specht modules can easily be determined.
Recall further that Theorem~\ref{thm Loewy except} also holds for $p=3$. Thus, examining the principal blocks of $F\mathfrak{S}_6$ and $F\mathfrak{S}_7$ and then arguing 
inductively
as in the proof of Theorem~\ref{thm main1 details}, we see that the Ext-quivers of these five blocks are as drawn below. Here, as before, we order
partitions with respect to their $\partial$-values, and the partitions with the same $\partial$-values with respect to the lexicographic ordering, from left to right. Again,
an arrow $\lambda\to \mu$ in the quivers indicates that $\lambda>\mu$ and $\Ext^1(D^\lambda,D^\mu)\neq \{0\}$.

\medskip

\begin{center}
\begin{tabular}{|c|c|c|c|c|}\hline
$\emptyset$& $(1)$& $(2)$ &$(1^2)$& $(3,1^2)$\\\hline\hline
\begin{tikzpicture}
\coordinate (B+) at (0,-0.5);
\coordinate[label=above:b] (M) at (0,0);
\coordinate[label=above:w] (R) at (1,0);
\coordinate (B) at (1,-0.5);
\coordinate (A) at (0.5,-1);
\draw[con] (M) node{$\bullet$} -- (B) node{$\bullet$};
\draw[con] (M)  -- (B+) node{$\bullet$};
\draw[con] (B+)  -- (R) node{$\bullet$};
\draw[con] (B+)  -- (A) node{$\bullet$};
\draw[con] (A) -- (B);
\draw[con] (R) -- (B);
\end{tikzpicture}&
\begin{tikzpicture}
\coordinate (A+) at (0,-0.5);
\coordinate[label=above:b] (M) at (0,0);
\coordinate[label=above:w] (R) at (1,0);
\coordinate (G) at (1,-0.5);
\coordinate(A) at (0.5,-0.5);
\draw[con] (M) node{$\bullet$} -- (G) node{$\bullet$};
\draw[con] (M)  -- (A+) node{$\bullet$};
\draw[con] (M)  -- (A) node{$\bullet$};
\draw[con] (A+)  -- (R) node{$\bullet$};
\draw[con] (B) -- (G) node{$\bullet$};
\draw[con] (R) -- (G);
\draw[con] (R) -- (A);
\end{tikzpicture}
&
\begin{tikzpicture}
\coordinate[label=above:w] (A) at (0.5,0);
\coordinate[label=above:b] (M) at (0,0);
\coordinate[label=above:w] (G) at (1,0);
\coordinate (B) at (0.5,-0.5);
\draw[con] (M) node{$\bullet$} -- (B) node{ $\bullet$};
\draw[con] (M)  -- (B);
\draw[con] (A)  node{$\bullet$} -- (B);
\draw[con] (B) -- (G) node{$\bullet$};
\draw[con] (G) -- (1,-0.5) node{$\bullet$};
\draw[con] (M) -- (1,-0.5);
\end{tikzpicture}
&
\begin{tikzpicture}
\coordinate[label=above:b] (A+) at (0,0);
\coordinate[label=above:w] (R) at (1,0);
\coordinate[label=right:b] (A) at (1.5,0);
\coordinate (B+) at (0.5,-0.5);
\draw[con] (A+) node{$\bullet$} -- (0,-0.5) node{$\bullet$};
\draw[con] (0,-0.5)  -- (R) node{$\bullet$};
\draw[con] (A+)  -- (B+) node{$\bullet$};
\draw[con] (R) -- (B+);
\draw[con] (B+) -- (A) node{$\bullet$};
\end{tikzpicture}&
\begin{tikzpicture}
\coordinate[label=above:b] (A) at (0,0);
\coordinate[label=above:w] (B) at (0.5,0);
\coordinate[label=above:w] (C) at (1,0);
\coordinate[label=above:b] (D) at (1.5,0);
\coordinate (E) at (0.75,-0.5);
\draw[con] (A) node{$\bullet$} -- (E) node{ $\bullet$};
\draw[con] (B) node{$\bullet$} -- (E);
\draw[con] (E) node{$\bullet$} -- (C)node{$\bullet$};
\draw[con] (E) node{$\bullet$} -- (D)node{$\bullet$};
\end{tikzpicture}\\\hline
\end{tabular}
\end{center}

\medskip

The blocks with $3$-cores $(2)$ and $(1^2)$, respectively, are isomorphic via tensoring with the sign representation, and their quivers
are isomorphic as undirected graphs. The above table also shows that the quivers of the blocks labelled by $\emptyset, (1), (2)$ and $(3,1^2)$ are
pairwise non-isomorphic as undirected graphs. In particular, these partitions parametrize the four Morita equivalence classes of $3$-blocks  of symmetric groups of weight 2.
\end{noth}

\begin{appendix}

\section{Abacus Combinatorics}\label{sec abacus}

The purpose of this short section is to collect some useful 
abacus combinatorics that we use repeatedly in this article. Most of this is 
well known to the experts and can easily be verified. We, therefore, omit most of the details, but 
present some illustrating examples. Throughout this section, let $p$ be a prime. Our notation will be chosen in 
accordance with Section~\ref{sec pre}.

\begin{noth}{\bf Hook lengths and abacus displays.}\,\label{noth hooks diagram}
Suppose that $\lambda$ is a partition of $n$ with $p$-core $\kappa=(\kappa_1,\ldots,\kappa_t)$ and $p$-weight $w$. We display $\lambda$ on an
abacus with $p$ runners, labelled from $1$ to $p$. We denote this abacus by $\Gamma_\lambda$.
By \cite[2.7.13]{JK1981}, there is a bijection between the entries in the hook diagram of $\lambda$ divisible by $p$ and the set of pairs $((r,i),(s,i))$ such that
there is a bead on runner $i$ in row $r$ and a gap on runner $i$ in row $s<r$ of $\Gamma_\lambda$. The entry in the hook diagram
of $\lambda$ then equals $p(r-s)$. Moreover, one can also read off  that the leg length $l$ of the hook in question
equals  the number of beads passed when moving the bead from position
$(r,i)$ to position $(s,i)$. 
\end{noth}

\begin{expl}\label{expl hook lengths}
Suppose that $p=3$ and $\lambda=(6,3^3,2^2)$. Then $\lambda$ has $p$-core $\kappa=(3,1)$ and $p$-weight $5$. We consider 
the abacus display $\Gamma_\lambda$ with six beads as well as the hook diagram $H_\lambda$ of $\lambda$:

\begin{center}
$\Gamma_\lambda$: \quad \begin{tabular}{ccc}
$-$&$-$&$\bullet$\\
$\bullet$&$-$&$\bullet$\\
$\bullet$&$\bullet$&$-$\\
$-$&$-$&$\bullet$
\end{tabular}
\quad\quad 
$H_\lambda$:\quad \begin{ytableau}
11&10&7&{\bf 3}&2&1\cr
7&{\bf 6}&{\bf 3}\\
{\bf 6}&5&2\\
5&4&1\\
{\bf 3}&2\\
2&1
\end{ytableau}
\end{center}

\medskip

So there are five entries in $H_\lambda$ that are divisible by $3$. We record, on the one hand, their positions in $H_\lambda$, the lengths (hl) as well as
the leg lengths (ll) of the corresponding hooks and, on the other hand, the respective pairs of positions in $\Gamma_\lambda$
under the above-mentioned bijection:

\medskip

\begin{center}
\begin{tabular}{|c|c|c|c|}\hline
position in $H_\lambda$& hl & ll& pair of positions in $\Gamma_\lambda$\\\hline\hline
$(1,4)$& $3$& $0$& $((4,3),(3,3))$\\\hline
$(2,2)$& $6$&$4$&$((3,2),(1,2))$\\\hline
$(2,3)$&$3$&$2$&$((3,2),(2,2))$\\\hline
$(3,1)$&$6$&$4$&$((3,1),(1,1))$\\\hline
$(5,1)$&$3$&$1$&$((2,1),(1,1))$\\\hline
\end{tabular}
\end{center}
\end{expl}

\medskip

\begin{noth}{\bf Colours and $\partial$-values of weight-$2$ partitions.}\,\label{noth colour weight 2} 
Suppose now that $p\geq 3$ and that $\lambda=(\lambda_1,\ldots,\lambda_s)$ is a partition of $n$ 
with $p$-core $\kappa=(\kappa_1,\ldots,\kappa_t)$ and $p$-weight $2$. Recall from \ref{noth partial} the definition
of $\partial(\lambda)$.
We consider an $[m_1,\ldots,m_p]$-abacus display $\Gamma_\lambda$ of $\lambda$ with at least $2p+t$ beads. In the notation of \ref{noth abacus},
there are three possibilities: $\lambda=\langle i\rangle$, $\lambda=\langle i,i\rangle$, or $\lambda=\langle i,j\rangle$, for
some $1\leq i<j\leq p$.   One of our next aims is to show how to determine the colour of $\lambda$ in the case $\partial(\lambda)=0$, using the
abacus display $\Gamma_\lambda$.
This information was needed, for instance, in the proof of Lemma~\ref{lemma l and r}.

Suppose that $\lambda=\langle i\rangle$. Then \ref{noth hooks diagram} shows that $H_\lambda$ has an entry equal to $2p$, and the leg length
of the corresponding hook equals the number of beads passed when moving the (unique) movable bead on runner $i$ two positions up. Moreover, $H_\lambda$
has an entry equal to $p$. The  leg length
of the corresponding hook equals the number of beads passed when moving the movable bead on runner $i$ one position up.

If $\lambda=\langle i,i\rangle$, then, by \ref{noth hooks diagram}, $H_\lambda$ also has an entry equal to $2p$. The leg length of
the corresponding hook equals the number of beads passed when putting the lower of the two movable beads on runner $i$ two positions up.
As well, $H_\lambda$ has an entry equal to $p$. The leg length of the corresponding hook equals the number
of beads passed when moving the upper of the two movable beads on runner $i$ one position up.

Lastly, suppose that $\lambda=\langle i,j\rangle$, for
some $i<j$. By \ref{noth hooks diagram}, $H_\lambda$ then has two entries equal to $p$.
The hook lengths of the corresponding hooks equal the number of beads passed when moving the movable bead on runner $i$ (respectively,
the movable bead on runner $j$) one position up.

From now on, suppose, in addition, that $\partial(\lambda)=0$. Suppose, moreover, that the movable beads lie in positions $(x,i)$ and $(y,j)$ of $\Gamma_\lambda$. If $x<y$, then
we consider  rows $x-1,\ldots,y$ of $\Gamma_\lambda$:

\begin{center}
\begin{tabular}{ccccc}
$\cdots$&$-$&{\cellcolor{lightgray} $m_1$}&$\bullet$&{\cellcolor{lightgray} $r_1$}\\
{\cellcolor{lightgray} $l_1$}&$\bullet$&$\cdots$&$\bullet$&$\cdots$\\
$\vdots$&$\vdots$&$\vdots$&$\vdots$&$\vdots$\\
$\cdots$&$-$&$\cdots$&$\bullet$&$\cdots$\\
$\cdots$&$-$&$\cdots$&$-$&{\cellcolor{gray} $r_2$}\\
{\cellcolor{gray} $l_2$}&$-$&{\cellcolor{gray} $m_2$}&$\bullet$&$\cdots$\\
\end{tabular}
\end{center}

Here as well as in all subsequent abacus displays, $m,m_1,m_2,l_1,l_2,r_1,r_2,l,r$ are the numbers of beads in the respective parts of $\Gamma_\lambda$, as shown in the diagrams.
Since $\lambda$ has weight $2$, we must have $m_1\geq m_2$, $l_1\geq l_2$ and $r_1\geq r_2$.
But then, by \ref{noth hooks diagram} we get that $\partial(\lambda)=|(l_1+r_1+m_1+1)-(l_2+m_2+r_2)|>0$, a contradiction.

If $x>y+1$, then rows $y-1,\ldots,x$ of $\Gamma_\lambda$ have shape

\begin{center}
\begin{tabular}{ccccc}
$\cdots$&$\bullet$&$\cdots$&$-$&{\cellcolor{lightgray} $r_1$}\\
{\cellcolor{lightgray} $l_1$}&$\bullet$&{\cellcolor{lightgray} $m_1$}&$\bullet$&$\cdots$\\
$\cdots$&$\bullet$&$\cdots$&$-$&$\cdots$\\
$\vdots$&$\vdots$&$\vdots$&$\vdots$&$\vdots$\\
$\cdots$&$\bullet$&$\cdots$&$-$&$\cdots$\\
$\cdots$&$-$&{\cellcolor{gray} $m_2$}&$-$&{\cellcolor{gray} $r_2$}\\
{\cellcolor{gray} $l_2$}&$\bullet$&$\cdots$&$-$&$\cdots$\\
\end{tabular}
\end{center}

Thus, we get $\partial(\lambda)=|(m_1+l_1+r_1+1)-(l_2+r_2+m_2)|>0$, again a contradiction.

If $x=y+1$, then rows $x-2,x-1,x$ of $\Gamma_\lambda$ have shape

\begin{center}
\begin{tabular}{ccccc}
$\cdots$&$\bullet$&$\cdots$&$-$&{\cellcolor{lightgray} $r_1$}\\
{\cellcolor{lightgray} $l_1$}&$-$&{\cellcolor{gray} $m$}&$\bullet$&{\cellcolor{gray} $r_2$}\\
{\cellcolor{gray} $l_2$}&$\bullet$&$\cdots$&$-$&$\cdots$\\
\end{tabular}
\end{center}

We get $\partial(\lambda)=|(l_1+r_1+m)-(l_2+m_2+m+1)|$, thus $l_1+r_1=l_2+m_2+1$.

Lastly, if the movable bead on runner $i$ lies in the same row $x$ as the movable bead on runner $j$, then we consider 
rows $x$ and $x-1$ of $\Gamma_\lambda$:

\begin{center}
\begin{tabular}{ccccc}
$\cdots$& $-$ &{\cellcolor{lightgray} $m_1$}&$-$&{\cellcolor{lightgray} $r$}\\
{\cellcolor{gray} $l$}& $\bullet$ &{\cellcolor{gray} $m_2$}&$\bullet$&$\cdots$\\
\end{tabular}

\end{center}

Note that still $m_1\geq m_2$, since there is no movable bead on any runner different from $i$ and $j$.
So there are two ways to obtain $[\kappa]$ from $[\lambda]$, depending on which bead is moved first. If 
we move the bead on runner $i$ first, then from \ref{noth hooks diagram} we get
$0=\partial(\lambda)=|(l+r+m_1)-(l+r+m_2)|$, thus $m_1=m_2$.
Analogously, also in the case that we first move the bead on runner $j$, we get
$0=\partial(\lambda)=|(l+r+m_2+1)-(l+r+m_1+1)|$, thus $m_1=m_2$.
\end{noth}

\begin{expl}\label{expl colour weight 2}
To illustrate the combinatorics in \ref{noth colour weight 2}, we consider $p=3$ and the principal block of $F\mathfrak{S}_7$ with $3$-core 
$(1)$. Moreover, we consider a $[2,3,2]$-abacus, for every partition of this block.

\smallskip

(a)\, For $\lambda=(5,2)=\langle 3\rangle$, we have

\begin{center}
$\Gamma_\lambda$: \quad \begin{tabular}{ccc}
$\bullet$&$\bullet$&$\bullet$\\
$\bullet$&$\bullet$&$-$\\
$-$&$\bullet$&$-$\\
$-$&$-$&{\color{red} $\bullet$}
\end{tabular}
\quad\quad 
$H_\lambda$:\quad \begin{ytableau}
{\color{red} 6}&5&{\bf 3} &2&1\cr
2&1
\end{ytableau}
\end{center}

The leg length of the hook of length $6$ is $1$, the leg length of the hook of length $3$ is $0$. Moving the red bead on runner 3 one position up, we do not
pass any bead, moving this bead two positions up we pass one bead.

\smallskip

(b)\, For $\lambda=(2^2,1^3)=\langle 3,3\rangle$, we have

\begin{center}
$\Gamma_\lambda$: \quad \begin{tabular}{ccc}
$\bullet$&$\bullet$&$-$\\
$\bullet$&$\bullet$&{\color{red} $\bullet$}\\
$-$&$\bullet$&{\color{blue} $\bullet$}\\
\end{tabular}
\quad\quad 
$H_\lambda$:\quad \begin{ytableau}
{\color{blue} 6}&2\cr
5&1\cr
{\color{red} 3}\cr
2\cr
1
\end{ytableau}
\end{center}

The hook of length 6 has leg length 4, the hook of length 3 has leg length 2. Moving the blue bead two positions up we pass four beads, moving
the red bead one position up we pass two beads.

\smallskip

(c)\, For $\lambda=(4,3)=\langle 3,2\rangle$, we have

\begin{center}
$\Gamma_\lambda$: \quad \begin{tabular}{ccc}
$\bullet$&$\bullet$&$\bullet$\\
$\bullet$&$\bullet$&$-$\\
$-$&$-$&{\color{blue} $\bullet$}\\
$-$&{\color{red} $\bullet$}&$-$
\end{tabular}
\quad\quad 
$H_\lambda$:\quad \begin{ytableau}
5&4&{\color{red} 3}&1\cr
{\color{blue} 3}&2&1
\end{ytableau}
\end{center}

The hook of length 3 corresponding to the red entry in $H_\lambda$ has leg length 1, the hook of length 3 corresponding to the blue entry in $H_\lambda$ has leg length 0. 
Moroever, $\partial(\lambda)=0$ and, since the larger leg length of the hooks just mentioned is odd, $\lambda$ is white.
Moving the red bead one position up we pass one bead, moving the blue bead one position up we do not pass any bead.

\smallskip

(d)\, For $\lambda=(2^3,1)=\langle 3,1\rangle$, we have

\begin{center}
$\Gamma_\lambda$: \quad \begin{tabular}{ccc}
$\bullet$&$\bullet$&$\bullet$\\
$-$&$\bullet$&$-$\\
{\color{red} $\bullet$}&$\bullet$&{\color{blue} $\bullet$}\\
\end{tabular}
\quad\quad 
$H_\lambda$:\quad \begin{ytableau}
5&{\color{blue} 3}\cr
4&2\cr
{\color{red} 3}&1\cr
1
\end{ytableau}
\end{center}

The hook of length 3 corresponding to the red entry in $H_\lambda$ has leg length 1, the hook of length 3 corresponding to the blue entry in $H_\lambda$ has leg length 2. 
Moroever, $\partial(\lambda)=0$ and, since the larger leg length of the hooks just mentioned is even, $\lambda$ is black.
Moving the red bead one position up we pass one bead, moving the blue bead one position up we pass two beads.
\end{expl}


\section{The Principal $p$-Blocks of $\mathfrak{S}_{2p}$ and $\mathfrak{S}_{2p+1}$}\label{sec B0}

Let  $p\geq 5$ be a prime. 
In the following we shall show that the principal block $B_{0,0}:=B_\emptyset$ of $F\mathfrak{S}_{2p}$ has
Ext-quiver $Q_{0,0,}(p)$, and the principal block $B_{1,0}:=B_{(1)}$ of $F\mathfrak{S}_{2p+1}$ has Ext-quiver $Q_{1,0}(p)$. We should also like to emphasize that the structure of the Ext-quivers of $B_{0,0}$ and $B_{1,0}$ can be deduced
from \cite{Scopes1995} and \cite{ChuangTan2001}. As well, these quivers appear in work of Martin \cite{Martin1989,Martin1990}. 

We shall give an elementary, self-contained proof here, the most important 
information being given by the decomposition matrix of $B_{0,0}$. The latter has been used in several publications, such as \cite{Martin1989,Scopes1995}, that
refer to the book \cite{Robinson}, which, however, does not provide too many details. We shall, thus, give a brief account in the form we shall
use it. Our strategy will then be to induce indecomposable projective $F\mathfrak{S}_{2p-1}$-modules to $F\mathfrak{S}_{2p}$.

\begin{noth}\label{noth abacus and order}
{\bf Abacus labelling and order on partitions.}\, Consider the principal block $B_{0,0}$ of $F\mathfrak{S}_{2p}$. We display the $p$-core $\emptyset$ of $B_{0,0}$ as well as all
partitions of $B_{0,0}$ on a $[2^p]$-abacus. As in \ref{noth abacus}(b) we identify partitions with their
respective abacus displays. 

\smallskip

(a)\, With the above notation, the lexicographic  ordering on partitions translates as follows:

\smallskip

\quad (i)\, For all $p\geq j>i\geq 1$ and $p\geq a\geq b\geq 1$, we have $\langle j\rangle >\langle i\rangle >\langle a,b\rangle$.

\quad (ii)\, For all $p\geq j>  i\geq 1$ and $p\geq j' >  i'\geq 1$, we have 
$$\langle j,i\rangle>\langle j',i'\rangle \Leftrightarrow j>j'\,, \text{ or } j=j' \text{ and } i>i'\,.$$

This leaves to determine the positions of the 
partitions  $\langle i, i\rangle$ with respect to the lexicographic ordering.
 
\quad (iii)\, For all $p\geq i\geq 3$, we have $\langle i,1\rangle>\langle i,i\rangle> \langle i-1, i-2\rangle$
 and $\langle 2, 1\rangle >   \langle  2, 2 \rangle > \langle 1, 1\rangle$.

\medskip

(b)\, The total number of partitions of $B_{0,0}$ equals $2p+\binom{p}{2}$, of which $\binom{p+1}{2}-1$ are $p$-regular.
Moreover, the following $p+1$ partitions are $p$-singular:
\begin{equation}\label{eqn p-sing}
\langle 2,1\rangle \text{ and } \langle j,j\rangle\,, \text{ for } 1\leq j\leq p\,.
\end{equation}
All partitions in (\ref{eqn p-sing}) are $p$-restricted.
\end{noth}

We now start to induce indecomposable projective $F\mathfrak{S}_{2p-1}$-modules to $B_{0,0}$.

\begin{noth}\label{noth induce from w=0}
{\bf Inducing from a block of $F\mathfrak{S}_{2p-1}$ of weight 0.}\, 
Consider  the block $B_{(p,1^{p-1})}$ of $F\mathfrak{S}_{2p-1}$ of weight $0$.
It has a unique simple module (up to isomorphism), namely
$D^{(p,1^{p-1})}\cong S^{(p,1^{p-1})}\cong P^{(p,1^{p-1})}$. The induced module $\ind_{\mathfrak{S}_{2p-1}}^{\mathfrak{S}_{2p}}(P^{(p,1^{p-1})})$
is of course also projective and, by the Branching Rules, it admits a Specht filtration whose Specht quotients are labelled
by those partitions of $2p$ that are obtained by adding a node to $(p,1^{p-1})$. Thus
$\ind_{\mathfrak{S}_{2p-1}}^{\mathfrak{S}_{2p}}(P^{(p,1^{p-1})})$ has a Specht filtration with quotients labelled by $(p+1,1^{p-1})$,
$(p,1^p)$ and $(p,2,1^{p-2})$. All of these partitions belong to $B_{0,0}$, so that
$$\ind_{\mathfrak{S}_{2p-1}}^{\mathfrak{S}_{2p}}(P^{(p,1^{p-1})})=P^{(p,1^{p-1})}\uparrow^{B_{0,0}}\,.$$

Now assume that $P:= P^{(p,1^{p-1})}\uparrow^{B_{0,0}}$ was decomposable. Then this module would
have an
indecomposable (projective) direct summand isomorphic to one of the Specht modules labelled
by $(p+1,1^{p-1})$, $(p,1^p)$, or $(p,2,1^{p-2})$. But such a Specht module would then be simple and projective, which is not possible
in a block of weight greater than $0$. Note that this argument uses the hypothesis $p\geq 5$, so that, by
\cite{HN2004} the
multiplicity of a Specht module in any Specht filtration of $P$ is unique. Alternatively, one could
 also examine the endomorphism algebra of $P$.
 
Hence $P$
must be indecomposable. The labels of its  Specht quotients with respect to the
$[2^p]$-abacus are:

$$(p+1, 1^{p-1}) = \langle 1\rangle, \ (p, 2, 1^{p-2}) = \langle p, 1\rangle, \ \ (p, 1^p) = \langle p, p\rangle\,.
$$

Recall from \ref{noth Specht filtration}, that  if $S^\mu$ is isomorphic to a subquotient of any Specht filtration of $P^\lambda$, then
$\mu=\lambda$ or $\lambda>\mu$. Given the Specht quotients of $P$, this shows that 
$$P=P^{(p,1^{p-1})}\uparrow^{B_{0,0}}\cong P^{(p+1,1^{p-1})}=P^{\langle 1\rangle}\,.$$
\end{noth}

\begin{noth}\label{noth induce from w=1}
{\bf Inducing from a block of $F\mathfrak{S}_{2p-1}$ of weight 1.}\, 
(a)\, We discuss first an appropriate parametrization of the partitions in blocks of $F\mathfrak{S}_{2p-1}$ of weight 1  that are relevant 
for our investigations, so that we end up with our fixed
labelling. For $B_{0, 0}$ we have previously used the $[2^p]$-abacus;  we have numbered the runners as $1, 2, \ldots, p$ from left to right, and used this to parametrize the
partitions.

We can also represent all partitions in $B_{0,0}$ on  the $[3^a, 2^{p-a}]$-abacus, for any $1\leq a <p$. 
More precisely, starting with the $[2^p]$-abacus, we insert one bead at each of the positions $1, 2, \ldots, a$, and then shift
all other beads and gaps by $a$ places. This means that a gap on  runner $i$ in the $[2^p]$-abacus is now on the $(i+a)$th runner (taking $i+a$ modulo $p$).
For example, a gap on runner $1$ of the $[2^p]$-abacus becomes a gap on runner $1+a$ of the $[3^a, 2^{p-a}]$-abacus.  
We label the runners of the new abacus  cyclically,  so that the rightmost runner has label $p-a$, that is,
the labels of the runners of this new abacus are  
$$(p-a+1, p-a+2, \ldots, p, 1, 2, \ldots, p-a). 
$$
Then a gap on the runner {\it  labelled with $j$} comes from a gap that {\it  is}  on  runner $j$ of the $[2^p]$-abacus. 
We use this observation in the following.

\smallskip

(b)\, Now we need to determine which blocks $B$ of $F\mathfrak{S}_{2p-1}$ of weight 1 contain a Specht module $S^\lambda$
with $S^\lambda\uparrow^{B_{0,0}}\neq\{0\}$. Equivalently, such a block $B$ satisfies $S^\mu\downarrow_B\neq \{0\}$, for some
Specht module $S^\mu$ in $B_{0,0}$. Thus, let  $S^\mu$ be a Specht module in $B_{0,0}$ and represent $\mu$ on the
$[2^p]$-abacus. To obtain the (uniquely determined) multiplicity of a Specht $F\mathfrak{S}_{2p-1}$-module
in any Specht filtration of $S^\mu\downarrow_{\mathfrak{S}_{2p-1}}$, we distinguish three cases:
suppose first that any gap occurs on some runner $j$ with $2\leq j \leq p-1$.  Then any bead which can be moved one place to the left occurs on some runner 
$i$ with $2\leq i\leq p$ and can be moved to runner $i-1$. In each case, the corresponding Specht quotient $S^{\lambda}$ of $S^\mu\downarrow_{\mathfrak{S}_{2p-1}}$ 
belongs  to a block whose $p$-core is represented on the $[2^k,3,1,2^l]$-abacus, for some
$k,l\in\{0,\ldots,p\}$ with $k+l=p-2$. These are precisely the hook partitions of $p-1$.

Second, suppose that there is a gap on runner $1$, that is $\mu=\langle 1\rangle$ or $\mu=\langle 1,i\rangle$, for some $i\in\{1,\ldots,p\}$.
If $\mu=\langle 1,2\rangle=(2^p)$, then $S^\mu\downarrow_{\mathfrak{S}_{2p-1}}\cong S^{(2^{p-1},1)}$, which belongs to the block
labelled by the hook partition $(2,1^{p-3})$ of $p-1$.

If $\mu=\langle 1,1\rangle=(1^{2p})$, then $S^\mu\downarrow_{\mathfrak{S}_{2p-1}}\cong S^{(1^{2p-1})}$, which belongs to the block
labelled by the hook partition $(1^{p-1})$ of $p-1$.

If $\mu=\langle 1\rangle=(p+1,1^{p-1})$, then $S^\mu\downarrow_{\mathfrak{S}_{2p-1}}$ has a Specht quotient isomorphic to $S^{(p+1,1^{p-2})}$ in the block
with $p$-core $(1^{p-1})$, and a Specht quotient isomorphic to $S^{(p,1^{p-1})}$ in the block of weight 0.

If $\mu=\langle 1,i\rangle$ with $i\geq 3$, then the Young diagram of $\mu$ has three removable nodes, which correspond to beads on runners $2$ and $i$ and $i+1$ (when $i<p$).
Moving the bead one place to the left in each case  yields a partition in a weight-1 block labelled by a hook partition, as we have just seen above. 
Let $i=p$, so that  $\langle 1, p\rangle = (p, 2, 1^{p-2})$, then  the restriction to  
$\mathfrak{S}_{2p-1}$ has a Specht quotient labelled by  the partition $(p,1^{p-1})$ of $2p-1$ belonging to the block of weight 0, which has to be the case not yet covered. 

Third, suppose there is a gap on runner $p$ with $\mu$ not yet considered, then $\mu = \langle i, p\rangle$ for $2\leq i\leq p$ or $\mu = \langle p\rangle$. 

If $\mu = \langle i, p\rangle$ for $2\leq i < p-1$ then  the Young diagram has three removable nodes. The abacus presentation yields  three
beads which can be moved by one place to the left and in each case the corresponding Specht quotient of the restriction lies in a weight-1 block labelled by a hook partition.

If $\mu = \langle p-1, p\rangle = (p^2)$ there is only one removable node, and the restriction is isomorphic to a  Specht module in  a block whose core is a hook partition, similarly
for $\mu = \langle p \rangle = (2p)$.

Let $\langle p, p\rangle = (p, 1^p)$, its restriction 
 to $\mathfrak{S}_{2p-1}$ has Specht quotients labelled by $(p-1, 1^p)$, in the block with core $(p-1)$,  and $(p, 1^{p-1})$, 
in a block of weight $0$. 
\smallskip

To summarize, we now know that, whenever, $S^\lambda$ is a Specht $F\mathfrak{S}_{2p-1}$-module in a block of weight 1
with $S^\lambda\uparrow^{B_{0,0}}\neq\{0\}$, then the $p$-core of the corresponding block is one of the $p-1$ hook partitions of $p-1$.
Therefore, for $s\in\{2,\ldots,p\}$,
we from now on denote the block of $F\mathfrak{S}_{2p-1}$ with $p$-core $(s-1,1^{p-s})$
by $B_s$. The $p$-core as well as all partitions of $B_s$ will be represented on a $[4,2,3^{p-2},2^{s-2}]$-abacus.
As above, we label the runners of this abacus from left to right, by $s-1,s,s+1,\ldots,p,1,2,\ldots,s-2$. Thus, in particular, for $s<p$, the rightmost runner with three beads has label $p$. 

With this notation, the partitions of $B_s$ are obtained by moving exactly one bead on some runner $i$ of the $[4,2,3^{p-2},2^{s-2}]$-abacus
of $(s-1,1^{p-s})$ one position down; we shall denote the resulting partition by $\langle i\rangle$ (when $s$ is fixed). 
Then,
for a fixed $s\in\{1,\ldots,p\}$, and with \ref{noth abacus and order}, we obtain the following lexicographic ordering on the partitions of $B_s$:

\begin{itemize}

\item[(i)] $\langle 1\rangle=\langle s-1\rangle>\langle p\rangle>\langle p-1\rangle>\cdots >\langle 3\rangle>\langle 2\rangle \,,\text{ for } s=2\,;$

\item[(ii)] $\langle s-1\rangle =\langle p\rangle >\langle p-1\rangle >\cdots >\langle s+1\rangle >\langle s-2\rangle >\cdots \langle 1\rangle >\langle s\rangle \,,\text{ for } 3\leq s<p\,;$

\item[(iii)] $\langle p-1\rangle =\langle p-2\rangle >\cdots >\langle 1\rangle >\langle p\rangle \,,\text{ for } s=p\,.$
 
\end{itemize}

\smallskip

(c)\, For $s\in\{2,\ldots,p\}$, the decomposition matrix of $B_s$ is well known from the theory of blocks with cyclic defect groups. Thus,
by Brauer Reciprocity and \ref{noth Specht filtration}, one also knows the Specht factors occurring in any Specht filtration of any indecomposable projective $B_s$-module.
More precisely, for every $p$-regular partition $\lambda$ of $B_s$, one has $P^\lambda\sim S^\lambda\oplus S^{\tilde{\lambda}}$, where
$\tilde{\lambda}$ denotes the lexicographically next smaller partition of $B_s$. So, with (i)-(iii) above, this gives the following information that
will be crucial for the proof of Theorem~\ref{thm induce from w=1} below:

\smallskip

\begin{center}
\begin{tabular}{|c|c|c|}\hline
$s=2$ & $3\leq s\leq p-1$ &$s=p$\\\hline\hline
$P^{\langle s-1\rangle}\sim S^{\langle s-1\rangle}\oplus S^{\langle p\rangle}$&$P^{\langle s-1\rangle}\sim S^{\langle s-1\rangle}\oplus S^{\langle s-2\rangle}$&$P^{\langle s-1\rangle}\sim S^{\langle s-1\rangle}\oplus S^{\langle s-2\rangle}$\\
$P^{\langle p\rangle}\sim S^{\langle p\rangle}\oplus S^{\langle p-1\rangle}$&$P^{\langle p\rangle}\sim S^{\langle p\rangle}\oplus S^{\langle p-1\rangle}$&$P^{\langle s-2\rangle}\sim S^{\langle s-2\rangle}\oplus S^{\langle s-3\rangle}$\\
$\vdots$&$\vdots$&$\vdots$\\
$\vdots$&$P^{\langle s+1\rangle}\sim S^{\langle s+1\rangle}\oplus S^{\langle s-2\rangle}$&$\vdots$\\
$\vdots$&$\vdots$&$\vdots$\\
$P^{\langle 3\rangle}\sim S^{\langle 3\rangle}\oplus S^{\langle 2\rangle}$&$P^{\langle 1\rangle}\sim S^{\langle 1\rangle}\oplus S^{\langle s\rangle}$&$P^{\langle 1\rangle}\sim S^{\langle 1\rangle}\oplus S^{\langle s\rangle}$\\\hline
\end{tabular}
\end{center}

\smallskip

(d)\, By the Branching Theorem \cite[Theorem~9.2]{James1978}, whenever $s\in\{2,\ldots,p\}$ and $i\in\{1,\ldots,p\}$, the $F\mathfrak{S}_{2p}$-module $\ind_{\mathfrak{S}_{2p-1}}^{\mathfrak{S}_{2p}}(S^{\langle i\rangle})$
has a Specht filtration. The Specht factors occurring in any such filtration are unique up to isomorphism, since $p\geq 5$,
and their labelling partitions are obtained by moving a bead on some runner of $\langle i\rangle$ one position to the right.
In particular, the block component $S^{\langle i\rangle}\uparrow^{B_{0,0}}$ has a Specht filtration, and the Specht factors occurring
are labelled by those partitions that are obtained by moving a bead from runner $s-1$ of $\langle i\rangle$ to runner $s$. Thus, for
each $s\in\{2,\ldots,p\}$, we get

$$
S^{\langle j\rangle}\uparrow^{B_{0,0}}\sim \begin{cases} S^{\langle s-1\rangle}\oplus S^{\langle s\rangle}\,, & \text{ if } j=s-1\,,\\
S^{\langle s-1,s-1\rangle}\oplus S^{\langle s,s\rangle}\,, &\text{ if } j=s\,,\\
S^{\langle i,s-1\rangle}\oplus S^{\langle i,s\rangle}\,, &\text{ if } s<j\leq p\,,\\
 S^{\langle s-1,i\rangle}\oplus S^{\langle s,i\rangle}\,, &\text{ if } 1\leq j<s-1\,.\end{cases}$$
Alternatively, one may use arguments as in \cite[(2.6)]{Donkin}, which also work when $p=3$. 
\end{noth}

\begin{thm}\label{thm induce from w=1}
Up to isomorphism there are $\binom{p+1}{2}-2$ indecomposable projective $B_{0,0}$-modules
that are induced from a block of $F\mathfrak{S}_{2p-1}$ of weight $1$. Their labelling partitions 
in $[2^p]$-abacus notation and their
Specht factors are as follows

\begin{itemize}
\item[\rm{(a)}] $P^{\langle p\rangle}\sim S^{\langle p\rangle}\oplus S^{\langle p-1\rangle}\oplus S^{\langle p,p-2\rangle}\oplus S^{\langle p-1,p-2\rangle}$;
\item[\rm{(b)}] $P^{\langle s\rangle}\sim S^{\langle s\rangle}\oplus S^{\langle s-1\rangle}\oplus S^{\langle p,s\rangle}\oplus S^{\langle p,s-1\rangle}$, for $s\in \{2,\ldots,p-1\}$;
\item[\rm{(c)}] $P^{\langle s,1\rangle}\sim S^{\langle s,1\rangle}\oplus S^{\langle s-1,1\rangle}\oplus S^{\langle s,s\rangle}\oplus S^{\langle s-1,s-1\rangle}$, for $s\in \{3,\ldots,p\}$;
\item[\rm{(d)}] $P^{\langle s+1,s\rangle}\sim S^{\langle s+1,s\rangle}\oplus S^{\langle s+1,s-1\rangle}\oplus S^{\langle s,s-2\rangle}\oplus S^{\langle s-1,s-2\rangle}$, for $s\in \{3,\ldots,p-1\}$;
\item[\rm{(e)}] $P^{\langle 3,2\rangle}\sim S^{\langle 3,2\rangle}\oplus S^{\langle 3,1\rangle}\oplus S^{\langle 2,2\rangle}\oplus S^{\langle 1,1\rangle}$;
\item[\rm{(f)}] $P^{\langle r,s\rangle}\sim S^{\langle r,s\rangle}\oplus S^{\langle r-1,s\rangle}\oplus S^{\langle r,s-1\rangle}\oplus S^{\langle r-1,s-1\rangle}$,
for $p\geq r>s>1$ and $r-s>1$.
\end{itemize}
\end{thm}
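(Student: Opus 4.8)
The plan is to induce the indecomposable projective $B_s$-modules listed in \ref{noth induce from w=1}(c) to $B_{0,0}$, for all relevant $s\in\{2,\ldots,p\}$, and then show that each induced module is indecomposable projective, and identify it by its set of Specht factors. First I would recall from \ref{noth induce from w=1}(c) that for each $s$, the indecomposable projective $B_s$-modules are $P^{\langle i\rangle}$ with Specht factors $S^{\langle i\rangle}\oplus S^{\langle \tilde{i}\rangle}$, where $\langle\tilde{i}\rangle$ is the lexicographically next smaller partition in $B_s$. Inducing to $F\mathfrak{S}_{2p}$ and projecting onto $B_{0,0}$ keeps projectivity; by \ref{noth induce from w=1}(d), the Specht factors of $S^{\langle i\rangle}\uparrow^{B_{0,0}}$ are obtained by the explicit four-way case distinction there. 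Combining these two pieces of information, $P^{\langle i\rangle}\uparrow^{B_{0,0}}$ has (up to isomorphism, since $p\geq 5$, by \cite{HN2004}) a Specht filtration by four Specht modules, whose labels I would simply read off by running both the "next-smaller-in-$B_s$" rule and the "move bead from runner $s-1$ to runner $s$" rule. This yields the four Specht factors listed in each of the cases (a)--(f).

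Next I would prove indecomposability. The argument is the same as in \ref{noth induce from w=0}: if $P^{\langle i\rangle}\uparrow^{B_{0,0}}$ decomposed, it would have an indecomposable projective direct summand whose Specht filtration is a sub-multiset of the four Specht factors; but a projective indecomposable in a block of positive weight has Loewy length $5$ (by \ref{noth ingredients}(a), or rather \cite[Theorem I]{Scopes1995}), hence cannot equal a single Specht module (Loewy length $\leq 3$ by \ref{noth ingredients}(c)), and a direct sum of two or three Specht modules is not indecomposable. A cleaner way to phrase this: by \ref{noth Specht filtration}, in a Specht filtration of any $P^\lambda$ one has $(P^\lambda:S^\lambda)=1$ and all other Specht factors $S^\mu$ satisfy $\mu\rhd\lambda$; so the number of indecomposable summands of a projective module with a given Specht filtration is bounded below by the number of dominance-minimal labels among the Specht factors. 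I would check that in each of (a)--(f) the four labelling partitions have a unique dominance-minimal (equivalently, by \ref{noth abacus and order}(a), lexicographically smallest) element, forcing indecomposability. Then the label of $P^{\langle i\rangle}\uparrow^{B_{0,0}}$ is the \emph{largest} of its four Specht labels, again by \ref{noth Specht filtration} together with Brauer reciprocity: the top Specht quotient of $P^\lambda$ is $S^\lambda$ and everything else is more dominant, so the projective cover labelled by a partition $\lambda$ occurring in a four-term Specht filtration must have $\lambda$ equal to the unique dominance-maximal label; one reads this off from the lexicographic orderings in \ref{noth abacus and order}(a).

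Finally I would count. The $p$-regular partitions of $B_{0,0}$ number $\binom{p+1}{2}-1$ by \ref{noth abacus and order}(b); of these, $\langle 1\rangle$ is accounted for in \ref{noth induce from w=0} (induced from the weight-$0$ block), so there are $\binom{p+1}{2}-2$ remaining, and I would verify that the partitions appearing as labels in (a)--(f) are exactly these $\binom{p+1}{2}-2$ partitions, each exactly once. This is a bookkeeping check on the abacus notation: (a) gives $\langle p\rangle$; (b) gives $\langle s\rangle$ for $2\leq s\leq p-1$, i.e.\ $p-2$ partitions; together with (a) these are the $p-1$ partitions of the form $\langle s\rangle$ with $s\geq 2$, and none of the form $\langle 1\rangle$; (c) gives the $p-2$ partitions $\langle s,1\rangle$ for $3\leq s\leq p$ (recall $\langle 2,1\rangle$ is $p$-singular by \ref{noth abacus and order}(b)); (d) and (e) together give the $p-2$ partitions $\langle s+1,s\rangle$ for $2\leq s\leq p-1$ (the "consecutive-runner" pairs, excluding $\langle 2,1\rangle$); and (f) gives the remaining $\langle r,s\rangle$ with $r>s>1$, $r-s>1$. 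A quick tally shows these strata are disjoint and their sizes sum to $1+(p-2)+(p-2)+(p-2)+\binom{p-2}{2}=\binom{p+1}{2}-2$, matching the count of $p$-regular partitions of $B_{0,0}$ other than $\langle 1\rangle$, which completes the proof.

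I expect the main obstacle to be purely organizational rather than conceptual: correctly executing the double case analysis from \ref{noth induce from w=1}(d) on the $[4,2,3^{p-2},2^{s-2}]$-abacus of $B_s$ and then translating the resulting $F\mathfrak{S}_{2p}$-partitions back into the $[2^p]$-abacus notation $\langle\,\cdot\,\rangle$ used for $B_{0,0}$, keeping careful track of the cyclic relabelling of runners. The small-$s$ degeneracies (cases (a), (e), and the boundary behaviour when $s=p$ or when $\langle 2,1\rangle$ would appear) need to be handled separately and are where sign errors or off-by-one mistakes in the abacus translation are most likely; the indecomposability and labelling arguments, by contrast, are uniform and follow the template of \ref{noth induce from w=0}.
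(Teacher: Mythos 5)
Your overall route coincides with the paper's: induce the projective indecomposables of the weight-one blocks $B_s$ of $F\mathfrak{S}_{2p-1}$, combine \ref{noth induce from w=1}(c) and (d) to see that each induced module is projective with four Specht factors, and then identify each one as the projective cover labelled by the lexicographically largest of its four Specht labels; your final tally $1+3(p-2)+\binom{p-2}{2}=\binom{p+1}{2}-2$ is also correct. The genuine gap is the indecomposability step. Your Loewy-length argument only excludes a direct summand whose Specht filtration has a single factor, and your ``cleaner'' criterion is a non sequitur: what follows from \ref{noth Specht filtration} is only a \emph{lower} bound on the number of indecomposable summands (every dominance-extremal label among the Specht factors must be the label of some summand), so a unique extremal label gives the vacuous conclusion ``at least one summand''. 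Neither argument excludes a splitting into two projective indecomposables with two Specht factors each; for instance, in case (f) nothing you have said rules out $R\cong P^{\langle r,s\rangle}\oplus P^{\langle r-1,s\rangle}$ with $P^{\langle r,s\rangle}\sim S^{\langle r,s\rangle}\oplus S^{\langle r,s-1\rangle}$ and $P^{\langle r-1,s\rangle}\sim S^{\langle r-1,s\rangle}\oplus S^{\langle r-1,s-1\rangle}$. (You also assert in the same breath that the Specht factors of $P^\lambda$ other than $S^\lambda$ are \emph{more} dominant than $\lambda$ and that the label of a projective cover is the dominance-\emph{maximal} of its Specht labels; these are mutually contradictory, and the second is the correct one -- the inequality in \ref{noth Specht filtration} is used in \ref{noth induce from w=0} and in this proof in the form ``all other Specht factors of $P^\lambda$ are lexicographically smaller than $\lambda$''.)

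The paper closes exactly this gap by a global argument rather than a local one. Since $F\mathfrak{S}_{2p}\cong\ind_{\mathfrak{S}_{2p-1}}^{\mathfrak{S}_{2p}}(F\mathfrak{S}_{2p-1})$, every projective indecomposable $B_{0,0}$-module other than $P^{\langle 1\rangle}$ is a direct summand of one of the $x=\binom{p+1}{2}-2$ pairwise non-isomorphic induced modules $R_1,\dots,R_x$, whose lexicographically largest Specht labels $\lambda_1>\dots>\lambda_x$ are pairwise distinct and $p$-regular. One first shows, exactly by the lexicographic comparison you propose, that $R_i\cong P^{\lambda_i}\oplus Q_i$ where every indecomposable summand of $Q_i$ is some $P^{\lambda_j}$ with $j>i$; then one kills $Q_i$ by reverse induction on $i$: if $P^{\lambda_j}\mid Q_i$ with $j>i$, the inductive hypothesis gives $P^{\lambda_j}\cong R_j$, which already has four Specht factors, and comparing with the four Specht factors of $R_i$ (multiplicities being well defined since $p\geq 5$) yields a contradiction. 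To repair your proof you would need to add this reverse-induction/counting step, or some other argument excluding a two-plus-two splitting of each induced module; the unique-extremal-label observation alone cannot do it.
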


\begin{proof}
Since $F\mathfrak{S}_{2p}\cong \ind_{\mathfrak{S}_{2p-1}}^{\mathfrak{S}_{2p}}(F\mathfrak{S}_{2p-1})$, every indecomposable projective
$F\mathfrak{S}_{2p}$-module is isomorphic to a direct summand of the induction of some indecomposable projective $F\mathfrak{S}_{2p-1}$-module.
As we have seen in \ref{noth induce from w=0}, $P^{\langle 1\rangle}$  is the unique indecomposable projective $B_{0,0}$-module that is induced from
a block of  $F\mathfrak{S}_{2p-1}$ of weight 0; moreover, it has precisely three Specht factors. 

By \ref{noth induce from w=1} (c), (d), we obtain precisely $x:=\binom{p+1}{2}-2$ pairwise non-isomorphic
projective $B_{0,0}$-modules 
$R_1,\ldots,R_x$ that are obtained by inducing the indecomposable $F\mathfrak{S}_{2p-1}$-modules in blocks
of weight 1 to $B_{0,0}$. Furthermore, each of these block inductions has precisely four pairwise non-isomorphic Specht factors; the lexicographically largest labelling
partition $\lambda_i$ of $R_i$ is always $p$-regular, and $\lambda_i\neq \lambda_j$, for $i\neq j$.
Note that $P^{\langle 1\rangle}$ cannot be isomorphic to a direct summand of any $R_i$, since otherwise there would be a projective
$B_{0,0}$-module with only one Specht factor, which is impossible in a block of weight 2.

We may suppose that $\lambda_1>\lambda_2>\cdots>\lambda_x$. We show that $R_i\cong P^{\lambda_i}$,
for $i\in\{1,\ldots,x\}$. To do so, we first show that

\begin{equation}\label{eqn R dec}
R_i\cong P^{\lambda_i}\oplus Q_i\,,
\end{equation}
where $Q_i$ is a direct sum of indecomposable projective $B_{0,0}$-modules whose labelling partitions are belong
to $\{\lambda_{i+1},\ldots,\lambda_x\}$.
So let $i\in\{1,\ldots,x\}$.
There is a $p$-regular partition $\mu\in\{\lambda_1,\ldots,\lambda_x\}$ with $P^\mu\mid R_i$ and
$(P^\mu:S^{\lambda_i})=1$. Thus $\mu\geq \lambda_i$, by \ref{noth Specht filtration}. Since $(P^\mu:S^\mu)=1$, we also have
$(R_i:S^\mu)\neq 0$, hence $\lambda_i\geq \mu$ and then $\mu=\lambda_i$. 
If $\rho\neq \lambda_i$ is 
a $p$-regular partition of $B_{0,0}$ with $P^\rho\mid R_i$, then also $\rho\in\{\lambda_1,\ldots,\lambda_x\}$. Since
$(P^\rho:S^\rho)=1$, also $(R_i:S^\rho)\neq 0$, so that we must have $\lambda_i>\rho$ and $\rho\in\{\lambda_{i+1},\ldots,\lambda_x\}$.

This proves (\ref{eqn R dec}). Now we show that $Q_i=\{0\}$, for $i\in\{1,\ldots,x\}$. To do so, we argue by reverse induction on $i$.
For $i=x$, the assertion if clearly true. So let $i<x$, and assume that $Q_i\neq \{0\}$. Then, by (\ref{eqn R dec}), we would have
$P^{\lambda_j}\mid Q_i\mid R_i$, for some $j>i$. But, by induction, $P^{\lambda_j}\cong R_j$.
 Since both $R_j$ and $R_i$ have
precisely four Specht factors, this would imply $R_i\cong R_j$, a contradiction.

Now the assertion of the theorem follows from \ref{noth induce from w=1} (c), (d).
\end{proof}

To summarize, by \ref{noth induce from w=0} and Theorem~\ref{thm induce from w=1},
we have completely determined the columns of the decomposition matrix of the block $B_{0,0}$.
From this information, it is now straightforward to read off the rows of the decomposition matrix of $B_{0,0}$, that is, the
decomposition numbers of $B_{0,0}$, as well.
In Corollary~\ref{cor dec matrix} below, we shall in fact write down the Loewy
structures of the Specht modules in $B_{0,0}$. Before doing so,  we mention one last bit of information
concerning the $\partial$-values and colours of the partitions of $B_{0,0}$, which is immediate from
\ref{noth colour weight 2}.

\begin{lemma}\label{lemma B0 partial}
Identifying every partition of $B_{0,0}$ with its $[2^p]$-abacus, the partitions of $B_{0,0}$
have the following $\partial$-values:

\smallskip

\begin{center}
\begin{tabular}{|c|c|c|}\hline
$\partial$& $p$-regular & $p$-singular\\\hline\hline
$0$ (black)& $\langle p\rangle$& $\langle 2,1\rangle$\\
            & $\langle i+1,i\rangle$, $i\in\{2,\ldots,p-1\}$ \text{ odd }& \\\hline
            $0$ (white) & $\langle i+1,i\rangle$, $i\in\{2,\ldots,p-1\}$ \text{ even }& $\langle 1,1\rangle$\\\hline
$1$& $\langle p-1\rangle$ & $\langle 2,2\rangle$\\
      & $\langle i+2,i\rangle$, $i\in\{1,\ldots,p-2\}$&\\\hline
$2$& $\langle p-2\rangle$ & $\langle 3,3\rangle$\\
      & $\langle i+3,i\rangle$, $i\in\{1,\ldots,p-3\}$&\\\hline
$d\in\{3,\ldots,p-2\}$&     $\langle p-d\rangle$ & $\langle d+1,d+1\rangle$\\
      & $\langle i+d+1,i\rangle$, $i\in\{1,\ldots,p-d-1\}$&\\\hline  
$p-1$ & $\langle 1\rangle$ & $\langle p,p\rangle$\\\hline    
\end{tabular}
\end{center}
\end{lemma}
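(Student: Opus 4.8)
The plan is to classify the weight-$2$ partitions $\lambda$ of $B_{0,0}$ by their $[2^p]$-abacus display and, for each of the three possible shapes, read off the hook lengths divisible by $p$ and their leg lengths directly from \ref{noth hooks diagram}, thereby obtaining $\partial(\lambda)$ (and, when $\partial(\lambda)=0$, the colour via \ref{noth partial}(b)), exactly in the spirit of \ref{noth colour weight 2}. Since $\kappa_{B_{0,0}}=\emptyset$, on the $[2^p]$-abacus the core display has a bead in rows $0$ and $1$ of every runner, and moving beads down a total of $2$ rows yields precisely: the shape $\langle i\rangle$, in which runner $i$ has beads in rows $0$ and $3$; the shape $\langle i,i\rangle$, in which runner $i$ has beads in rows $1$ and $2$; and the shape $\langle j,i\rangle$ with $1\leq i<j\leq p$, in which runners $i$ and $j$ each have beads in rows $0$ and $2$; in all three cases every other runner keeps beads in rows $0$ and $1$.

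First I would handle $\lambda=\langle i\rangle$: by \ref{noth hooks diagram}, $H_\lambda$ has an entry $2p$ (the bead in row $3$ of runner $i$ paired with the gap in row $1$) and an entry $p$ (the same bead paired with the gap in row $2$), and the two rim $p$-hooks must be stripped in the forced order ``bead from row $3$ to row $2$'' (passing no bead) followed by ``bead from row $2$ to row $1$'' (passing exactly the row-$1$ beads on runners $i+1,\dots,p$), so their leg lengths are $0$ and $p-i$ and $\partial(\langle i\rangle)=p-i$. Next, for $\lambda=\langle i,i\rangle$ the forced order is ``row-$1$ bead of runner $i$ up to row $0$'' (passing the row-$0$ beads on runners $i+1,\dots,p$ together with the row-$1$ beads on runners $1,\dots,i-1$, i.e.\ $p-1$ beads in total) followed by ``row-$2$ bead up to row $1$'' (passing $p-i$ beads), so $\partial(\langle i,i\rangle)=|(p-1)-(p-i)|=i-1$. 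Finally, for $\lambda=\langle j,i\rangle$ with $i<j$ the two movable beads lie on different runners; stripping the rim $p$-hook on runner $i$ first (leg length $p-i-1$, since of the row-$1$ beads on runners $i+1,\dots,p$ precisely the one on runner $j$ is absent) and then the one on runner $j$ (leg length $p-j$) gives $\partial(\langle j,i\rangle)=|(p-i-1)-(p-j)|=j-i-1$; the value is independent of the order by \cite[Lemma~4.1]{Richards1996}.

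For the colours I would only need the partitions with $\partial(\lambda)=0$, namely $\langle p\rangle$, $\langle i+1,i\rangle$ for $1\leq i\leq p-1$, and $\langle 1,1\rangle$. By \ref{noth partial}(b): for $\langle p\rangle$ the $2p$-hook has leg length $p-p=0\equiv 0\pmod 4$, so $\langle p\rangle$ is black; for $\langle 1,1\rangle$ the $2p$-hook has leg length $2p-1\equiv 1\pmod 4$ (as $p$ is odd), so $\langle 1,1\rangle$ is white; and for $\langle i+1,i\rangle$ the two hook-diagram entries both equal $p$, with leg lengths $p-i-1$ and $p-i$, so the larger, $p-i$, is even precisely when $i$ is odd (again using $p$ odd), whence $\langle i+1,i\rangle$ is black if and only if $i$ is odd. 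In particular $\langle 2,1\rangle$ is black.

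It then remains to transcribe these formulas into the table, sorting the $p$-regular from the $p$-singular partitions according to the list in \ref{noth abacus and order}(b) (the $p$-singular ones being $\langle 2,1\rangle$ and the $\langle j,j\rangle$ for $1\leq j\leq p$). A bookkeeping check closes the argument: the $\partial$-values of the $\langle i\rangle$'s and of the $\langle i,i\rangle$'s sweep out $\{0,\dots,p-1\}$ and those of the $\langle j,i\rangle$'s sweep out $\{0,\dots,p-2\}$, so all $2p+\binom{p}{2}$ partitions of $B_{0,0}$ occur in the table and in the prescribed rows. I expect the only point requiring genuine care to be the leg-length bookkeeping: for $\langle i\rangle$ and $\langle i,i\rangle$ the two rim $p$-hooks come off in a forced order, so the second leg length must be computed on the already-modified abacus, whereas for $\langle j,i\rangle$ the two hooks are independent; but since $\partial$ is well defined this causes no real difficulty, and the whole lemma is essentially a careful unwinding of \ref{noth colour weight 2}.
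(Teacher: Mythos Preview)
Your proof is correct and takes essentially the same approach as the paper, which simply asserts that the lemma is immediate from \ref{noth colour weight 2}. You have spelled out the explicit leg-length computations on the $[2^p]$-abacus that the paper leaves to the reader; the case analysis ($\langle i\rangle$, $\langle i,i\rangle$, $\langle j,i\rangle$) and the resulting formulae $\partial(\langle i\rangle)=p-i$, $\partial(\langle i,i\rangle)=i-1$, $\partial(\langle j,i\rangle)=j-i-1$, together with the colour determinations, are exactly what is needed.
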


\begin{cor}\label{cor dec matrix}
Identifying every partition of $B_{0,0}$ with its $[2^p]$-abacus, the Specht modules in $B_{0,0}$ have 
the following Loewy structures:

\begin{itemize}
\item[{\rm (a)}] For $i\in\{1,\ldots,p-1\}$, one has $S^{\langle i\rangle}\ \approx \ \begin{matrix} D^{\langle i\rangle}\\ D^{\langle i+1\rangle}\end{matrix}$. Moreover, $S^{\langle p\rangle}\cong D^{\langle p\rangle}$.

\item[{\rm(b)}] For $i\in\{3,\ldots,p-1\}$, one has $S^{\langle i,i\rangle}\ \approx \ \begin{matrix} D^{\langle i,1\rangle}\\ D^{\langle i+1,1\rangle}\end{matrix}$. Moreover,
$$S^{\langle p,p\rangle}\ \approx \  \begin{matrix}D^{\langle p,1\rangle}\\ D^{\langle 1\rangle}\end{matrix}\,,\quad 
S^{\langle 2,2\rangle}\ \approx \  \begin{matrix}D^{\langle 3,2\rangle}\\ D^{\langle 3,1\rangle}\end{matrix}\,,\quad
\text{ and} \quad S^{\langle 1,1\rangle}\cong D^{\langle 3,2\rangle}.$$

\item[{\rm (c)}] For $1\leq i<j\leq p$ with $j-i\geq 3$, one has
$$S^{\langle j,i\rangle}\ \approx \  \begin{cases}
\begin{matrix} D^{\langle j,i\rangle}\\ D^{\langle j,i+1\rangle}\oplus D^{\langle j+1,i\rangle}\\ D^{\langle j+1,i+1\rangle}\end{matrix} &\text{ if } j\neq p\,,\\
&\\
\begin{matrix}D^{\langle p,i\rangle}\\ D^{\langle p,i+1\rangle}\oplus D^{\langle i\rangle}\\ D^{\langle i+1\rangle}\end{matrix} &\text{ if } j= p\,.
\end{cases}$$

\item[{\rm (d)}] For $1\leq i<j\leq p$ with $j-i=2$, one has
$$S^{\langle j,i\rangle}\ \approx \  \begin{cases}
\begin{matrix}D^{\langle j,i\rangle}\\ D^{\langle j,i+1\rangle}\oplus D^{\langle j+1,i\rangle}\oplus D^{\langle j+1,i+1\rangle}\\ D^{\langle j+1,i+2\rangle}\end{matrix} &\text{ if } j\neq p\,,\\
&\\
\begin{matrix}D^{\langle p,p-2\rangle}\\ D^{\langle p,p-1\rangle}\oplus D^{\langle p\rangle}\oplus D^{\langle p-2\rangle} \\ D^{\langle p-1\rangle}\end{matrix} &\text{ if } j= p\,.
\end{cases}$$

\item[{\rm (e)}] For $1\leq i<j\leq p$ with $j-i=1$, one has
$$S^{\langle j,i\rangle}\ \approx \  \begin{cases}
\begin{matrix}D^{\langle j,i\rangle}\\ D^{\langle j+1,i\rangle}\\ D^{\langle j+2,i+2\rangle}\end{matrix} &\text{ if } i\notin\{1,p-2,p-1\}\,,\\
&\\
\begin{matrix} D^{\langle p-1,p-2\rangle}\\ D^{\langle p,p-2\rangle}\\ D^{\langle p\rangle}\end{matrix} &\text{ if } i=p-2\,,\\
&\\
\begin{matrix} D^{\langle p,p-1\rangle}\\ D^{\langle p-1\rangle}\end{matrix} &\text{ if } i=p-1\,,\\
&\\
\begin{matrix}D^{\langle 3,1\rangle}\\ D^{\langle 4,3\rangle}\end{matrix} &\text{ if } i=1\,.\\
\end{cases}$$
\end{itemize}
\end{cor}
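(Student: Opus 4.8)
The overall plan is to read the Loewy structures off the decomposition matrix of $B_{0,0}$, whose columns --- the Specht filtrations of the projective indecomposables --- are completely described in \ref{noth induce from w=0} and Theorem~\ref{thm induce from w=1}. By Brauer Reciprocity (\ref{noth Specht filtration}) one has $[S^\mu:D^\lambda]=(P^\lambda:S^\mu)$ for every partition $\mu$ and every $p$-regular partition $\lambda$ of $B_{0,0}$, and all these multiplicities are $0$ or $1$ because, by \ref{noth ingredients}(a), every Specht module in $B_{0,0}$ is multiplicity-free. Hence the first step is purely combinatorial: for each label $\mu$ appearing in (a)--(e), collect all $p$-regular $\lambda$ such that $S^\mu$ occurs as a Specht factor of $P^\lambda$ in the lists \ref{noth induce from w=0} and Theorem~\ref{thm induce from w=1}(a)--(f). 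This yields the exact composition-series content of $S^\mu$; for a pair $\langle j,i\rangle$ away from the boundary of the abacus only Theorem~\ref{thm induce from w=1}(f) contributes (near the boundary also (c) and (d)), and the analogous statement for $\langle i\rangle$ and $\langle i,i\rangle$ uses (a)--(c).

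The second step distributes these composition factors into Loewy layers, using three inputs. First, by \ref{noth ingredients}(c), $S^\mu$ has Loewy length $3$ if $\mu$ is both $p$-regular and $p$-restricted, and Loewy length at most $2$ otherwise; by \ref{noth abacus and order}(b) the only $p$-singular partitions of $B_{0,0}$ are $\langle 2,1\rangle$ and $\langle j,j\rangle$ for $1\leq j\leq p$, and these are all $p$-restricted, so that every partition of $B_{0,0}$ is $p$-regular or $p$-restricted, while the $p$-regular partitions that fail to be $p$-restricted are read off directly from the $[2^p]$-abacus. Second, for $p$-regular $\mu$ the head of $S^\mu$ is the simple module $D^\mu$, and for $p$-restricted $\mu$ the socle of $S^\mu$ is the simple module $D^{\mu_+}$ (cf.\ \ref{noth part}(b) and \ref{noth partial}(c)). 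Third, if $S^\mu$ has Loewy length $3$ then $\Rad^2(S^\mu)$ is a non-zero semisimple submodule, hence equals $\Soc(S^\mu)=D^{\mu_+}$, and the remaining composition factors constitute the middle layer $\Rad(S^\mu)/\Rad^2(S^\mu)$; if $S^\mu$ has Loewy length $2$ then, since $S^\mu$ is indecomposable (it has simple head or simple socle), $\Rad(S^\mu)=\Soc(S^\mu)$ is the semisimple bottom layer and the complementary composition factor(s) form the head, while Loewy length $1$ just means $S^\mu$ is simple. Combining this with the first step produces the asserted Loewy diagrams.

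A small additional point is to decide which composition factor actually sits in the socle, equivalently to identify $\mu_+$ (and, when $\mu$ is $p$-singular so that $D^\mu$ does not exist, which factor is the head). Here I would use that $\partial(\mu_+)=\partial(\mu)$ by the very definition of $\mu_+$ (\ref{noth partial}(c)), that by \ref{noth ingredients}(b) two composition factors of $S^\mu$ lying in adjacent Loewy layers have $\partial$-values differing by $1$, and the table of $\partial$-values and colours in Lemma~\ref{lemma B0 partial}: these together pin down the bottom layer, and then the lexicographic condition $\mu_+>\mu$ fixes $\mu_+$. The colour (needed only for partitions in the $\partial=0$ row) is likewise recorded in Lemma~\ref{lemma B0 partial}.

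The main obstacle I anticipate is organisational rather than conceptual: the boundary partitions --- those with a movable bead on runner $1$ or on runner $p$ in the $[2^p]$-display, together with the $p$-singular labels $\langle j,j\rangle$ and $\langle 2,1\rangle$ --- are scattered across cases (a)--(e) of Theorem~\ref{thm induce from w=1} rather than being governed by the uniform pattern (f), so for each such $\mu$ one must verify case by case that no projective outside the expected short list has $S^\mu$ as a Specht factor, and that the head/socle and $\partial$-value constraints force exactly the claimed layer assignment. The number of such special cases is small and finite (essentially $j\in\{p-1,p\}$ and $i\in\{1,p-2,p-1\}$, plus the $p$-singular labels), so this bookkeeping is routine once the generic argument is in place.
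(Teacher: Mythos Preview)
Your proposal is correct and follows essentially the same approach as the paper's proof: both extract the composition factors of each Specht module from the projective Specht filtrations in \ref{noth induce from w=0} and Theorem~\ref{thm induce from w=1} via Brauer Reciprocity, invoke the Chuang--Tan Loewy-length criterion (your \ref{noth ingredients}(c)), use that $\Hd(S^\mu)\cong D^\mu$ for $p$-regular $\mu$ and $\Soc(S^\mu)\cong D^{\mu_+}$ for $p$-restricted $\mu$, and identify $\mu_+$ among the composition factors by matching $\partial$-values and colours through Lemma~\ref{lemma B0 partial}. Your write-up is slightly more explicit about the bipartite structure and the bookkeeping at the boundary, but the argument is the same.
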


\begin{proof}
By Theorem~\ref{thm induce from w=1} and \ref{noth induce from w=0}, we know the columns
of the decomposition matrix of $B_{0,0}$. Given these, it is straightforward to determine the rows of this decomposition
matrix, that is, the composition factors of the Specht modules in $B_{0,0}$.  In particular, we see that 
$S^{\langle p\rangle}\cong D^{\langle p\rangle}$ and $S^{\langle 1,1\rangle}\cong D^{\langle 3,2\rangle}$.

Next, by \cite[Proposition 6.2]{ChuangTan2001}, a Specht module $S^\lambda$ in $B_{0,0}$ has Loewy length at most 3, and has Loewy length 3 if and only if $\lambda$ is both $p$-regular and $p$-restricted. If so, $S^\lambda$ has head isomorphic to $D^\lambda$, by
\cite[Corollary 12.2]{James1978} and socle isomorphic to $D^{\lambda_+}$, by \ref{noth part}(b) and \ref{noth partial}(c). Since
$\lambda_+$ has the same $\partial$-value and, if $\partial(\lambda)=0$, the same colour as $\lambda$, we deduce
the Loewy structures of the Specht modules in (c) and (d) as well as those of $S^{\langle i+1,i\rangle}$ with
$i\in\{2,\ldots,p-2\}$ from Lemma~\ref{lemma B0 partial}.

If $\lambda$ is a $p$-regular partition of $B_{0,0}$ such that $S^\lambda$ has exactly two composition factors, then
$S^\lambda$ is of course uniserial with head isomorphic to $D^\lambda$. 

Hence, it remains to establish the Loewy structures of the Specht modules labelled by the $p$-singular
partitions $\langle i,i\rangle$ with $i\in\{2,\ldots, p\}$, and $\langle 2,1\rangle$, respectively.
By \ref{noth part}, we know that, whenever $\lambda$ is $p$-restricted, $S^\lambda$ has socle isomorphic to $D^{\mathbf{m}(\lambda')}$, and
$\mathbf{m}(\lambda')=\lambda_+$, by \ref{noth partial}(b). Since $\lambda_+$ has the same $\partial$ value and, if
$\partial(\lambda)=0$, the same colour as $\lambda$, Lemma~\ref{lemma B0 partial}
implies $\Soc(S^{\langle i,i\rangle})\cong D^{\langle i+1,1\rangle}$, for
$i\in\{2,\ldots,p-1\}$, $\Soc(S^{\langle p,p\rangle})\cong D^{\langle 1\rangle}$, and $\Soc(S^{\langle 2,1\rangle})\cong D^{\langle 4,3\rangle})$. 

This completes the proof of the corollary.
\end{proof}

\begin{rem}\label{rem peel}
We emphasize that the partitions treated in part (a) and (b) of Corollary~\ref{cor dec matrix} are precisely the hook partitions 
of $B_{0,0}$, so that these assertions also follow from Peel's results in \cite{Peel}. 
\end{rem}

We are now in the position to describe how the Ext-quiver of $B_{0,0}$ is encoded in the decomposition
matrix of $B_{0,0}$, using general information from \cite{Scopes1995, Richards1996, ChuangTan2001}. As already mentioned at the
beginning of this section, this quiver was first computed by Martin \cite{Martin1989}. Although Richards' work had not yet been
available at that time, Martin's way to draw the quiver is the same that we shall now describe.

\begin{noth}\label{noth quiver B0}
{\bf The Ext-quiver of $B_{0,0}$.}\, 
(a)\, We draw a quiver with $p$ rows, which we label by $0,1,\ldots,p-1$, from top to bottom. In row $i\in\{0,\ldots,p-1\}$ we draw a 
vertex for each $p$-regular partition of $B_{0,0}$ that has $\partial$-value $i$. We order the partitions in row $i\in\{1,\ldots,p-1\}$
with respect to the lexicographic ordering $>$, from left to right. In row $0$, the 
leftmost vertex corresponds to the partition $\langle p\rangle$, which is the largest black partition. Next we draw
$\langle p,p-1\rangle$, which is the largest white partition. From then on, black and white partitions alternate in decreasing
lexicographic ordering, from left to right.

\smallskip

(b)\, Let $\lambda$ be a $p$-regular partition of $B_{0,0}$ corresponding to a vertex in row $i$ of the quiver we have just drawn, that is, 
$\partial(\lambda)=i$. By \cite[Theorem~6.1]{ChuangTan2001}, $\lambda$ can only be connected to a vertex
$\mu$ that lies in row $i-1$ or in row $i+1$. Moreover, if $\mu$ is a vertex in row $i-1$ or $i+1$, then
$\mu$ and $\lambda$ are connected if and only if one of the following holds:

\quad (i)\, $\lambda>\mu$ and $[S^\mu:D^\lambda]=1$, or

\quad (ii)\, $\mu>\lambda$ and $[S^\lambda:D^\mu]=1$.

In case (i) we draw an arrow $\lambda\to \mu$, in case (ii) we draw an arrow $\mu\to \lambda$.
Thus, representing every partition of $B_{0,0}$ on a $[2^p]$-abacus and invoking Corollary~\ref{cor dec matrix} and Lemma~\ref{lemma B0 partial}, we obtain the following quiver, which
equals $Q_{0,0}(p)$ in Section~\ref{sec quiv}, with respect to the lexicographic ordering on partitions:

\begin{center}
\begin{tikzpicture}
\coordinate[label=above:{\tiny $\langle p\rangle$}] (0,0) at (0,0);
\coordinate[label=above:{\tiny $\langle p,p-1\rangle$}] (1,0) at (1,0);
\coordinate  (2,0) at (2,0);
\coordinate (2.5,0) at (2.5,0);
\coordinate (3,0) at (3,0);
\coordinate (3.5,0) at (3.5,0);
\coordinate[label=above:{\tiny $\langle 5,4\rangle$}] (4,0) at (4,0);
\coordinate[label=above:{\tiny $\langle 4,3\rangle$}] (5,0) at (5,0);
\coordinate[label=above:{\tiny $\langle 3,2\rangle$}] (6,0) at (6,0);

\coordinate[label=left:{\tiny $\langle p-1\rangle$}] (0,-0.5) at (0,-0.5);
\coordinate[label=below:{\tiny $\langle p,p-2\rangle$}] (1,0.5) at (1,-0.5);
\coordinate (2,-0.5) at (2,-0.5);
\coordinate (2.5,-0.5) at (2.5,-0.5);
\coordinate (3,-0.5) at (3,-0.5);
\coordinate (3.5,-0.5) at (3.5,-0.5);
\coordinate (4,-0.5) at (4,-0.5);
\coordinate[label=right:{\tiny $\langle 4,2\rangle$}] (5,-0.5) at (5,-0.5);
\coordinate[label=right:{\tiny $\langle 3,1\rangle$}] (6,-0.5) at (6,-0.5);

\coordinate[label=left:{\tiny $\langle p-2\rangle$}] (0.5,-1) at (0.5,-1);
\coordinate[label=below:{\tiny $\langle p,p-3\rangle$}] (1.5,-1) at (1.5,-1);
\coordinate (2,-1) at (2,-1);
\coordinate (2.5,-1) at (2.5,-1);
\coordinate (3,-1) at (3,-1);
\coordinate (3.5,-1) at (3.5,-1);
\coordinate (4,-1) at (4,-1);
\coordinate[label=right:{\tiny $\langle 5,2\rangle$}] (4.5,-1) at (4.5,-1);
\coordinate[label=right:{\tiny $\langle 4,1\rangle$}] (5.5,-1) at (5.5,-1);

\coordinate[label=left:{\tiny $\langle p-3\rangle$}] (1,-1.5) at (1,-1.5);
\coordinate (2,-1.5) at (2,-1.5);
\coordinate (2.5,-1.5) at (2.5,-1.5);
\coordinate (3,-1.5) at (3,-1.5);
\coordinate (3.5,-1.5) at (3.5,-1.5);
\coordinate (4,-1.5) at (4,-1.5);
\coordinate[label=right:{\tiny $\langle 5,1\rangle$}] (5,-1.5) at (5,-1.5);

\coordinate (1.5,-2) at (1.5,-2);
\coordinate (2.5,-2) at (2.5,-2);
\coordinate (3,-2) at (3,-2);
\coordinate (3.5,-2) at (3.5,-2);
\coordinate (4.5,-2) at (4.5,-2);

\coordinate[label=left:{\tiny $\langle 3\rangle$}] (2,-2.5) at (2,-2.5);
\coordinate[label=above:{\tiny $\langle p,2\rangle$}] (3,-2.5) at (3,-2.5);
\coordinate[label=right:{\tiny $\langle p-1,1\rangle$}] (4,-2.5) at (4,-2.5);

\coordinate[label=left:{\tiny $\langle 2\rangle$}]  (2.5,-3) at (2.5,-3);
\coordinate[label=right:{\tiny $\langle p,1\rangle$}]  (3.5,-3) at (3.5,-3);

\coordinate[label=below:{\tiny $\langle 1\rangle$}] (3,-3.5) at (3,-3.5);

\draw (0,0) node{$\bullet$};
\draw (1,0) node{$\bullet$};
\draw (2,0) node{$\bullet$};
\draw (2.5,0) node{$\cdot$};
\draw (3,0) node{$\cdot$};
\draw (3.5,0) node{$\cdot$};
\draw (4,0) node{$\bullet$};
\draw (5,0) node{$\bullet$};
\draw (6,0) node{$\bullet$};

\draw (0,-0.5) node{$\bullet$};
\draw (1,-0.5) node{$\bullet$};
\draw (2,-0.5) node{$\bullet$};
\draw (2.5,-0.5) node{$\cdot$};
\draw (3,-0.5) node{$\cdot$};
\draw (3.5,-0.5) node{$\cdot$};
\draw (4,-0.5) node{$\bullet$};
\draw (5,-0.5) node{$\bullet$};
\draw (6,-0.5) node{$\bullet$};

\draw[con] (0,0) -- (0,-0.5);
\draw[con] (0,0) -- (1,-0.5);
\draw[con] (0,-0.5) -- (1,0);
\draw[con] (1,0) -- (1,-0.5);
\draw[con] (1,-0.5) -- (2,0);
\draw[con] (1,0) -- (2,-0.5);
\draw[con] (2,0) -- (2,-0.5);
\draw[con] (4,0) -- (4,-0.5);
\draw[con] (4,0) -- (5,-0.5);
\draw[con] (4,-0.5) -- (5,0);
\draw[con] (5,0) -- (5,-0.5);
\draw[con] (5,-0.5) -- (6,0);
\draw[con] (6,0) -- (6,-0.5);
\draw[con] (5,0) -- (6,-0.5);

\draw (0.5,-1) node{$\bullet$};
\draw (1.5,-1) node{$\bullet$};
\draw (2.5,-1) node{$\cdot$};
\draw (3,-1) node{$\cdot$};
\draw (3.5,-1) node{$\cdot$};
\draw (4.5,-1) node{$\bullet$};
\draw (5.5,-1) node{$\bullet$};

\draw[con] (0,-0.5) -- (0.5,-1);
\draw[con] (0.5,-1) -- (1,-0.5);
\draw[con] (1,-0.5) -- (1.5,-1);
\draw[con] (1.5,-1) -- (2,-0.5);

\draw[con] (4,-0.5) -- (4.5,-1);
\draw[con] (4.5,-1) -- (5,-0.5);
\draw[con] (5,-0.5) -- (5.5,-1);
\draw[con] (5.5,-1) -- (6,-0.5);

\draw (1,-1.5) node{$\bullet$};
\draw (2,-1.5) node{$\cdot$};
\draw (2.5,-1.5) node{$\cdot$};
\draw (3,-1.5) node{$\cdot$};
\draw (3.5,-1.5) node{$\cdot$};
\draw (4,-1.5) node{$\cdot$};
\draw (5,-1.5) node{$\bullet$};

\draw[con] (0.5,-1) -- (1,-1.5);
\draw[con] (1,-1.5) -- (1.5,-1);
\draw[con] (4.5,-1) -- (5,-1.5);
\draw[con] (5,-1.5) -- (5.5,-1);

\draw (1.5,-2) node{$\cdot$};
\draw (2.5,-2) node{$\cdot$};
\draw (3.5,-2) node{$\cdot$};
\draw (4.5,-2) node{$\cdot$};

\draw (2,-2.5) node{$\bullet$};
\draw (3,-2.5) node{$\bullet$};
\draw (4,-2.5) node{$\bullet$};

\draw (2.5,-3) node{$\bullet$};
\draw (3.5,-3) node{$\bullet$};

\draw (3,-3.5) node{$\bullet$};

\draw[con] (2.5,-3) -- (3,-3.5);
\draw[con] (3,-3.5) -- (3.5,-3);

\draw[con] (2,-2.5) -- (2.5,-3);
\draw[con] (2.5,-3) -- (3,-2.5);
\draw[con] (3,-2.5) -- (3.5,-3);
\draw[con] (3.5,-3) -- (4,-2.5);
\end{tikzpicture}
\end{center}

\end{noth}

\begin{thm}\label{thm quiver B0}
Let $p\geq 5$. The principal block $B_{0,0}$ of $F\mathfrak{S}_{2p}$ has Ext-quiver $Q_{0,0}(p)$ shown in (\ref{eqn (36)}), and
the principal block $B_{1,0}$ of $F\mathfrak{S}_{2p+1}$ has Ext-quiver $Q_{1,0}(p)$ shown in (\ref{eqn (37)}), with respect to the
lexicographic ordering on partitions.
\end{thm}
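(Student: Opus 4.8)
The plan is to deduce both assertions of Theorem~\ref{thm quiver B0} from the work already assembled in Appendix~\ref{sec B0}. For the block $B_{0,0}$, essentially all the work has been done: \ref{noth induce from w=0} and Theorem~\ref{thm induce from w=1} together describe every column of the decomposition matrix, Corollary~\ref{cor dec matrix} records the resulting Loewy structures of the Specht modules, Lemma~\ref{lemma B0 partial} gives the $\partial$-values and colours of all partitions, and \ref{noth quiver B0} explains exactly how to read the quiver off this data using \cite[Theorem~6.1]{ChuangTan2001}. So the proof for $B_{0,0}$ amounts to observing that the diagram produced at the end of \ref{noth quiver B0} is precisely the graph $Q_{0,0}(p)$ as defined in Appendix~\ref{sec quiv}, together with the stated ordering (rows indexed by $\partial$-value, vertices within a row in lexicographic order, with the convention for row $0$ that $\langle p\rangle$ comes first, then $\langle p,p-1\rangle$, then black and white alternating in decreasing lexicographic order). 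First I would spell out, for each pair of adjacent rows $i$, $i+1$ in Lemma~\ref{lemma B0 partial}, which edges arise: an edge between $\lambda$ (row $i$) and $\mu$ (row $i\pm1$) occurs exactly when $[S^\mu:D^\lambda]=1$ or $[S^\lambda:D^\mu]=1$, and these multiplicities are read from Corollary~\ref{cor dec matrix}. Carrying this out in the families (a)--(e) of Corollary~\ref{cor dec matrix} yields exactly the edges drawn in \ref{noth quiver B0}, and comparing with the definition of $Q_{0,0}(p)$ in Appendix~\ref{sec quiv} finishes this case.

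For the block $B_{1,0}$ of $F\mathfrak{S}_{2p+1}$, the cleanest route is to exhibit $(B_{1,0},B_{0,0})$ as a $(2:1)$-pair and apply the machinery of Section~\ref{sec weight 2}. Indeed, $B_{1,0}$ is the principal block of $F\mathfrak{S}_{2p+1}$ with $p$-core $(1)$ and $B_{0,0}$ the principal block of $F\mathfrak{S}_{2p}$ with $p$-core $\emptyset$; this is precisely case~(3) of Remark~\ref{rem l and r}. By \ref{noth partial Scopes}, the Ext-quiver of $B_{1,0}$ is obtained from that of $B_{0,0}$ by deleting the vertex $\bar\alpha$ together with its incident edges, relabelling every remaining vertex $\mu$ of $B_{0,0}$ by $\Phi^{-1}(\mu)$, and then inserting the vertex $\alpha$ together with the edges to those $p$-regular partitions $\lambda$ of $B_{1,0}$ with $\Ext^1_{B_{1,0}}(D^\lambda,D^\alpha)\neq\{0\}$; by Lemma~\ref{lemma Ext alpha}(b) and the analysis of case~(3) inside the proof of Theorem~\ref{thm Loewy except}, the only such edge is $\alpha$--$\beta$ (recall from Remark~\ref{rem l and r} that in this case $S^\beta\cong D^\alpha$, so $\beta$ is not $p$-regular and does not itself appear as a vertex). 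I would identify the exceptional partitions explicitly on the appropriate abacus—$\bar\alpha=\langle i\rangle$ for the relevant $i$ in the $[2^p]$-display of $B_{0,0}$, which by Lemma~\ref{lemma B0 partial} is $\langle p-1\rangle$ with $\partial$-value $1$—so that deleting $\bar\alpha$ removes exactly the vertex $\langle p-1\rangle$ of $Q_{0,0}(p)$ in row $1$, and then check that the resulting graph, with $\alpha$ re-attached to the image of $\bar\gamma$ (which in case (3) satisfies $\bar\gamma_+=\bar\alpha$, pinning down its position), is isomorphic to $Q_{1,0}(p)$ as drawn in Appendix~\ref{sec quiv}. The $\partial$-values, colours, and lexicographic order are transported correctly because $\Phi$ preserves all of these on good partitions by Lemma~\ref{lemma colour good} and \ref{noth (2:1)}, and the remaining exceptional vertices are placed by Theorem~\ref{thm Loewy except}.

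The main obstacle in both cases is bookkeeping rather than conceptual: for $B_{0,0}$ one must be careful that the alternating black/white ordering in row $0$ prescribed by the definition of $Q_{0,0}(p)$ really matches the ordering obtained from Lemma~\ref{lemma B0 partial} (where the black partitions in row $0$ are $\langle p\rangle$ and $\langle i+1,i\rangle$ with $i$ odd, and the white ones are $\langle 1,1\rangle$—which is $p$-singular—and $\langle i+1,i\rangle$ with $i$ even), and that the edges in family (e) of Corollary~\ref{cor dec matrix}, which behave irregularly for $i\in\{1,p-2,p-1\}$, produce exactly the boundary configuration of $Q_{0,0}(p)$. For $B_{1,0}$ the subtlety is that, because we are in case (3) of Theorem~\ref{thm Loewy except}, several of the modules $\bar Z$, $Z$ vanish and $S^\beta$ collapses to the simple $D^\alpha$, so one has to verify that no spurious edges at $\alpha$ are introduced and that the single edge $\alpha$--$\beta$ indeed lands where $Q_{1,0}(p)$ predicts; this is exactly the content of Lemma~\ref{lemma Ext alpha}(b) applied in case (3), and I would simply invoke it. Once these finite checks are carried out, the identifications of the two quivers with $Q_{0,0}(p)$ and $Q_{1,0}(p)$ are immediate from the definitions in Appendix~\ref{sec quiv}.
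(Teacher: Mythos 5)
Your treatment of $B_{0,0}$ is fine and is exactly the paper's: the quiver is read off the decomposition data of \ref{noth quiver B0}, Corollary~\ref{cor dec matrix} and Lemma~\ref{lemma B0 partial}, and the only content is the comparison with the definition of $Q_{0,0}(p)$. For $B_{1,0}$ you also choose the same route as the paper (the $(2:1)$-pair $(B_{1,0},B_{0,0})$, the partial Scopes equivalence of \ref{noth partial Scopes}, Lemma~\ref{lemma Ext alpha} and case~(3) of Theorem~\ref{thm Loewy except}), but your execution contains a genuine error: you identify $\bar{\alpha}$ with $\langle p-1\rangle$ of $\partial$-value $1$ and perform the surgery in row $1$ of $Q_{0,0}(p)$. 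This is wrong. Displaying $(1)$ on the $[2,3,2^{p-2}]$-abacus and $\emptyset$ on the $[3,2^{p-1}]$-abacus, the runners being swapped are the first two, and one computes $\bar{\alpha}=\langle 2\rangle=(p+1,1^{p-1})$, i.e.\ the partition denoted $\langle 1\rangle$ in the $[2^p]$-notation of Lemma~\ref{lemma B0 partial}; equivalently, Lemma~\ref{lemma l and r} in case (3) forces $\partial(\bar{\alpha})=d+1=p-1$ (consistent with $\alpha=(p+1,2,1^{p-2})$, $\beta=(p+1,1^p)$ from Remark~\ref{rem l and r}). So the vertex that gets deleted is the unique vertex of the bottom row of $Q_{0,0}(p)$, and the modification takes place in the rows of $\partial$-values $p-3,p-2,p-1$; this is precisely why $Q_{1,0}(p)$ has $p-1$ rows rather than $p$. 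Removing a vertex from row $1$, as you propose, leaves a graph with a nonempty row of $\partial$-value $p-1$ and cannot yield $Q_{1,0}(p)$, so the final comparison step of your argument would fail.

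There is a second, related slip: you say the only edge at the new vertex is $\alpha$--$\beta$, while noting yourself that in case (3) $\beta$ is $p$-singular (indeed $S^\beta\cong D^\alpha$); so $\beta$ is not a vertex and no such edge exists, and likewise $\bar{\gamma}$ is $p$-singular in case (3), so ``re-attaching $\alpha$ to the image of $\bar{\gamma}$'' is not meaningful. The correct conclusion from Lemma~\ref{lemma Ext alpha}(b) together with case (3) of Theorem~\ref{thm Loewy except} is that $\alpha$ has a unique neighbour $\hat{\mu}$, where $\mu$ is the unique vertex in row $p-3$ adjacent to $\bar{\beta}$ with $\mu>\bar{\beta}$ (so $\bar{Y}\cong D^{\mu}$ and $Y\cong D^{\hat{\mu}}$); to see that the resulting graph is $Q_{1,0}(p)$ one must also pin down the positions of $\bar{\beta}$ and $\bar{\beta}_+$ in row $p-2$ (both are joined to $\bar{\alpha}$, and $\bar{\gamma}_+=\bar{\alpha}$), which is how the paper proceeds. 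With the exceptional partitions identified correctly the rest of your outline goes through, but as written the identification of the deleted vertex, its $\partial$-value, and the new edge are all incorrect.
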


\begin{proof}
The assertion concerning $B_{0,0}$ has just been proved in \ref{noth quiver B0}.

We now consider the block $B_{1,0}$, which forms a $(2:1)$-pair with $B_{0,0}$, by Proposition~\ref{prop (2:1) pairs hooks}.
We represent all partitions of $B_{1,0}$ on a $[2,3,2^{p-2}]$-abacus, and all partitions of
$B_{0,0}$ on a $[3,2^{p-1}]$-abacus.
As usual, we denote the exceptional partitions of $B_{0,0}$ and $B_{1,0}$ by $\bar{\alpha}$, $\bar{\beta}$, $\bar{\gamma}$ and
$\alpha$, $\beta$, $\gamma$, respectively,
By Corollary~\ref{cor l and r hooks} and Lemma~\ref{lemma alpha max}, 
we know that $\bar{\alpha}$ is the largest partition of $2p$ in $B_{0,0}$ with $\partial$-value $p-1$.
So, by \ref{noth partial Scopes}, it suffices to consider the last three rows
of $Q_{0,0}$, whose vertices have $\partial$-values $p-1,p-2$ and $p-3$. This gives

\begin{center}
\begin{tikzpicture}
\coordinate[label=left:$\bar{\beta}_+$] (B+) at (0.5,-0.5);
\coordinate[label=above:$\mu$] (M) at (1,0);
\coordinate[label=right:$\bar{\beta}$] (B) at (1.5,-0.5);
\coordinate[label=below:$\bar{\alpha}$] (A) at (1,-1);
\draw (B+) node{\color{red} $\bullet$};
\draw[con] (B+) -- (M) node{\color{red} $\bullet$};
\draw (B) node{\color{red} $\bullet$};
\draw[con] (M)  -- (B);
\draw[con] (B+) -- (A) node{\color{red} $\bullet$};
\draw[con] (A)  -- (B);
\draw[con] (0,0) node{$\bullet$} -- (B+);
\draw[con] (B) -- (2,0) node{$\bullet$};
\end{tikzpicture}
\end{center}

We need to identify the red vertices. By Corollary~\ref{cor l and r hooks} (in case (3)), $\bar{\gamma}$ is $p$-singular, while 
$\bar{\beta}$ is both $p$-regular and $p$-restricted. So $\bar{\beta}_+$ exists. Moreover, $\partial(\bar{\beta})=\partial(\bar{\beta}_+)=p-2$.
By Theorem~\ref{thm Loewy except}, $\bar{\beta}$ and
$\bar{\beta}_+$ are connected to $\bar{\alpha}$. This identifies $\bar{\beta}$ and $\bar{\beta}_+$.
Since $\mu>\bar{\beta}$, we must have $[S^{\bar{\beta}}:D^\mu]\neq 0$, by \cite[Theorem 6.1]{ChuangTan2001},
thus $D^\mu\cong \bar{Y}$, in the notation of Theorem~\ref{thm Loewy except}.

Now, by \ref{noth partial Scopes}, Corollary~\ref{cor l and r hooks} and Theorem~\ref{thm Loewy except} again, we 
obtain the following information on the rows of the quiver of $B_{1,0}$ whose vertices have $\partial$-values
$p-2$ and $p-3$:

\begin{center}
\begin{tikzpicture}
\coordinate[label=left:$\alpha_+$] (A+) at (0.5,-0.5);
\coordinate[label=above:$\hat{\mu}$] (M) at (1,0);
\coordinate[label=right:$\gamma$] (G) at (1.5,-0.5);
\coordinate[label=below:$\alpha$] (A) at (1,-0.5);
\draw (A+) node{\color{blue} $\bullet$};
\draw[con] (A+) -- (M) node{\color{blue} $\bullet$};
\draw (G) node{\color{blue} $\bullet$};
\draw[con] (M)  -- (G);
\draw[con] (M) -- (A) node{\color{blue} $\bullet$};
\draw[con] (0,0) node{$\bullet$} -- (A+);
\draw[con] (G) -- (2,0) node{$\bullet$};
\end{tikzpicture}
\end{center}

Here $D^{\hat{\mu}}\cong Y$. By Corollary~\ref{cor l and r hooks}, $\beta$ is $p$-singular. Hence, by Lemma~\ref{lemma Ext alpha} and
Theorem~\ref{thm Loewy except}, $\alpha$ is only connected to $D^{\hat{\mu}}$. Since
$[S^\alpha:D^{\hat{\mu}}]\neq 0$, we have $\hat{\mu}>\alpha$. This proves, that $B_{1,0}$ has quiver $Q_{1,0}(p)$.
\end{proof}


\section{The Quivers}\label{sec quiv}

Suppose that $n\in\NN$ with $n\geq 5$. 
First we construct a quiver $Q_{0,0}(n)$ with $n$ rows, and $n-1$ vertices in the top row that has the following shape:

\begin{equation}\label{eqn (36)}
\begin{tikzcd}[row sep = 0.5em, column sep = 0.5em, ampersand replacement=\&] 
\text{b}\&\phantom{x}\&\text{w}\&\phantom{x}\&\text{b}\& \&  \& \&\text{w}\&\phantom{x}\&\text{b}\&\phantom{x}\&\text{w}\\
\bullet\ar[con]{d}\ar[con]{drr}\&>\&\bullet\ar[con]{d}\ar[con]{drr}\&>\&\bullet\ar[con]{d}\& \cdot \& \cdot \& \cdot \&\bullet\ar[con]{d}\ar[con]{drr}\&>\&\bullet\ar[con]{d}\ar[con]{drr}\&>\&\bullet\ar[con]{d}\\
\bullet      \ar[con]{dr}        \ar[con]{urr}   \&\phantom{x}\&\bullet     \ar[con]{dr}   \ar[con]{urr}       \&\phantom{x}\&\bullet        \& \cdot \& \cdot \& \cdot \&\bullet       \ar[con]{dr}     \ar[con]{urr}       \&\phantom{x}\&\bullet    \ar[con]{dr}                 \ar[con]{urr}     \&\phantom{x}\&\bullet\\
\&\bullet\ar[con]{dr}\ar[con]{ur}\&\&\bullet\ar[dashed]{dr}\ar[con]{ur}\&\cdot\&\cdot\&\cdot\&\cdot\&\cdot\&\bullet\ar[con]{dr}\ar[dashed]{dl}\ar[con]{ur}\&\&\bullet\ar[con]{ur}\&\\
\&\&\bullet\ar[dashed]{dr}\ar[con]{ur}\&\& \phantom{x}\ar[dashed]{dr}\&\cdot\&\cdot\& \cdot\&\phantom{x}\ar[dashed]{dl}\&\&\bullet\ar[dashed]{dl}\ar[con]{ur}\&\&\\
\&\&\&\phantom{x}\ar[dashed]{dr}\&\&\bullet\ar[con]{dr}\&\cdot\&\bullet\ar[con]{dr}\&\&\phantom{x}\ar[dashed]{dl}\&\&\&\\
\&\&\&\&  \bullet\ar[con]{dr}\ar[con]{ur}\& \&\bullet\ar[con]{dr}\ar[con]{ur}\&\&\bullet           \&\&\&\&\\
\&\&\&\&\&\bullet\ar[con]{dr}\ar[con]{ur}\&\&\bullet\ar[con]{ur} \&\&\&\&\&\\
\&\&\&\&\&\&\bullet\ar[con]{ur}\&\&\&\&\&\&
\end{tikzcd}
\end{equation}

The rows are labelled by $0,1,\ldots,n-1$, from top to bottom.
The vertices in the top row are equipped with a colour, which is either white (w) or black (b). Moreover,  we assume that there is 
a total ordering $>$ on the vertices of $Q_{0,0}(n)$ and that, whenever there is an arrow from vertex $x$ to vertex $y$, we have $x>y$.
In addition, the ordering on the vertices in a given row decreases from left to right. Unfortunately, the way our quivers are drawn,
one can, in general, not read off how to compare a vertex in the top row with a vertex in one of the lower rows. For the applications to
the main results of this paper this is irrelevant.

Next we modify the quiver $Q_{0,0}(n)$ to define $Q_{1,0}(n)$ as the following quiver with $n-1$ rows, and $n-1$ vertices in the top row. We call the part of $Q_{1,0}(n)$
consisting of the red and green vertices the \textit{left rim segment} of $Q_{1,0}(n)$, and those part
consisting of the blue vertices the  \textit{right rim segment} of $Q_{1,0}(n)$. Again we have a total ordering on the vertices
of $Q_{1,0}(n)$.

\begin{equation}\label{eqn (37)}
\begin{tikzcd}[row sep = 0.5em, column sep = 0.5em, ampersand replacement=\&] 
\text{b}\&\phantom{x}\&\text{w}\&\phantom{x}\&\text{b}\& \&  \& \&\text{w}\&\phantom{x}\&\text{b}\&\phantom{x}\&\text{w}\\
\color{red}{\bullet}\ar[con]{d}\ar[con]{drr}\&>\&\color{red}{\bullet}\ar[con]{d}\ar[con]{drr}\&>\&\bullet\ar[con]{d}\& \cdot \& \cdot \& \cdot \&\bullet\ar[con]{d}\ar[con]{drr}\&>\&\color{blue}{\bullet}\ar[con]{d}\ar[con]{drr}\ar{dll}\&>\&\color{blue}{\bullet}\ar[con]{d}\\
\color{red}{\bullet}      \ar[con]{dr}      \ar[con]{urr}      \&\phantom{x}\&\color{red}{\bullet}     \ar{dl}\ar[con]{dr}    \ar[con]{urr}      \&\phantom{x}\&\bullet       \& \cdot \& \cdot \& \cdot \&\bullet       \ar[con]{dr}     \ar[con]{urr}       \&\phantom{x}\&\color{blue}{\bullet}    \ar[con]{dr}  \ar[con]{urr}                    \&\phantom{x}\&\color{blue}{\bullet}\\
\&\color{red}{\bullet}\ar[con]{dr}\ar[con]{ur}\&\&\color{red}{\bullet}\ar[con]{ur}\ar[dashed]{dr}\&\cdot\&\cdot\&\cdot\&\cdot\&\cdot\&\color{blue}{\bullet}\ar[con]{dr}\ar[dashed]{dl}\ar[con]{ur}\&\&\color{blue}{\bullet}\ar[con]{ur}\&\\
\&\&\color{red}{\bullet}\ar[dashed]{dr}\ar[con]{ur}\&\& \phantom{x}\ar[dashed]{dr}\&\cdot\&\cdot\& \cdot\&\phantom{x}\ar[dashed]{dl}\&\&\color{blue}{\bullet}\ar[dashed]{dl}\ar[con]{ur}\&\&\\
\&\&\&\phantom{x}\ar[dashed]{dr}\&\&\color{red}{\bullet}\ar[con]{dr}\&\cdot\&\color{blue}{\bullet}\ar[con]{dr}\&\&\phantom{x}\ar[dashed]{dl}\&\&\&\\
\&\&\&\& \color{red}{\bullet}\ar[con]{dr}\ar[con]{ur}\& \&\color{green}{\bullet}\ar[con]{dr}\ar[con]{d}\ar[con]{ur}\&\&\color{blue}{\bullet}           \&\&\&\&\\
\&\&\&\&\&\color{red}{\bullet}\ar[con]{ur}\&\color{green}{\bullet}\&\color{blue}{\bullet}\ar[con]{ur} \&\&\&\&\&\\
\end{tikzcd}
\end{equation}

\medskip

Now, for $(i,j)\in\{1,\ldots,n-1\}\times\{0,\ldots,n-2\}$ with $(i,j)\neq (1,0)$, 
we define the quiver $Q_{i,j}(n)$ that is obtained by modifying the 
left and right rim segments of $Q_{1,0}(n)$ in the following way:

\begin{tabular}{|c||c|}\hline
\multicolumn{2}{|c|}{left rim segment}\\\hline\hline
rows 
$n-2-i,\ldots,n-2$, for $i\leq n-3$&
$$\begin{tikzcd}[row sep = 0.5em, column sep = 0.5em, ampersand replacement=\&] 
\&\phantom{x}\ar[dashed]{dr}\&\&    \& \&\&\&\&\\  
\phantom{x}\ar[dashed]{dr}\&\&\phantom{x} \ar[dashed]{dr}\&    \& \&\&\&\&\\  
\&\bullet\ar[con]{dr}\& \&    \bullet\ar[con]{d}\ar[con]{dr} \& \&\&\&\&\\  
\&\&\bullet\ar[con]{ur}\&\bullet\ar[con]{dr}\&\bullet \ar[dashed]{dr}\&\&\&\&\\
\&\&\&\&\bullet\ar[dashed]{dr}\ar[con]{u}\&\phantom{x}\ar[dashed]{dr}\&\&\&\\
\&\&\&\&\&\phantom{x}\ar[dashed]{dr}\&\bullet\ar[con]{dr}\&\&\\
\&\&\&\&\&\&\bullet\ar[con]{dr}\ar[con]{u}\&\bullet\ar[con]{dr}\&\\
\&\&\&\&\&\&\&\bullet\ar[con]{dr}\ar[con]{u}\&\bullet\\
\&\&\&\&\&\&\&\&\bullet\ar[con]{u}\\
\end{tikzcd}$$\\\hline
rows 
$0,\ldots,n-2$, for $i=n-2$&
$$\begin{tikzcd}[row sep = 0.5em, column sep = 0.5em, ampersand replacement=\&] 
\text{b}\&   \&\text{w}\&\&\&\&\&\\
\bullet\ar[con]{d}\ar[con]{dr}\ar[con]{drr}\&   \&\bullet \ar[con]{d}\ar[con]{dl}\&\&\&\&\&\\
\bullet\ar[con]{urr}\& \bullet \ar[con]{dr}\&    \bullet\ar[con]{dr} \& \&\&\&\&\\  
\&\&\bullet\ar[con]{dr}\ar[con]{u}\&\bullet \ar[dashed]{dr}\&\&\&\&\\
\&\&\&\bullet\ar[dashed]{dr}\ar[con]{u}\&\phantom{x}\ar[dashed]{dr}\&\&\&\\
\&\&\&\&\phantom{x}\ar[dashed]{dr}\&\bullet\ar[con]{dr}\&\&\\
\&\&\&\&\&\bullet\ar[con]{dr}\ar[con]{u}\&\bullet\ar[con]{dr}\&\\
\&\&\&\&\&\&\bullet\ar[con]{dr}\ar[con]{u}\&\bullet\\
\&\&\&\&\&\&\&\bullet\ar[con]{u}\\
\end{tikzcd}$$\\\hline
rows 
$0,\ldots,n-2$, for $i=n-1$&
$$\begin{tikzcd}[row sep = 0.5em, column sep = 0.5em, ampersand replacement=\&] 
\&\text{b}\&\text{w}  \&\text{w}\&\&\&\&\&\\
\&\bullet\ar[con]{dr}\ar[con]{drr}\& \bullet\ar[con]{d} \&\bullet \ar[con]{d}\&\&\&\&\&\\
\&\& \bullet\ar[con]{ur} \ar[con]{dr} \&    \bullet\ar[con]{dr} \& \&\&\&\&\\  
\&\&\&\bullet\ar[con]{dr}\ar[con]{u}\&\bullet \ar[dashed]{dr}\&\&\&\&\\
\&\&\&\&\bullet\ar[con]{u}\ar[dashed]{dr}\&\phantom{x}\ar[dashed]{dr}\&\&\&\\
\&\&\&\&\&\phantom{x}\ar[dashed]{dr}\&\bullet\ar[con]{dr}\&\&\\
\&\&\&\&\&\&\bullet\ar[con]{dr}\ar[con]{u}\&\bullet\ar[con]{dr}\&\\
\&\&\&\&\&\&\&\bullet\ar[con]{dr}\ar[con]{u}\&\bullet\\
\&\&\&\&\&\&\&\&\bullet\ar[con]{u}\\
\end{tikzcd}$$\\\hline

\end{tabular}

\medskip

\begin{tabular}{|c||c|}\hline
\multicolumn{2}{|c|}{right rim segment}\\\hline\hline
rows 
$n-2-j-1,\ldots,n-2$, for $j<n-3$ &
$$\begin{tikzcd}[row sep = 0.5em, column sep = 0.5em, ampersand replacement=\&] 
\&\&\&\&\&\&\& \phantom{x}\ar[dashed]{dl}\&\\
\&\&\&\& \&            \&     \phantom{x} \ar[dashed]{dl} \&                \&  \ar[dashed]{dl}\\
\&\&\&\&\&\bullet  \ar[con]{dr} \ar[con]{d} \&                \&   \bullet   \&\\
\&\&\&\&\bullet \ar[con]{ur}  \ar[con]{d} \ar[dashed]{dl} \&     \bullet  \&  \bullet  \ar[con]{ur}  \&\&\\
\&\&\& \phantom{x} \ar[dashed]{dl}\& \bullet \ar[dashed]{dl}\ar[con]{ur}\&\&\&\&\\
\&\&\bullet \ar[con]{d}\& \phantom{x} \ar[dashed]{dl}\&\&\&\&\&\\
\&\bullet\ar[con]{d}\ar[con]{ur}\&\bullet \&\&\&\&\&\&\\
\phantom{x}\ar[con]{ur}\&\bullet\ar[con]{ur}\&\&\&\&\&\&\&\\
\phantom{x}\ar[con]{ur}\&\&\&\&\&\&\&\&\\
\end{tikzcd}$$\\\hline
rows 
$0,\ldots,n-2$, for $j=n-3$&
$$\begin{tikzcd}[row sep = 0.5em, column sep = 0.5em, ampersand replacement=\&] 
\&\&\&\&\&\text{b} \&             \&   \text{w}  \&\\
\&\&\&\&\&\bullet \ar[con]{d}  \ar[con]{dr} \ar[con]{drr}\&             \&   \bullet \ar[con]{d}\ar[con]{dl}  \&\\
\&\&\&\&\&\bullet \ar[con]{urr} \ar[con]{d} \&         \bullet      \&   \bullet    \&\\
\&\&\&\&\bullet   \ar[con]{d} \ar[dashed]{dl} \ar[con]{ur}\&     \bullet \ar[con]{ur} \&    \&\&\\
\&\&\& \phantom{x} \ar[dashed]{dl}\& \bullet \ar[dashed]{dl}\ar[con]{ur}\&\&\&\&\\
\&\&\bullet \ar[con]{d}\& \phantom{x} \ar[dashed]{dl}\&\&\&\&\&\\
\&\bullet\ar[con]{ur}\ar[con]{d}\&\bullet \&\&\&\&\&\&\\
\phantom{x}\ar[con]{ur}\&\bullet\ar[con]{ur}\&\&\&\&\&\&\&\\
\phantom{x}\ar[con]{ur}\&\&\&\&\&\&\&\&\\
\end{tikzcd}$$\\\hline
rows 
$0,\ldots,n-2$, for $j=n-2$&
$$\begin{tikzcd}[row sep = 0.5em, column sep = 0.5em, ampersand replacement=\&] 
\&\&\&\&\&\text{b} \&       \&   \text{w}\&\text{b}\\
\&\&\&\&\&\bullet\ar[con]{d}\ar[con]{dr} \&       \&   \bullet\ar[con]{dl} \&\bullet\\
\&\&\&\&\&\bullet \ar[con]{urr} \ar[con]{d} \&         \bullet  \ar[con]{urr}   \&      \&\&\\
\&\&\&\&\bullet   \ar[con]{d} \ar[dashed]{dl}\ar[con]{ur} \&     \bullet \ar[con]{ur}  \&  \&  \&\&\\
\&\&\& \phantom{x} \ar[dashed]{dl}\& \bullet \ar[dashed]{dl}\ar[con]{ur}\&\&\&\&\&\\
\&\&\bullet \ar[con]{d}\& \phantom{x} \ar[dashed]{dl}\&\&\&\&\&\&\\
\&\bullet\ar[con]{d}\ar[con]{ur}\&\bullet \&\&\&\&\&\&\&\\
\phantom{x}\ar[con]{ur}\&\bullet\ar[con]{ur}\&\&\&\&\&\&\&\&\\
\phantom{x}\ar[con]{ur}\&\&\&\&\&\&\&\&\&\\
\end{tikzcd}$$\\\hline
\end{tabular}

\bigskip

By Theorem~\ref{thm main1} and Theorem~\ref{thm main1 details}, the above quivers appear as Ext-quivers of
blocks of symmetric groups of weight 2 whose cores are hooks partitions. In Proposition~\ref{prop main2} and Theorem~\ref{thm main2}, we characterize possible
Morita equivalences between different such blocks. Since Morita equivalent blocks have isomorphic Ext-quivers, 
the following result is our key ingredient in the proof of Proposition~\ref{thm main2}.

\pagebreak

\begin{prop}\label{prop graph isos}
Suppose that $(i,j),(i',j')\in \{(0,0)\}\cup (\{1,\ldots,n-1\}\times\{0,\ldots,n-2\})$.
Then $Q_{i,j}(n)$ is isomorphic to $Q_{i,'j'}(n)$, as an undirected graph, if and only if one of the following cases occurs:

\begin{itemize}
\item[{\rm (i)}] $(i,j)=(i',j')$;
\item[{\rm (ii)}] $i=j'+1$ and  $j=i'-1$.
\end{itemize}
\end{prop}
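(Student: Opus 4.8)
The plan is to treat this as a purely combinatorial problem about the explicit finite family of graphs $Q_{i,j}(n)$ defined in this appendix. The ``if'' direction is immediate: if $(i,j)=(i',j')$ there is nothing to prove, and if $i=j'+1$, $j=i'-1$, then the left rim segment of $Q_{i,j}(n)$ (governed by the parameter $i$) has the same shape as the right rim segment of $Q_{i',j'}(n)$ (governed by $j'=i-1$), and symmetrically the right rim segment of $Q_{i,j}(n)$ (governed by $j$) matches the left rim segment of $Q_{i',j'}(n)$ (governed by $i'=j+1$); the ``bulk'' of the two quivers (the brick-wall middle part, which does not depend on $i,j$) is literally identical. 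So reflecting the planar picture in a vertical axis carries $Q_{i,j}(n)$ to $Q_{i',j'}(n)$ as undirected graphs. I would make this precise by exhibiting the reflection as an explicit bijection on vertex sets, checking it preserves adjacency row by row using the three ``left rim segment'' pictures and the three ``right rim segment'' pictures.

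For the ``only if'' direction, the strategy is to read off graph-theoretic invariants of $Q_{i,j}(n)$ that determine the unordered pair of rim-segment types, hence $\{(i,j),(j+1,i-1)\}$, hence (since this operation is an involution on the index set) the pair $(i,j)$ up to the stated symmetry. First I would observe that $n$ itself is recoverable — e.g.\ from the number of rows of the brick-wall bulk, or from the total vertex count — so we may fix $n$ throughout. Next, the key structural feature is that each $Q_{i,j}(n)$ has exactly two ``rim ends'': maximal rim segments hanging off the bulk, one of left type (a zigzag, possibly with the black/white cap when $i\in\{n-2,n-1\}$ or the extra green pendant vertex when, as in $Q_{1,0}$, $i=n-1$ is replaced by the base case) and one of right type. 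Within each rim segment the parameter $i$ (resp.\ $j$) is encoded by where along the segment the ``branch vertex'' sits — the unique vertex in that segment of degree $4$ (the one with the extra edge $\ar[con]{d}$ or $\ar{dl}$, i.e.\ the image of $\bar\alpha$/$\alpha$ and its neighbours in Theorem~\ref{thm Loewy except}). So from $Q_{i,j}(n)$ one extracts the multiset $\{i,j\}$ of ``branch depths'', together with the data of which of the two special cap configurations occurs at depth $n-2$ or $n-1$. I would argue that this multiset, plus the cap data, is a complete undirected-graph invariant: two quivers $Q_{i,j}(n)$ and $Q_{i',j'}(n)$ are isomorphic iff $\{i,j\}=\{i',j'\}$ as indices-with-type, and then a short case analysis (using that $i$ is a ``left'' index and $j$ a ``right'' index, but the two rim-segment shapes for a given depth differ only by orientation/reflection) shows $\{i,j\}=\{i',j'\}$ forces either $(i,j)=(i',j')$ or $(i,j)=(j'+1,i'-1)$.

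The main obstacle I anticipate is bookkeeping the boundary cases cleanly rather than any conceptual difficulty: the left rim segment has genuinely different local pictures for $i\le n-3$, $i=n-2$, $i=n-1$ (and the value $i=1$, $j=0$ is the separately-drawn base quiver $Q_{1,0}(n)$), and likewise on the right for $j<n-3$, $j=n-3$, $j=n-2$; one must check that no accidental isomorphism sneaks in across these cases (for instance, that the black/white colour caps — which are \emph{not} part of the undirected graph — do not need to be invoked, so that the degree sequence and the positions of degree-$3$ and degree-$4$ vertices along each rim already suffice). Concretely I would: (1) define the reflection map and verify it is a graph isomorphism for each of the $3\times 3$ combinations of rim-segment types, establishing (ii)$\Rightarrow$ isomorphic; (2) show the ``bulk'' subgraph is an isomorphism-invariant chunk, so any isomorphism must match up the two rim ends; (3) show a rim end of a given type and depth is rigid — its isomorphism type as a rooted graph (rooted at its attaching vertex) determines the depth and, up to the left/right reflection, the type; (4) conclude that an isomorphism $Q_{i,j}(n)\cong Q_{i',j'}(n)$ induces a bijection between $\{$left-end of depth $i$, right-end of depth $j\}$ and $\{$left-end of depth $i'$, right-end of depth $j'\}$, which by rigidity gives either $(i,j)=(i',j')$ or $(i,j)=(j'+1,i'-1)$; and finally (5) note $(i,j)=(0,0)$ is characterised by having no nontrivial rim ends (it has $n$ rows rather than $n-1$, and is symmetric), so it is isomorphic only to itself, consistent with (i) and (ii) both reducing to $(i,j)=(i',j')=(0,0)$ there. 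This completes the proof.
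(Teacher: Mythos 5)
Your overall strategy coincides in spirit with the paper's: the ``if'' direction is the reflection in the vertical axis (the paper dispatches it in one sentence), and the ``only if'' direction is to be proved by graph invariants that recover the rim data up to that reflection. The paper implements the second step with manifestly invariant objects: the number of vertices of each valency, the \emph{exceptional pairs} (pairs of vertices of valency at most $2$ at distance $2$), and the \emph{boundary} (the full subgraph on the vertices of valency at most $3$), whose relevant connected components are lines, trees of type $D_m$, and three-arm trees with arms of lengths $i-1$, $1$ and $j$; a case division by the number of valency-$1$ and valency-$2$ vertices then pins down $(i,j)$ up to reflection. Your sketch replaces this by a ``bulk versus rim ends'' decomposition plus a rigidity claim for rim ends, which is workable in principle but leaves the crucial point unaddressed: you never say how the bulk and the two rim ends are to be recognised intrinsically, so the assertion in your step (2) that any isomorphism must match rim ends to rim ends is precisely what needs proof. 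Your proposed marker, ``the unique vertex of degree $4$ in the segment'', is also not safe once the attaching (dashed) edges to the bulk are counted; the valency-at-most-$3$ boundary together with the exceptional pairs is the cleaner device, and the paper's case analysis with it is what rules out accidental coincidences among the special cap shapes for $i\in\{n-2,n-1\}$ and $j\in\{n-3,n-2\}$, which you defer as bookkeeping.

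There is moreover a concrete inconsistency in your stated invariant. You claim that the multiset $\{i,j\}$ of branch depths ``with type'' is complete, on the grounds that the two rim-segment shapes ``for a given depth differ only by orientation/reflection''. That principle is off by one: the reflection carries the left rim segment of $Q_{i,j}(n)$ (parameter $i$) to the right rim segment of $Q_{j+1,i-1}(n)$ (parameter $i-1$), so a left segment of parameter $i$ mirrors a right segment of parameter $i-1$ --- this shift is exactly why the criterion is $(i,j)=(j'+1,i'-1)$ rather than $(i,j)=(j',i')$. Taken literally, your test either collapses to case (i) alone (if ``with type'' means the typed pair) or wrongly identifies, say, $Q_{1,2}(n)$ with $Q_{2,1}(n)$: these have equal depth multisets, yet by the proposition they are not isomorphic, since $Q_{2,1}(n)$ is fixed by the reflection whereas $Q_{1,2}(n)\cong Q_{3,0}(n)$. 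The invariant you want is the shifted multiset, e.g.\ $\{i,\,j+1\}$, whose equality for two index pairs is equivalent to (i) or (ii). You do state the correct conclusion at the beginning and end of the sketch, but the mechanism in between has to be repaired accordingly, and the rigidity statement in your step (3) must be proved for the shifted correspondence, not for ``equal depth''.
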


\begin{proof} 
The number of vertices with a given valency is invariant under a graph isomorphism. Moreover, 
we  call a pair $v\neq w$ of vertices of any of the above graphs $Q_{ij}(n)$ {\it exceptional} if $v$ and $w$ have valency at most $2$, and in addition have distance $2$ in $Q_{ij}(n)$. A  graph isomorphism permutes exceptional pairs.
Next we define the {\it boundary}  of $Q_{i,j}(n)$
to be the full subgraph of $Q_{i,j}(n)$ whose vertices are precisely the vertices with valency at most $3$. The boundary is also invariant under graph isomorphisms.

\medskip

We fix $n$ and  write $Q_{i,j}$ instead of $Q_{i,j}(n)$.
We note that $Q_{0,0}$ and $Q_{1,0}$ cannot be isomorphic to any of the other graphs, since they are the only graphs with no 
exceptional pair, or with two exceptional pairs sharing a vertex, respectively.

Note that in case (ii) the (undirected) graphs $Q_{i,j}$ and $Q_{i,'j'}$ are clearly isomorphic,
since $Q_{i,j}$ is then simply obtained by reflecting $Q_{i',j'}$ along the middle axis.
Hence, we may from now on suppose that $2\leq i+j$, and show that $Q_{i,j}$ is not isomorphic
to any graph $Q_{i',j'}$ with $(i,j)\neq (i',j')\neq (i-1,j+1)$. 

\smallskip

We first consider the case that $Q_{i,j}$ has a vertex of valency 1. Then $i=n-1$ or $j=n-2$. 
Since  $Q_{n-1,n-2}$ is the only graph with two vertices of valency 1, it is not isomorphic to any other graph under consideration. Therefore we may suppose that $(i,j)\neq (n-1,n-2)$.
Then $i=n-1$ and $j\in\{0,\ldots,n-3\}$, or $j=n-2$ and $i\in\{1,\ldots,n-2\}$. For $j\in\{0,\ldots,n-3\}$, the graph $Q_{n-1,j}$ is isomorphic to
$Q_{j+1,n-2}$, as mentioned above. 

Thus it suffices to show that if $Q_{n-1,j}$ is  isomorphic to $Q_{n-1, m}$, for
$j,m\in\{0,\ldots,n-3\}$, then $j=m$.
Based on the diagrams,  we describe the graph structure of the boundary of  $Q_{n-1, j}$. 
It has a unique exceptional pair, which  is drawn in the right rim segment  when $j\geq 1$, and when  $j=0$, one vertex is in the left rim segment and the
other is the lowest vertex of the right
rim segment.
The vertices of the exceptional pair belong to two disjoint connected components of the boundary.  
When $j=0$,  one of these components is a line with $n-4$ vertices, and one component is
isomorphic to the Dynkin diagram $D_{n}$. For $1\leq j< n-3$, one component is isomorphic to $D_{n-j}$. The other component is a tree 
 branching at the vertex drawn in the lowest row
in the left rim segment. One arm of this tree has $n-4$ segments, one has just one segment, and the third has $j$ segments.
All other components of the boundary are isolated vertices. If $j=n-3$, then the boundary of $Q_{n-1, j}$ is one connected component.

The boundary of $Q_{n-1, m}$ has a similar structure. 
A graph isomorphism restricts to a graph isomorphism of the boundaries, so that $j=m$.

\smallskip

The graph $Q_{n-2,n-3}$ is the only graph with precisely two vertices of valency $2$, hence cannot be isomorphic to any of the others.

\smallskip
Now we consider the case that
$i=n-2$ and $j\in\{0,\ldots,n-4\}$, or  $j=n-3$ and $i\in\{1,\ldots,n-3\}$. This covers those cases where $Q_{i,j}$ has precisely four vertices of valency $2$.

Let $j\in\{0,\ldots,n-4\}$. Since $Q_{n-2, j}$ is isomorphic to $Q_{j+1, n-3}$, it suffices to show if $Q_{n-2, j}$ is  isomorphic to $Q_{n-2, m}$, for $m\in\{0,\ldots,n-4\}$, then $j=m$.
The graph $Q_{n-2, j}$ has  a unique exceptional pair, which  is drawn in the right rim segment  when $j\geq 1$, and when  
$j=0$, one vertex is in the left rim segment and the
other is the lowest vertex of the right
rim segment.
The vertices of the exceptional pair belong to two disjoint connected components of the boundary.  
When $j=0$,  these components are a line with $n-1$ segments, and one component
isomorphic to the (Dynkin) diagram $D_{n}$. 
For $j\geq 1$, one component is isomorphic to $D_{n-j}$, and the other is a tree 
 branching at the vertex drawn in the lowest row
in the left rim segment. This tree has one arm with $n-1$ segments, one arm with just one segment and one arm with $j$ segments.
All other components of the boundary are isolated vertices. 
The boundary of $Q_{n-2, m}$ has a similar structure. 
A graph isomorphism restricts to a graph isomorphism of the boundaries, so that $j=m$.

\smallskip
Now we consider  the graphs  $Q_{i,j}$ with $i\in\{1,\ldots, n-3\}$ and $j\in\{0,\ldots,n-4\}$. These have precisely six vertices of valency 2.
Since $Q_{ij}$ is isomorphic to $Q_{i-1,j+1}$, for  $i\in \{1,\ldots, n-3\}$ and $j\in \{0,\ldots, n-4\}$,
it suffices to show
that if $Q_{i,j}$ is  isomorphic to $Q_{i,m}$, for  $i\in \{1,\ldots, n-3\}$ and $j,m\in \{0,\ldots, n-4\}$, then $j = m$.

Suppose first that $i>1$. The graph $Q_{i,j}$ has  precisely two exceptional pairs. One is of then is drawn in the left rim segment and the other is drawn in the right rim segment, for $j\geq 1$.
For 
$j=0$, one vertex is in the left rim segment and the
other is the lowest vertex of the right
rim segment.
The vertices of the exceptional pair belong to {\it three}  disjoint connected components of the boundary.  
When $j=0$, they are a line with $i-1$ segments, then a component isomorphic to $D_{n-(i-1)}$ and a component isomorphic to $D_n$.

When $j\geq 1$, the boundary of $Q_{i,j}$  has a component that is a   tree branching at the vertex drawn in the lowest row in the left rim segment, whose arms have
$i-1$ segments,  one segment
and $j$ segments respectively. The other components of the boundary containing vertices from exceptional pairs are  
 isomorphic to $D_{n-(i-1)}$, and to $D_n$.
The boundary of $Q_{i,m}$ has a similar structure.  As before, if there is a graph isomorphism between $Q_{i,j}$ and
$Q_{i,m}$, then it restricts to an isomorphism of the boundaries, and we deduce
$j=m$.

It remains to consider the graphs $Q_{1, j}$, with $j\in\{1,\ldots,n-4\}$. Each of these has two exceptional pairs: one of them is drawn between the lowest two vertices 
in the left rim segment, and the other in the right rim segment. They belong to three disjoint connected components of the boundary: one is  isomorphic to $D_n$, 
one is a line with $j$ segments, and the third is a component isomorphic to $D_{n-j}$. Hence if $m,j\in\{1,\ldots,n-4\}$ and if $Q_{1, j}$ is isomorphic to
$Q_{1,m}$, then $j=m$.
\end{proof}

\end{appendix}

\medskip

\noindent (Susanne Danz) \  {\sc FB Mathematik, Katholische Universit\"{a}t Eichst\"{a}tt-Ingolstadt, 
85072 Eichst\"{a}tt, Germany}\\
{\it Email} \ susanne.danz@ku.de

\medskip

\noindent (Karin Erdmann) \ {\sc Mathematical Institute, University of Oxford, OX2 6GG, UK}\\
{\it Email} \ erdmann@maths.ox.ac.uk


\begin{thebibliography}{00}


\bibitem{BO1998} C. Bessenrodt, J.B. Olsson, On residue symbols and the Mullineux conjecture. J. Algebraic Combin. {\bf 7} (1998), 227--251.

\bibitem{ChuangTan2001} J. Chuang, K.M. Tan, On certain blocks of Schur algebras.
Bull. London Math. Soc. \textbf{33} (2001), 157--167. 

\bibitem{DanzErdmann2012} S. Danz, K. Erdmann, Specht modules in the Auslander--Reiten quiver. J. Algebra {\bf 397} (2014), 343--364.

\bibitem{Donkin} S. Donkin, On Schur algebras and related algebras II. J. Algebra {\bf 111}(1987), 354-364.

\bibitem{Erdmann1990} K. Erdmann, Blocks of tame representation type and related algebras. Lecture Notes in Mathematics, 
1428. Springer-Verlag, Berlin, 1990.



\bibitem{Green} J. A. Green, Polynomial representations of $\mathrm{GL}_n$, Lecture Notes in Mathematics {\bf 830}, Springer, 1980.

\bibitem{HN2004} D.~Hemmer, D.~Nakano, Specht filtrations for Hecke algebras of type $A$, J. London Math. Soc. (2) \textbf{69} (2004), no. 3, 623--638.


\bibitem{James1978} G. D. James, The representation theory of the symmetric groups, Lecture Notes in Mathematics
{\bf 682}, Springer, 1978.

\bibitem{JK1981} G. D. James, A. Kerber, The representation theory of the symmetric group, Encyclopedia
Math. Appl. 16, Addison-Wesley, 1981.



\bibitem{Martin1989} S. Martin, On the ordinary quiver of the principal block of certain symmetric groups. Quart. J. Math. Oxford Ser. (2) \textbf{40} (1989),
209--223.

\bibitem{Martin1990}  S. Martin, Ordinary quivers for symmetric groups. II. Quart. J. Math. Oxford Ser. (2) \textbf{41} (1990),  79--92.


\bibitem{NT} H. Nagao, Y. Tsushima, Representations of finite groups, Academic Press, 1989.


\bibitem{Peel} M.~H.~Peel, Hook representations of the symmetric groups, Glasgow Math.~J. {\bf 12} (1971), 136--149.

\bibitem{Richards1996} M. Richards, Some decomposition numbers for Hecke algebras of general linear groups.
Math. Proc. Cambridge Philos. Soc. \textbf{119} (1996), 383--402. 

\bibitem{Robinson} G. de B. Robinson, Representation theory of the symmetric group, Mathematical Expositions 12, University of Toronto Press, 1961.



\bibitem{Sambale2018} B. Sambale, Morita equivalent blocks of symmetric groups. SIGMA Symmetry Integrability Geom. Methods Appl. \textbf{14} (2018), Paper No. 100, 8 pp.

\bibitem{Scopes1991} J. Scopes, Cartan matrices and Morita equivalence for blocks of the symmetric groups. J. Algebra \textbf{142} (1991),  441--455.

\bibitem{Scopes1995} J. Scopes, Symmetric group blocks of defect two.
Quart. J. Math. Oxford Ser. (2) \textbf{46} (1995), 201--234.



\end{thebibliography}
\end{document}